\crefname{equation}{}{}
\crefname{subsection}{Subsection}{Subsections}
\crefname{example}{Example}{Examples}
\crefname{theorem}{Theorem}{Theorems}
\crefname{lemma}{Lemma}{Lemmas}
\crefname{proposition}{Proposition}{Propositions}
\crefname{thm}{Theorem}{Theorems}
\crefname{lem}{Lemma}{Lemmas}
\crefname{prop}{Proposition}{Propositions}
\crefname{figure}{Figure}{Figures}
\crefname{fig}{Figure}{Figures}
\crefname{remark}{Remark}{Remarks}
\crefname{rem}{Remark}{Remarks}
\crefname{cor}{Corollary}{Corollaries}
\crefname{corollary}{Corollary}{Corollaries}
\crefname{conjecture}{Conjecture}{Conjectures}
\crefname{conj}{Conjecture}{Conjectures}
\crefname{ex}{Example}{Examples}
\tikzset{
  long mapsto/.style={
    -{Straight Barb[angle'=60, scale=1.5]},
    shorten <=2pt,
    shorten >=2pt,
  }
}
\title
\author[W.-S. Jung]{Woo-Seok Jung}
\address{Department of Mathematics, University of Seoul, Seoul 02504, Republic of Korea}
\email{jungws@uos.ac.kr}
\author[Y.-T. Oh]{Young-Tak Oh}
\address{Department of Mathematics/Institute for Mathematical and Data Sciences, Sogang University, Seoul 04107, Republic of Korea}
\email{ytoh@sogang.ac.kr}
\thanks{
% All authors were supported by the National Research Foundation of Korea (NRF) Grant funded by the Korean Government (NRF-2020R1F1A1A01071055).
}
\keywords{quantum Schur algebra, $0$-Schur algebra, $0$-Hecke algebra, degenerate quantum group}
\subjclass[2020]{20G43, 20C08, 20G42, 		20C25}
\date{\today}
\newtheorem{theorem}{Theorem}[section]
\newtheorem{proposition}[theorem]{Proposition}
\newtheorem{lemma}[theorem]{Lemma}
\newtheorem{corollary}[theorem]{Corollary}
\newtheorem*{claim*}{Claim}
\theoremstyle{definition}
\newtheorem{example}[theorem]{Example}
\newtheorem{definition}[theorem]{Definition}
\newtheorem{remark}[theorem]{Remark}
\numberwithin{equation}{section} \numberwithin{figure}{section}
\numberwithin{table}{section}
\def\C{\mathbb C}
\newcommand{\nc}{\newcommand}
\nc{\wt}{\mathsf{wt}}
\nc{\sw}{\mathsf{sw}}
\nc{\sDes}{\mathrm{sDes}}
\nc{\Bre}{\mathrm{Bre}}
\nc{\SG}{\mathfrak{S}}
\nc{\frakR}{\mathfrak{R}}
\nc{\frakL}{\mathfrak{L}}
\nc{\PCT}{\mathrm{PCT}}
\nc{\SPCT}{\mathrm{SPCT}}
\nc{\RT}{\mathrm{RT}}
\nc{\SRT}{\mathrm{SRT}}
\nc{\RCT}{\mathrm{RCT}}
\nc{\SRCT}{\mathrm{SRCT}}
\nc{\SYRT}{\mathrm{SYRT}}
\nc{\SYCT}{\mathrm{SYCT}}
\nc{\SPYCT}{\mathrm{SPYCT}}
\nc{\tst}{\mathtt{st}}
\nc{\Span}{\mathrm{span}}
\nc{\comp}{\mathrm{comp}}
\nc{\rmst}{\mathrm{st}}
\nc{\std}{\mathsf{std}}
\nc{\Des}{\mathrm{Des}}
\nc{\set}{\mathrm{set}}
\nc{\cf}{\textsf{cf}}
\nc{\scf}{\textsf{scf}}
\nc{\ch}{\mathrm{ch}}
\nc{\cl}{\mathrm{cl}}
\nc{\id}{\mathrm{id}}
\nc{\sh}{\mathrm{sh}}
\nc{\Cop}{\mathrm{Cop}}
\nc{\bfS}{\mathbf{S}}
\nc{\bfm}{\mathbf{m}}
\nc{\hbfS}{\widehat{\mathbf{S}}}
\nc{\bfF}{\mathbf{F}}
\nc{\calB}{\mathcal{B}}
\nc{\calS}{\mathcal{S}}
\nc{\hcalS}{\widehat{\mathcal{S}}}
\nc{\alphamax}{\alpha_{\rm max}}
\nc{\brho}{\overline{\rho}}
\nc{\bphi}{\overline{\phi}}
\nc{\calV}{\mathcal{V}}
\nc{\calR}{\mathcal{R}}
\nc{\sfR}{\mathsf{R}}
\nc{\calG}{\mathcal{G}}
\nc{\tal}{\lambda(\alpha)}
\nc{\tbe}{\widetilde{\beta}}
\nc{\opi}{\overline{\pi}}
\nc{\calP}{\mathcal{P}}
\nc{\rmtop}{\mathrm{top}}
\nc{\rad}{\mathrm{rad}}
\nc{\bfP}{\mathbf{P}}
\nc{\SET}{\mathrm{SET}}
\nc{\SIT}{\mathrm{SIT}}
\nc{\rev}{\mathrm{r}}
\nc{\Th}{\theta}
\nc{\mPhi}{\Phi}
\nc{\mphi}{\phi}
\nc{\mPsi}{\Psi}
\nc{\hmPsi}{\widehat{\Psi}}
\nc{\mpsi}{\psi}
\nc{\mGam}{\Gamma}
\nc{\tcd}{\mathtt{cd}}
\nc{\trd}{\mathtt{rd}}
\nc{\trcd}{\mathtt{rcd}}
\nc{\rmr}{\mathrm{r}}
\nc{\rmc}{\mathrm{c}}
\nc{\rmt}{\mathrm{t}}
\nc{\bubact}{\,\scalebox{0.6}{$\bullet$}\,}
\nc{\hbubact}{\,\scalebox{0.6}{$\widehat{\bullet}$}\,}
\nc{\col}{\mathsf{col}}
\nc{\row}{\mathsf{row}}
\nc{\calE}{\mathcal{E}}
\nc{\calT}{\mathscr{T}}
\nc{\sfT}{\mathsf{T}}
\nc{\calEsa}{\mathcal{E}^\sigma(\alpha)}
\nc{\tauC}{\tau_{\scalebox{0.5}{$C$}}}
\nc{\sytabC}{\sytab_{\scalebox{0.5}{$C$}}}
\nc{\bbfP}{\overline{\bfP}}
\nc{\pr}{\mathbf{pr}}
\nc{\Ups}{\Upsilon}
\nc{\pact}{\diamond}
\nc{\tauE}{\tau_{\scalebox{0.5}{$E$}}}
\nc{\tauF}{\tau_{\scalebox{0.5}{$F$}}}
\nc{\tauG}{\tau_{\scalebox{0.5}{$G$}}}
\nc{\rtE}{T_{\scalebox{0.5}{$E$}}}
\nc{\rtF}{T_{\scalebox{0.5}{$F$}}}
\nc{\rtG}{T_{\scalebox{0.5}{$G$}}}
\nc{\oPaE}{\overline{\Phi}_{\alpha_E}}
\nc{\oPaF}{\overline{\Phi}_{\alpha_F}}
\nc{\oPaG}{\overline{\Phi}_{\alpha_G}}
\nc{\tab}{\tau}
\nc{\sytab}{\widehat{\tau}}
\nc{\hatE}{\widehat{E}}
\nc{\hati}{\hat{i}}
\nc{\hcalE}{\widehat{\calE}}
\nc{\hatC}{\widehat{C}}
\nc{\bal}{{\boldsymbol{\upalpha}}}
\nc{\bbe}{{\boldsymbol{\upbeta}}}
\nc{\bgam}{{\boldsymbol{\upgamma}}}
\nc{\bdel}{{\boldsymbol{\updelta}}}
\nc{\weakcon}{\odot}
\nc{\basisI}{I}
\nc{\ldalpha}{\lambda(\alpha)}
\nc{\SRIT}{\mathrm{SRIT}}
\nc{\re}{\mathrm{rev}}
\nc{\otau}{\overline{\tau}}
\nc{\rtop}{{\rm top}}
\nc{\sfc}{\mathsf{c}}
\nc{\sfr}{\mathsf{r}}
\nc{\tH}{\mathtt{H}}
\nc{\tV}{\mathtt{V}}
\nc{\rpi}{\mathring{\pi}}
\nc{\cpi}{\check{\pi}}
\nc{\frakm}{\mathfrak{m}}
\nc{\fke}{\mathfrak{e}}
\nc{\Hom}{\mathrm{Hom}}
\nc{\module}{\mathrm{mod} \, }
\nc{\SPCTsa}{\SPCT^\sigma(\alpha)}
\nc{\bfSsa}{\bfS_\alpha^\sigma}
\nc{\bfSsaC}{{\bfS}^\sigma_{\alpha,C}}
\nc{\hbfSsa}{\widehat{\bfS}_\alpha^\sigma}
\nc{\upineq}{\rotatebox{90}{$<$}}
\nc{\downineq}{\rotatebox{270}{$<$}}
\nc{\diagineq}{\rotatebox{135}{$<$}}
\nc{\frakB}{\mathfrak{B}}
\nc{\hxi}{\widehat{\xi}}
\nc{\hxidwJ}{\hxi_{\scalebox{0.55}{$J$}}}
\nc{\hxiupJ}{\hxi^{\scalebox{0.55}{$J$}}}
\nc{\scrS}{\mathscr{S}}
\nc{\bfT}{\mathbf{T}}
\nc{\ra}{\rightarrow}
\nc{\matr}[2]{\left( \hspace{-1ex} \begin{array}{c} #1 \\ #2 \end{array} \hspace{-1ex} \right)}
\definecolor{wsgreen}{rgb}{0,0.5,0}
\nc{\DIRT}{\mathrm{DIRT}}
\nc{\hpi}{\pi}
\nc{\frakI}{\mathfrak{I}}
\nc{\hfrakI}{\widehat{\mathfrak{I}}}
\nc{\orho}{\overline{\rho}}
\nc{\autotheta}{\uptheta}
\nc{\autophi}{\upphi}
\nc{\autochi}{\upchi}
\nc{\autoomega}{\upomega}
\nc{\hIM}{\widehat{\frakB}}
\nc{\bfpi}{\boldsymbol{\uppi}}
\nc{\bfopi}{\overline{\boldsymbol{\uppi}}}
\nc{\ofrakB}{\overline{\frakB}}
\nc{\rmw}{\mathrm{w}}
\nc{\ostar}{\;\overline{*}\;}
\nc{\rank}{\mathrm{rank}}
\nc{\fkp}{\mathfrak{p}}
\nc{\bfR}{\mathbf{R}}
\nc{\ro}{\mathsf{row}}
\nc{\co}{\mathsf{col}}
\nc{\cb}{\mathsf{CB}}
\nc{\End}{\text{End}}
\nc{\upsig}{{\boldsymbol{\upsigma}}}
\nc{\bfSsaE}{{\bfS}^\upsig_{\alpha,E}}
\nc{\hfkp}{\widehat{\mathfrak{p}}}
\nc{\hautophi}{{\widehat{\autophi}}}
\nc{\hautotheta}{{\widehat{\autotheta}}}
\nc{\hautoomega}{{\widehat{\autoomega}}}
\nc{\rmperm}{\mathrm{perm}}
\nc{\bfsigJ}{\boldsymbol{\sigma}_{\scalebox{0.55}{$J$}}}
\nc{\bfrhoJ}{\boldsymbol{\rho}^{\scalebox{0.55}{$J$}}}
\nc{\pistar}[1]{\pi_{#1}^*}
\nc{\wfkp}{\widetilde{\mathfrak{p}}}
\nc{\bfpsi}{\boldsymbol{\uppsi}}
\nc{\yt}[1]{\todo[size=\tiny,color=blue!10]{#1 \\ \hfill --- Young-Tak}}
\nc{\YT}[1]{\todo[size=\tiny,inline,color=blue!10]{#1
		\\ \hfill --- Young-Tak}}
\nc{\ws}[1]{\todo[size=\tiny,color=green!10]{#1 \\ \hfill ---  Woo-Seok}}
\nc{\WS}[1]{\todo[size=\tiny,inline,color=green!10]{#1
		\\ \hfill --- Woo-Seok}}
\definecolor{purple}{rgb}{0.44, 0.0, 1.0}
\newenvironment{red}{\relax\color{red}}{\hspace*{.5ex}\relax}
\newenvironment{blue}{\relax\color{blue}}{\hspace*{.5ex}\relax}
\newenvironment{green}{\relax\color{wsgreen}}{\hspace*{.5ex}\relax}
\newenvironment{magenta}{\relax\color{magenta}}{\hspace*{.5ex}\relax}
\newenvironment{purple}{\relax\color{purple}}{\hspace*{.5ex}\relax}
\nc{\ber}{\begin{red}}
\nc{\er}{\end{red}}
\nc{\beb}{\begin{blue}}
\nc{\eb}{\end{blue}}
\nc{\bema}{\begin{magenta}}
\nc{\ema}{\end{magenta}}
\nc{\begr}{\begin{green}}
\nc{\egr}{\end{green}}
\nc{\bepu}{\begin{purple}}
\nc{\epu}{\end{purple}}
\nc{\lb}{\pmb{\left[\vphantom{\frac{1}{2}}\right.}}
\nc{\rb}{\pmb{\left.\vphantom{\frac{1}{2}}\right]}}
\nc{\slb}{\pmb{[\vphantom{\frac{1}{2}}}}
\nc{\srb}{\pmb{\vphantom{\frac{1}{2}}]}}
\nc{\tran}{\mathsf{tran}}
\nc{\e}{\mathsf{e}}
\nc{\Int}{\mathsf{Int}}
\begin{document}

\maketitle

\begin{abstract}
Jensen, Su, and Yang described the projective indecomposable modules of the $0$-Schur algebra $\mathbf{S}_0(n,r)$ using its geometric realization. 
In this paper, the simple modules of $\mathbf{S}_0(n,r)$ are identified by computing the tops of the projective indecomposable modules.
Furthermore, functorial relations among the module categories
$\mathbf{H}_r(0)$\textsf{-mod},
$\mathbf{S}_0(n,r)$\textsf{-mod}, and
$U_0(\mathfrak{gl}_n)$\textsf{-mod}
are examined, where $\mathbf{H}_r(0)$ denotes the $0$-Hecke algebra and $U_0(\mathfrak{gl}_n)$ denotes the degenerate quantum group.
\end{abstract}

\tableofcontents

\section{Introduction}
Let $n$ and $r$ be nonnegative integers, and let $q$ be an indeterminate.
In 1989, Dipper and James~\cite{DJ89} introduced the quantum Schur algebra $S_q(n,r)$ as the endomorphism algebra 
$$
S_q(n,r)=\operatorname{End}_{H_r(q)} \!\left( \bigoplus_{\lambda \in \Lambda(n,r)} x_{\lambda} H_r(q) \right),
$$
where $H_r(q)$ is the Hecke algebra of type $A$ over $\mathbb{Z}[q]$ (see \cref{0-Schur algebra definition}). 
It plays a central role in the representation theory of Hecke algebras and quantum groups, being closely connected with Schur–Weyl duality.
The specialization of $S_q(n,r)$ at $q=0$ gives rise to the $0$-Schur algebra $S_0(n,r)$.

Beilinson, Lusztig, and MacPherson~\cite{BLM90} constructed certain finite-dimensional quotients of the quantized enveloping algebra $U_q(\mathfrak{gl}_n)$ using double flag varieties.
Du~\cite{Du95} subsequently proved that these quotients are isomorphic to the $q$-Schur algebras $S_q(n,\cdot)$, thereby providing a geometric realization of $S_q(n,r)$.
In parallel, Jensen and Su~\cite{JS15} established a geometric realization of the $0$-Schur algebra $S_0(n,r)$.

Let $\mathbf{S}_0(n, r) := S_0(n, r) \otimes_{\mathbb{Z}[q]} \mathbb{C}$ and  $\mathbf{H}_0(r) := H_0(r) \otimes_{\mathbb{Z}[q]} \mathbb{C}$.
Jensen, Su, and Yang~\cite{JSY16} employed the realization in~\cite{JS15} to construct its projective indecomposable $\mathbf{S}_0(n, r)$- modules.
More precisely, they provided a complete set $\{ P_{\lambda} \mid \lambda \in \Lambda^{\bullet}(n,r) \}$ of representatives of the isomorphism classes of projective indecomposable  $\mathbf{S}_0(n, r)$-modules, where $\Lambda^{\bullet}(n,r)$ is the set of weak compositions \( (\lambda_1, \lambda_2, \dots, \lambda_n)\) such that if \( \lambda_i = 0 \) for some \( i \), then \( \lambda_j = 0 \) for all \( j > i \).

In \cref{main result 1}, we construct simple $\mathbf{S}_0(n,r)$-modules $\{S_{\lambda} \mid \lambda \in \Lambda^{\bullet}(n,r)\}$ by determining the tops of the projective indecomposable modules (\cref{thm: Slambda is simple}).
We present a basis of $S_{\lambda}$ parametrized by {\em column block diagonal matrices}, and we show that this basis has several notable properties (\cref{lem: for main thm}).
As an immediate consequence of \cref{thm: Slambda is simple}, we obtain an explicit description of the radicals of projective indecomposable $\mathbf{S}_0(n,r)$-modules (\cref{Describing radical }).

In \cref{main result: 2}, we investigate functorial relations among the module categories $U_0(\mathfrak{gl}_n)\textsf{-mod}$, $\mathbf{S}_0(n,r)\textsf{-mod}$, and $\mathbf{H}_r(0)\textsf{-mod}$. Here, $U_0(\mathfrak{gl}_n)$ denotes the degenerate analogue of the quantum group $U_q(\mathfrak{gl}_n)$ introduced by Krob and Thibon~\cite{KT5}.
Letting $V_0 := \mathbb{C}^n$, the tensor space ${V_0}^{\otimes r}$ carries the structure of a $(U_0(\mathfrak{gl}_n), \mathbf{H}_r(0))$-bimodule, which defines the functor
\begin{equation*}
F_{n,r} : \mathbf{H}_r(0)\textsf{-mod} \longrightarrow U_0(\mathfrak{gl}_n)\textsf{-mod},
\quad M \mapsto {V_0}^{\otimes r} \otimes_{\mathbf{H}_r(0)} M.
\end{equation*}
By virtue of the isomorphism
$\mathbf{S}_0(n,r)\cong \operatorname{End}_{\mathbf{H}_r(0)}\!\left({V_0}^{\otimes r}\right)$,
we also define the functor
$$
G_{n,r} : \mathbf{H}_r(0)\textsf{-mod} \longrightarrow \mathbf{S}_0(n,r)\textsf{-mod}, 
\quad M \mapsto {V_0}^{\otimes r} \otimes_{\mathbf{H}_r(0)} M.
$$
Moreover, because the image of the representation of $U_0(\mathfrak{gl}_n)$ in $\operatorname{End}_{\mathbb{C}}({V_0}^{\otimes r})$ is contained in the subalgebra $\operatorname{End}_{\mathbf{H}_r(0)}({V_0}^{\otimes r})$, we obtain the functor
$$
\Psi_{n,r}: \mathbf{S}_0(n,r)\textsf{-mod} \longrightarrow U_0(\mathfrak{gl}_n)\textsf{-mod}.$$
These functors satisfy the relation $\Psi_{n,r} \circ G_{n,r} \cong  F_{n,r}$.

In~\cite{KT5}, Krob and Thibon studied the images of simple and projective $\mathbf{H}_r(0)$-modules under the functor $F_{n,r}$.
Here, we investigate the images of simple and projective modules under the functors $G_{n,r}$ and $\Psi_{n,r}$.
To begin, we recall the simple and projective indecomposable $\mathbf{H}_r(0)$-modules, which are naturally indexed by the strong compositions of $r$.
For each strong composition $\alpha$ of $r$, we denote by $F_\alpha$ and $R_\alpha$ the simple and projective indecomposable $\mathbf{H}_r(0)$-modules corresponding to $\alpha$
(for details, see \cref{subsec: 0-Hecke alg and QSym}).
As a first step, for each $\lambda \in \Lambda^{\bullet}(n,r)$, 
we establish a $U_0(\mathfrak{gl}_n)$-module isomorphism 
\[
\Psi_{n,r}(S_{\lambda}) \cong {V_0}^{\otimes r} \otimes_{\mathbf{H}_r(0)} F_{\lambda^{+}}
\]
(\cref{thm: 0-Schur functor degenerate quantum group}).
Building on this result, for each strong composition of $r$ with $\ell(\alpha)\le n$, we obtain $\mathbf{S}_0(n,r)$-module isomorphisms 
\[
G_{n,r}(F_{\alpha}) \cong S_{\alpha^{\bullet}}
\quad \text{ and }\quad 
G_{n,r}(R_{\alpha}) \cong P_{\alpha^{\bullet}},
\]
where $\alpha^{\bullet} \in \Lambda^{\bullet}(n, r)$ is the composition obtained by appending zeros to $\alpha$ so that its length becomes $n$ 
(\cref{cor: 0-Schur functor 0-Hecke}).
By combining the second isomorphism with the relation  $\Psi_{n,r} \circ G_{n,r} \cong  F_{n,r}$, we deduce that, for each $\lambda \in \Lambda^{\bullet}(n,r)$, 
there exists a $U_0(\mathfrak{gl}_n)$-module isomorphism 
\[ \Psi_{n,r}(P_{\lambda}) \cong {V_0}^{\otimes r} \otimes_{\mathbf{H}_r(0)} R_{\lambda^{+}},
\]
where $\lambda^+$ is the strong composition obtained by removing all zero entries from $\lambda$
(\cref{thm: 0-Schur functor degenerate quantum group PIM}).
We close this section by establishing the existence of a functor
\[
\xi_{n,r}: \mathcal C \to \mathbf{S}_0(n,r)\text{-\textsf{mod}}
\]
satisfying \(\Psi_{n,r}\circ \xi_{n,r}= \mathrm{id}\),
where \(\mathcal C\) denotes the full subcategory of \(U_0(\mathfrak{gl}_n)\text{-\textsf{mod}}\) consisting of polynomial \(U_0(\mathfrak{gl}_n)\)-modules of degree \(r\) (\cref{polynomial modules are Schur modules}).

In \cref{main result: 3}, we discuss the symmetries of the Cartan matrix of ${\mathbf H}_0(n,r)$ and its relation to the Cartan matrix of ${\mathbf S}_0(n,r)$.
By applying (anti)involution twists, we first show that the Cartan matrix of ${\mathbf H}_0(n,r)$ is constant on each $D_4 \times \mathbb{Z}_2$-orbit
(\cref{symmetry of Catran matrix of 0 Hecke algebra}).
Next, we prove that for $0 \le n \le r$, the Cartan matrix of ${\mathbf S}_0(n,r)$
can be obtained from that of ${\mathbf H}_r(0)$ by restricting the index set from all strong compositions of $r$ to $\Lambda^{\bullet}(n,r)$
(\cref{symmetries of 0 Schur cartan}).

\section{Preliminaries}\label{Section: Preliminaries}
Throughout this section, let \( r \) be a nonnegative integer.

\subsection{Weak and strong compositions}
A \emph{weak composition} of $r$ is a finite sequence of nonnegative integers $\lambda = (\lambda_1, \lambda_2, \ldots, \lambda_n)$ such that
$\sum_{i=1}^{n} \lambda_i = r$.
Each $\lambda_i$ is called a \emph{part} of $\lambda$. The integer $\ell(\lambda) := n$ is referred to as the \emph{length} of $\lambda$, and $|\lambda| := r$ as the \emph{size} of $\lambda$. The \emph{empty composition} $\varnothing$ is defined to be the unique weak composition of size and length zero. Let $\Lambda(n, r)$ denote the set of weak compositions of $r$ of length $n$.

A weak composition $\lambda = (\lambda_1, \lambda_2, \ldots, \lambda_n)$ of $r$ is called a \emph{strong composition} of $r$ if each part $\lambda_i$ is a positive integer. 
Let $\Lambda^+(n,r)$ denote the set of strong compositions of $r$ of length $n$ and set 
$$\Lambda^+(r):=\bigsqcup_{0\le n \le r} \Lambda^+(n,r).$$

Given $\lambda = (\lambda_1, \lambda_2, \ldots, \lambda_n) \in \Lambda^+(r)$ and $I = \{i_1 < i_2 < \cdots < i_k\} \subset [r-1]:= \{1, 2, \ldots, r-1\}$, define
$$
\set(\lambda) := \left\{\lambda_1,\ \lambda_1 + \lambda_2,\ \ldots,\ \lambda_1 + \lambda_2 + \cdots + \lambda_{n-1} \right\} \subset [r-1],
$$
and
$$
\comp(I) := (i_1,\ i_2 - i_1,\ i_3 - i_2,\ \ldots,\ r - i_k).
$$
Then the set of strong compositions of $r$ is in bijection with the power set of $[r-1] $ via the correspondence $\lambda \mapsto \set(\lambda)$ (or $I \mapsto \comp(I)$).

Let $\Lambda^{\bullet}(n, r) \subset \Lambda(n, r)$ denote the set of compositions $(\lambda_1, \lambda_2, \dots, \lambda_n)$ such that $\lambda_i = 0$ implies $\lambda_j = 0$ for all $j > i$.
For $\lambda \in \Lambda^{\bullet}(n, r)$, let $\lambda^+$ denote the strong composition obtained by removing all zero entries from $\lambda$.
For example, $(3,1,2,0,0,0)^+ = (3,1,2)$.
Conversely, for $\alpha \in \Lambda^+(r)$ with $\ell(\alpha) \le n$, let $\alpha^{\bullet} \in \Lambda^{\bullet}(n, r)$ be the composition obtained by appending zeros to $\alpha$ so that its length becomes $n$.
If $\ell(\alpha) > n$, we set $\alpha^{\bullet} = \varnothing$.
Then there are natural bijections
\begin{equation}\label{relation between weak and strong compositions}
\begin{aligned} 
& \Lambda^{\bullet}(n,r) \xrightarrow{\ \sim\ } \bigsqcup_{0 \le \ell \le n} \Lambda^+(\ell, r), \quad \lambda \mapsto \lambda^+,\\
& \bigsqcup_{0 \le \ell \le n} \Lambda^+(\ell, r) \xrightarrow{\ \sim\ } \Lambda^{\bullet}(n,r), \quad \alpha \mapsto \alpha^{\bullet}.
\end{aligned}
\end{equation}

Let $\alpha = (\alpha_1, \alpha_2, \ldots, \alpha_n) \in \Lambda^+(r)$. Define the \emph{reverse composition} $\alpha^{\mathrm{r}}$ of $\alpha$ by
$$
\alpha^{\mathrm{r}} := (\alpha_n, \alpha_{n-1}, \ldots, \alpha_1).
$$
The \emph{complement} $\alpha^{\mathrm{c}}$ of $\alpha$ is the unique composition of $r$ satisfying
$$
\set(\alpha^{\mathrm{c}}) = [r-1] \setminus \set(\alpha).
$$
The \emph{conjugate composition} $\alpha^{\mathrm{t}}$ of $\alpha$ is defined by
$$
\alpha^{\mathrm{t}} := (\alpha^{\mathrm{r}})^{\mathrm{c}} = (\alpha^{\mathrm{c}})^{\mathrm{r}}.
$$

\subsection{The $0$-Schur algebra $S_0(n,r)$}\label{0-Schur algebra definition}
Let $\SG_r$ be the symmetric group on $\{1,2, \ldots, r\}$
and $q$ be an indeterminate.
The Hecke algebra $H_r(q)$ associated with $\SG_r$ is the $\mathbb{Z}[q]$-algebra with unity generated by $T_i$ ($i=1,2, \ldots, r-1$) subject to the following defining relations:
\begin{align*}
    \begin{cases}
        T^2_i = (q-1) T_i + q, & \text{for } 1 \le i \le r-1, \\
        T_i T_j = T_j T_i, & \text{for } |i-j| > 1, \\
        T_i T_{i+1} T_i = T_{i+1} T_i T_{i+1}, & \text{for } 1 \le i < r-1.
    \end{cases}
\end{align*}
Let \( s_i \) denote the simple transposition \( (i \,\, i+1) \) for \( 1 \leq i \leq r-1 \). 
For $w \in \SG_r$, choose a reduced expression $w = s_{i_1} \cdots s_{i_t}$ and define $T_w := T_{i_1} \cdots T_{i_t}$.
It is well known that $T_w$ is independent of the choice of reduced expressions. Moreover, \( H_r(q) \) is a free \( \mathbb{Z}[q] \)-algebra with basis \( \{ T_w \mid w \in \SG_r \} \).

For a weak composition \( \lambda = (\lambda_1, \lambda_2, \ldots, \lambda_n) \in \Lambda(n,r) \), the corresponding \emph{Young subgroup} \( \SG_{\lambda} \) of \( \SG_r \) is defined by     
\[
\SG_{\lambda_1} \times \SG_{\lambda_2} \times \cdots \times \SG_{\lambda_n}.
\]
Define   
\[
x_{\lambda} := \sum_{w \in \SG_{\lambda}} T_w \in H_r(q).
\]
Following Dipper and James \cite{DJ89}, the \( q \)-Schur algebra \( S_q(n,r) \) is defined as the endomorphism algebra over $\mathbb{Z}[q]$
\[
S_q(n,r) := \operatorname{End}_{H_r(q)} \left( \bigoplus_{\lambda \in \Lambda(n,r)} x_{\lambda} H_r(q) \right).
\]
The \( 0 \)-Schur algebra \( S_0(n,r)\) arises as a degeneration of \( S_q(n,r) \) at \( q=0 \), given by  
\[
S_0(n,r) = S_q(n,r) \otimes_{\mathbb{Z}[q]} \mathbb{Z}
\cong S_q(n,r) \otimes_{\mathbb{Z}[q]} \mathbb{Z}[q]/(q).
\]  
This algebra has been studied in various works, including 
\cite{D98q-Schur, DY12, JS15, JSY16, JSY20}.

\subsection{A $\mathbb Z$-basis and a presentation for \( S_0(n,r) \)}
This subsection introduces a basis for $S_0(n, r)$, describes the multiplication rules among its elements, and presents a defining presentation of the algebra.

Beilinson, Lusztig, and MacPherson \cite{BLM90} provided a geometric construction of certain finite-dimensional quotients of the quantized enveloping algebra \( U_q(\mathfrak{gl}_n) \) using double flag varieties.  
Later, Du \cite{Du95} observed that these quotients are isomorphic to the \( q \)-Schur algebras \( S_q(n, \cdot) \), thereby giving a geometric realization of \( S_q(n,r) \).

In a similar manner, Jensen and Su \cite{JS15} provided a geometric realization of the \( 0 \)-Schur algebra \( S_0(n,r) \). Specifically, they constructed a \(\mathbb{Z}\)-algebra that is isomorphic to \( S_0(n,r) \), with a basis given by the set \( \mathcal{F} \times \mathcal{F}  / \mathrm{GL}(V) \) of \(\mathrm{GL}(V)\)-orbits in the double flag variety \( \mathcal{F} \times \mathcal{F} \). Here, \( V \) is an \( r \)-dimensional vector space over \( \mathbb{C} \), and \( \mathcal{F} \) is the variety of \( n \)-step flags in \( V \) over \( \mathbb{C} \).  
Since there exists a natural one-to-one correspondence between \( \mathcal{F} \times \mathcal{F} / \mathrm{GL}(V) \) and the set \( M_n(r) \) of \( n \times n \) matrices with nonnegative integer entries whose total sum of entries is \( r \), it follows that \( S_0(n, r) \) admits a basis indexed by elements of \( M_n(r) \), given by  
\begin{equation}\label{standard basis}
   \{ e_A \mid A \in M_n(r) \}. 
\end{equation}
The precise definition of \( e_A \) can be found in \cite[Section 1]{JS15}. In the present paper, this basis is referred to as the \emph{standard basis} for $S_0(n, r)$.

Next, we describe the multiplication rule for the elements of the standard basis.
For $A \in M_n(r)$, set
\begin{equation*}
\begin{aligned}
    \ro(A) &:= \left(\sum_{1\le j \le n} a_{1,j},\sum_{1\le j \le n} a_{2,j},\ldots,  \sum_{1\le j \le n} a_{n,j} \right) \\ 
    \co(A) &:= \left( \sum_{1\le j \le n} a_{i,1},\sum_{1\le j \le n} a_{i,2}, \ldots, \sum_{1\le j \le n} a_{i,n} \right).
\end{aligned}    
\end{equation*}
In other words, $\ro(A)$ and $\co(A)$ represent the row-sum vector and column-sum vector of the matrix $A$, respectively. 
It was shown in \cite[Corollary 7.2.2]{JS15} that the standard basis 
satisfies the following multiplication rule: for $A, B \in M_n(r)$, 
\begin{equation}\label{multiplication rule}
    e_A  e_B = 
    \begin{cases}
    e_C, & \text{if } \co(A) = \ro(B), \\
    0, & \text{otherwise},
    \end{cases}
 \end{equation}  
for a uniquely determined matrix \( C \in M_n(r) \).  
In some special cases, $C$ admits a simple description.
For each \( \lambda = (\lambda_1, \lambda_2, \ldots, \lambda_n) \in \Lambda(n, r) \), we write \( k_{\lambda} \) for \( e_{D_\lambda} \), where \( D_\lambda \) is the diagonal matrix  
$D_\lambda = \operatorname{diag}(\lambda_1, \lambda_2, \ldots, \lambda_n)$.  
By \cite[Eq. 4]{JSY20}, we have    
\begin{equation} 
    \label{eq: k_lambda act on e_A}
\begin{aligned}
    k_{\lambda}  e_A = 
    \begin{cases}
    e_A, &\text{if } \lambda = \ro(A)\\
    0, &\text{otherwise}
    \end{cases} \quad \text{ and } \quad 
   e_A  k_{\lambda} = 
    \begin{cases}
    e_A, &\text{if } \lambda = \co(A)\\
    0, &\text{otherwise.}
    \end{cases}
\end{aligned}
\end{equation}
It follows from \cref{eq: k_lambda act on e_A} that $$\sum_{\lambda \in \Lambda(n,r)} k_{\lambda}$$ is the identity element of $S_0(n,r)$.
For $1 \le i \le n-1$ and $\lambda \in \Lambda(n,r)$, set  
\begin{align*}
e_{i,\lambda}:=e_{D_{\lambda} - E_{i+1,i+1} + E_{i, i+1}} \quad \text{ and } \quad 
f_{i,\lambda}:=e_{D_{\lambda} - E_{i,i} + E_{i+1, i}},
\end{align*}
where $E_{a, b}$ denotes the $n\times n$ matrix with $1$ in the $(a,b)$-position and $0$s elsewhere for $1 \le a, b \le n$.
We then set 
\begin{align*}
e_i := \sum_{\lambda \in \Lambda(n,r)} e_{i, \lambda} \quad \text{ and } \quad 
f_i := \sum_{\lambda \in \Lambda(n,r)} f_{i, \lambda}.
\end{align*}
In view of \cref{multiplication rule}, for $A \in M_n(r)$, one has that
$$e_i  e_A = e_{i, \ro(A)} e_A \quad \text{ and } \quad f_i  e_A = f_{i, \ro(A)} e_A.$$ 
The following lemma presents the multiplication rule for $e_i e_A$ and $f_i e_A$.

\begin{lemma} {\rm (\cite[Lemma 6.11]{JS15})} \label{lem: multiplication rule}
% Let $e_A \in S_0(n,r)$ with $\ro(A) = \lambda$.
Let $A \in M_n(r)$ with $\ro(A)=(\lambda_1, \lambda_2,\ldots, \lambda_n)$.
\begin{enumerate}[label = {\rm (\arabic*)}]
    \item If $\lambda_{i+1}>0$, then 
    $$e_i e_A = e_{A + E_{i,p} - E_{i+1,p}},$$ 
    where $p = \max\{ 1\le j \le n-1 \mid a_{i+1,j}>0 \}$.
    \item If $\lambda_{i}>0$, then 
    $$f_i e_A = e_{A - E_{i,q} + E_{i+1,q}},$$ 
    where $q = \min\{1\le j \le n-1 \mid a_{i,j}>0 \}$.
\end{enumerate}
\end{lemma}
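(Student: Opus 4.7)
The plan is to reduce the product $e_i e_A$ to a single product $e_{i,\ro(A)} e_A$ and then extract the resulting matrix by specializing the BLM-style multiplication formula for $S_q(n,r)$ at $q=0$, or equivalently by a direct orbit computation in the flag-variety model of \cite{JS15}.

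First, the identity $e_i e_A = e_{i,\ro(A)} e_A$ is already recorded immediately before the lemma; it follows from \cref{eq: k_lambda act on e_A} together with the fact that $\co(D_\lambda - E_{i+1,i+1} + E_{i,i+1}) = \lambda$. Setting $\lambda = \ro(A)$, the multiplication rule \cref{multiplication rule} yields a unique $C \in M_n(r)$ with $e_{i,\lambda} e_A = e_C$, and the forced values are $\ro(C) = (\lambda_1,\ldots,\lambda_i+1,\lambda_{i+1}-1,\ldots,\lambda_n)$ and $\co(C) = \co(A)$. A straightforward check shows that for every column index $p$ with $a_{i+1,p}>0$, the matrix $A + E_{i,p} - E_{i+1,p}$ has exactly these row and column sums, so the substance of the lemma is to pin down the specific $p$.

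Second, to identify $p$ I would lift to $S_q(n,r)$ and use the BLM formula
\[
e_i \cdot e_A \;=\; \sum_{p\,:\,a_{i+1,p}>0} q^{\beta(p)}\,[a_{i,p}+1]_q\; e_{A + E_{i,p} - E_{i+1,p}},
\]
where the exponent $\beta(p) = \sum_{j>p} a_{i+1,j}$ arises from the standard normalization of the orbit basis (see \cite[\S3]{BLM90} and \cite{Du95}). Since $[a_{i,p}+1]_q \equiv 1 \pmod{q}$, specialization at $q=0$ kills every summand with $\beta(p)>0$ and keeps precisely the unique term with $\beta(p)=0$, namely $p = \max\{j : a_{i+1,j}>0\}$. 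This yields (1). The proof of (2) is parallel: the analogous multiplication formula for $f_i e_A$ carries exponent $\sum_{j<q} a_{i,j}$, which vanishes at $q=0$ exactly when $q = \min\{j : a_{i,j}>0\}$.

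The main obstacle is conventions. One must verify that the normalization of the orbit basis used by Jensen--Su in \cite{JS15} after specialization agrees with the BLM normalization for which the above exponent $\beta(p)$ is valid; a reverse-order or sign convention would flip \emph{max} and \emph{min}. An alternative that bypasses this bookkeeping is to work intrinsically in the double-flag model of \cite{JS15}: write $e_{i,\lambda} e_A$ as a convolution of $\mathrm{GL}(V)$-orbits on the triple-flag variety, analyze the generic fibre, and observe that in the $q=0$ limit the only surviving orbit is the one obtained by modifying the rightmost (respectively, leftmost) nonzero column. Either route yields the stated formulas.
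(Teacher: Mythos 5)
You are proving a statement that this paper does not prove at all: the lemma is imported verbatim from \cite[Lemma 6.11]{JS15}, so there is no internal argument to compare against, and your attempt has to be judged against the cited source. Your skeleton is sound and matches how such degenerate rules are obtained in the literature. The reduction $e_i e_A = e_{i,\ro(A)} e_A$ is correctly justified from \cref{eq: k_lambda act on e_A}, the row and column sums of the product are indeed forced by \cref{multiplication rule}, and your endgame is right: with exponent $\beta(p)=\sum_{j>p}a_{i+1,j}$, the two conditions $\beta(p)=0$ and $a_{i+1,p}>0$ select $p=\max\{j \mid a_{i+1,j}>0\}$ uniquely, the surviving coefficient is $[a_{i,p}+1]_q\equiv 1 \pmod q$, and the mirror exponent $\sum_{j<q}a_{i,j}$ handles $f_i$. (Your reading of $p$ as a maximum over all $j\le n$ is the correct one; the bound $j\le n-1$ in the paper's statement is evidently a typo.)

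The soft spot is exactly the one you flag, and it is more than bookkeeping: the exponent $\beta(p)=\sum_{j>p}a_{i+1,j}$ is \emph{not} what the unnormalized BLM or Dipper--James formulas give, and with the naive basis the $q=0$ specialization does not even produce a monomial. Concretely, take $n=r=2$ and $A=E_{2,1}+E_{2,2}$, and let $e_A$ act as the double-coset operator $\bar x_{\co(A)}h\mapsto \bigl(\sum_{x\in \SG_{\ro(A)}d_A\SG_{\co(A)}}\bar\pi_x\bigr)h$ transported through \cref{isomorphism phi 0}, with $\bar\pi_i^2=-\bar\pi_i$ as in \cref{eq: Hecke right action on V^r q=0}. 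Then $e_A(\bar\xi_1\otimes\bar\xi_2)=\bar\xi_2\otimes\bar\xi_2$ and $e_1(\bar\xi_2\otimes\bar\xi_2)=\bar\xi_1\otimes\bar\xi_2+\bar\xi_2\otimes\bar\xi_1$, so the composite is $k_{(1,1)}+e_{E_{1,2}+E_{2,1}}$ rather than the single term $e_{E_{1,2}+E_{2,1}}$ the lemma asserts. So the monomial answer, and hence your $\beta(p)$, holds only after the basis is rescaled by the orbit-dimension powers of $v$ --- precisely the normalization Jensen--Su build into $e_A$ --- and verifying that rescaling is the actual mathematical content of the proof, not a peripheral convention check that can be deferred. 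Your fallback (convolution of $\mathrm{GL}(V)$-orbits on the triple-flag variety, identifying the unique dense orbit, which modifies the rightmost nonzero column of row $i+1$, resp.\ the leftmost of row $i$) is in substance how Jensen--Su argue and is the honest way to close this; as written, though, your central generic formula is asserted from memory rather than established, so the proposal is a correct strategy with its key input left unverified.
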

It is worth noting that for \( A, B \in M_n(r) \), the explicit form of \( C \) in \cref{multiplication rule} can be determined by combining \cref{lem: multiplication rule} with \cite[Lemma 2.7]{JSY20}.  

Finally, we introduce the presentation of $S_0(n,r)$ as given in \cite{DY12, JS15}.
Set 
\begin{align*}
   \quad \epsilon_i &:=(0,\ldots,0,\stackrel{i{\rm th}}{1}, 0,0,\ldots,0)\in \mathbb{Z}^n, && (1\le i \le n),\\
   \quad a_i &:= \epsilon_i-\epsilon_{i+1},&&(1\le i \le n-1),\\
   \quad p_{i,j} &:= 
    \begin{cases}
        2a_i + a_j & \text{if } i=j \pm 1, \\
        a_i + a_j & \text{otherwise}
    \end{cases} && (1\le i,j \le n-1).
\end{align*}
With this notation, the desired presentation for 
$S_0(n,r)$ is given as follows.

\begin{theorem}{\rm (\cite{DY12, JS15})}
As a $\mathbb{Z}$-algebra, $S_0(n, r)$ is generated by $e_i$, $f_i$ ($1 \le i \le n-1$) and $k_{\lambda}$ ($\lambda \in \Lambda(n,r)$) with the following defining relations: for $1\le i,j \le n-1$ and $\lambda \in \Lambda(n,r)$,
\begin{align*}
   & k_{\lambda+p_{i,j}} P_{i,j} k_{\lambda}=0, \\
   & k_{\lambda-p_{i,j}} N_{i,j} k_{\lambda}=0, \\
   &  k_{\lambda + a_i - a_j} C_{i,j} k_{\lambda}=0.
\end{align*}
Here,  
\begin{align*}
  &  P_{i,j} = 
    \begin{cases}
        e_i^2 e_j - e_i e_j e_i & \text{for } i=j-1, \\
        - e_i e_j e_i + e_j e_i^2 & \text{for } i=j+1, \\
        e_i e_j - e_j e_i & \text{otherwise},
    \end{cases}\\
 &   N_{i,j} = 
    \begin{cases}
        - f_i f_j f_i + f_j f_i^2 & \text{for } i=j-1, \\
        f_i^2 f_j - f_i f_j f_i & \text{for } i=j+1, \\
        f_i f_j - f_j f_i & \text{otherwise},
    \end{cases}
\\
  &  C_{i,j} = e_i f_j - f_j e_i -\delta_{i,j} \left( \sum_{\mu: \mu_{i+1} =0} k_{\mu} - \sum_{\mu: \mu_{i} =0} k_{\mu}  \right).
\end{align*}
\end{theorem}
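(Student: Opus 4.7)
The plan is to prove the theorem in two stages: first verify that the generators $e_i, f_i, k_\lambda$ satisfy the listed relations, which produces a surjection from the abstractly presented algebra onto $S_0(n,r)$; then establish injectivity of this surjection via a spanning and rank argument against the standard basis.

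For the first stage, each relation of the form $k_{\lambda+p_{i,j}} R \, k_\lambda = 0$ is localized by \cref{eq: k_lambda act on e_A} to the weight space with $\co=\lambda$ and $\ro=\lambda+p_{i,j}$. I would therefore fix a standard basis element $e_A$ with these row- and column-sums and verify the identities $P_{i,j} e_A = 0$, $N_{i,j} e_A = 0$, and $C_{i,j} e_A = 0$ termwise. \cref{lem: multiplication rule} explicitly describes how $e_i$ and $f_i$ modify $A$ (shifting a single matrix entry down or up a column, at the maximal or minimal admissible column index), so each summand in $P_{i,j}$, $N_{i,j}$, $C_{i,j}$ produces an explicit matrix in $M_n(r)$, and the required cancellations reduce to combinatorial identities on these column indices. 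In particular, the quantum Serre identities at $q=0$ collapse because repeated applications of $e_i$ (respectively $f_i$) reuse the same extremal column index, identifying the paired monomials.

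The Cartan relation $C_{i,i}$ is the most delicate case, since here the correction involves the idempotents $k_\mu$ indexed by weights with $\mu_i=0$ or $\mu_{i+1}=0$. I would track the failures of $e_i f_i e_A$ and $f_i e_i e_A$ to be well-defined under \cref{lem: multiplication rule}: when $\lambda_{i+1}=0$ the application of $e_i$ returns zero, and when $\lambda_i=0$ the application of $f_i$ returns zero; matching these boundary cases against $e_A$ produces exactly the indicator sum $\sum_{\mu_{i+1}=0} k_\mu - \sum_{\mu_i=0} k_\mu$. For the second stage, I would show that the subalgebra generated by $e_i, f_i, k_\lambda$ contains every standard basis element $e_A$. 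Since $k_\lambda = e_{D_\lambda}$, it suffices to reach $e_A$ from $e_{D_{\ro(A)}}$ by successive applications of the $e_i$ and $f_i$. Inducting on the off-diagonal mass $\sum_{i\neq j}|i-j|\, a_{i,j}$ and invoking \cref{lem: multiplication rule}, one can transport the entries of $D_{\ro(A)}$ into the positions prescribed by $A$, one column at a time. Comparing $\mathbb{Z}$-ranks of the presented algebra and $S_0(n,r)$ then upgrades the surjection to an isomorphism.

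The main obstacle I expect is precisely the Cartan relation $C_{i,i}$: at generic $q$ this commutator equals $(k_i - k_i^{-1})/(q - q^{-1})$, but at $q=0$ that expression is ill-defined, so the corrective $k_\mu$-terms must be recovered from a careful boundary analysis of \cref{lem: multiplication rule}. Managing this without a proliferation of weight-case splits, and ensuring the two indicator sums line up with the correct signs, is the technical heart of the verification.
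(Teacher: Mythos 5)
The paper does not actually prove this theorem; it is imported verbatim from \cite{DY12, JS15}, so your proposal must be measured against the strategy of those references rather than an internal argument. Your first stage is sound and matches the literature: localizing each relation to a single weight space via \cref{eq: k_lambda act on e_A} and verifying $P_{i,j}e_A$, $N_{i,j}e_A$, $C_{i,j}e_A$ termwise through \cref{lem: multiplication rule} is exactly how one checks that $S_0(n,r)$ is a quotient of the presented algebra, and your boundary analysis of the Cartan relation $C_{i,i}$ (the failures of $e_i$ and $f_i$ when $\lambda_{i+1}=0$ resp.\ $\lambda_i=0$ producing the two indicator sums of idempotents) correctly identifies where the degenerate correction terms come from. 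Your generation argument --- reaching every $e_A$ from $k_{\ro(A)}$ by monomials in the $e_i,f_i$, by induction on off-diagonal mass --- is likewise a genuine statement proved in \cite{JS15} and gives surjectivity of the comparison map.

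The genuine gap is your final step. ``Comparing $\mathbb{Z}$-ranks of the presented algebra and $S_0(n,r)$'' presupposes that you know the rank of the abstractly presented algebra, and nothing in your proposal bounds it from above. What you have established is only that the presented algebra surjects onto a free $\mathbb{Z}$-module of rank $|M_n(r)|$, hence has rank \emph{at least} $|M_n(r)|$; to upgrade the surjection to an isomorphism you must show the presented algebra is \emph{spanned} by $|M_n(r)|$ elements, i.e., you need a straightening or normal-form argument showing that every word in $e_i$, $f_i$, $k_\lambda$ reduces, using only the three families of listed relations, to a $\mathbb{Z}$-combination of a prescribed family of monomials in bijection with $M_n(r)$. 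This completeness-of-relations step is the technical heart of any presentation theorem --- it is what \cite{DY12} actually supplies, by transferring a presentation of the generic $q$-Schur algebra and controlling the specialization at $q=0$ --- and it is entirely absent from your plan; without it the rank comparison is circular. (Once such a spanning bound is in hand, the conclusion does follow cleanly over $\mathbb{Z}$: a module spanned by $N$ elements surjecting onto a free module of rank $N$ forces the surjection to be an isomorphism, so no separate torsion-freeness check is needed --- but the spanning bound itself cannot be skipped.) A smaller caveat: your claim that the $q=0$ Serre identities ``collapse because repeated applications reuse the same extremal column index'' is too quick, since applying $e_j$ can change which column index is extremal for the subsequent $e_i$; the verification goes through but requires a genuine case analysis rather than the one-line identification of paired monomials you suggest.
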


\section{Simple modules of the $0$-Schur algebra $\mathbf{S}_0(n, r)$}
\label{main result 1}

Let $r$ and $n$ be nonnegative integers.
Hereafter, all modules and algebras are defined over $\mathbb{C}$, and we consider the $\mathbb{C}$-algebra
$$
\mathbf{S}_0(n, r) := S_0(n, r) \otimes_{\mathbb{Z}} \mathbb{C}.
$$
Given an \( \mathbf{S}_0(n, r)\)-module \( M \), we denote its radical by \( \operatorname{rad}(M) \) and define the top of \( M \) as  
\[
\operatorname{top}(M) := M / \operatorname{rad}(M).
\]
There is a canonical bijection between the set of isomorphism classes of projective indecomposable $\mathbf{S}_0(n, r)$-modules and that of simple $\mathbf{S}_0(n, r)$-modules,
given by
$$
P \mapsto \operatorname{top}(P)$$
(for example, see \cite{15TO} or  \cite[Section 1]{95ARS}).

The purpose of this section is to provide a direct construction of simple modules for $\mathbf{S}_0(n, r)$.
As a consequence, we obtain an explicit description of the radicals of projective indecomposable $\mathbf{S}_0(n, r)$-modules.
We begin by recalling the construction of projective indecomposable $\mathbf{S}_0(n, r)$-modules as given in \cite{JSY16}.

\subsection{Projective indecomposable $\mathbf{S}_0(n, r)$-modules}

Let
$$
\lambda = (\lambda_1, \lambda_2, \ldots, \lambda_n) \in \Lambda(n, r), \quad  
\alpha = (\alpha_1, \alpha_2, \ldots, \alpha_s) \in \Lambda^+(n).
$$
Then $\lambda$ can be decomposed into subsequences as follows:
$$
(\lambda_1, \ldots, \lambda_{\alpha_1}),\ 
(\lambda_{\alpha_1+1}, \ldots, \lambda_{\alpha_1+\alpha_2}),\ 
\ldots,\ 
(\lambda_{\sum_{1 \le i \le s-1} \alpha_i + 1}, \ldots, \lambda_n).
$$
If each subsequence in this decomposition is either equal to $(0)$, or has neither its first nor last entry equal to zero, then $\alpha$ is said to be \emph{maximal with respect to $\lambda$}.
Let $\max(\lambda)$ denote the set of such strong compositions.
For example, if $\lambda = (3,0,2,0,1) \in \Lambda(5,6)$, then
$$
\max(\lambda) = \{ (5),\ (3,1,1),\ (1,1,3),\ (1,1,1,1,1) \}.
$$
This set admits the following simple description.
For $\beta = (\beta_1, \beta_2, \ldots, \beta_k) \in \Lambda^+(l)$ with $l \le n$, let $\beta \cdot 1^{n - l}$ denote the concatenation of $\beta$ with $1^{n - l}$, that is,
$$
\beta \cdot 1^{n - l} = (\beta_1, \beta_2, \ldots, \beta_k, \underbrace{1, \ldots, 1}_{n - l\text{ times}}).
$$
Then
\begin{equation}\label{simple description for max(lambda)}
\max(\lambda) = \{ \beta \cdot 1^{n - \ell(\lambda^+)} \mid \beta \in \Lambda^+(\ell(\lambda^+)) \}.
\end{equation}

Similarly, we can decompose an \( n \times n \) matrix \( A \) into \( s \) \( n \times n \) matrices as follows:
$$A(\alpha;1),\,  A(\alpha;2),\, \ldots \,, \,A(\alpha;s),$$
where, for $1\le v\le s$,  
\begin{align*}
A(\alpha;v)^{(i)}=
\begin{cases}
A^{(i)}& \text{ if } \sum_{t=1}^{v-1}\alpha_t+1 \le i \le \sum_{t=1}^{v}\alpha_t,\\
0 & \text{otherwise.}
\end{cases}  
\end{align*}
Here, \( B^{(i)} \) denotes the \( i \)th column of \( B \) for any square matrix \( B \).
An $n \times n$ matrix $A$ is called \emph{open} if every $2 \times 2$ submatrix of $A$ has at least one zero on its diagonal. Additionally, $A$ is called \emph{open on columns} with respect to $\alpha$ if $A(\alpha; v)$ is an open matrix for all $v = 1, \ldots, s$.

Let  
    \begin{equation*}
    \mathcal{B}^{\lambda,\alpha} :=\{ e_A \mid \text{$\mathrm{co}(A)=\lambda$ and $A$ is open on columns with respect to $\alpha$}\} 
    \end{equation*}
    and let 
    \begin{equation*}
     B^{\lambda} := \left\{e_A \mid \co(A) = \lambda \right\} \,\,\setminus \,\,\bigcup_{ \alpha \in \rm{max}(\lambda) \setminus \{(1,1,\ldots,1) \} } B^{\lambda, \alpha}.
    \end{equation*}
    From \cref{simple description for max(lambda)} it follows that 
     \begin{equation}\label{basis for Pims}
        B^{\lambda} 
        = \{e_A \mid \co(A) = \lambda \} \,\,\setminus \,\,\bigcup_{ \alpha \in \{ \beta \cdot 1^{n-\ell(\lambda^+)}  | \beta \in \Lambda^+(\ell(\lambda^+))  \} \setminus \{1^n\} } B^{\lambda, \alpha}.
    \end{equation}
Following \cite{JSY16}, we define an $\mathbf{S}_0(n, r)$-action on the $\mathbb{C}$-span of $B^{\lambda}$ as follows: 
for $A \in B^{\lambda}$ and $b \in \{e_i, f_i, k_{\lambda} \}$, 
     \begin{equation}\label{action of ei and fi}
        b \cdot e_A := 
        \begin{cases}
            e_B    &   \text{ if $b \cdot e_A = e_B$ in $S_0(n,r)$ and $e_B \in B^{\lambda}$}, \\ 
            0    &    \text{ otherwise.}
        \end{cases}
    \end{equation}
    This indeed defines an $\mathbf{S}_0(n, r)$-module, denoted by $P_{\lambda}$. 
    % Furthermore, the following result is known.
   
    \begin{theorem} {\rm (\cite[Theorem 5.1]{JSY16})}
For each $\lambda \in \Lambda(n, r)$, the module $P_\lambda$ is a projective indecomposable $\mathbf{S}_0(n, r)$-module.
    \end{theorem}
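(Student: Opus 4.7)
The plan is to realize $P_\lambda$ as a quotient of the principal right ideal $\mathbf{S}_0(n,r)\,k_\lambda$, which is projective since $k_\lambda$ is an idempotent of $\mathbf{S}_0(n,r)$ by \cref{eq: k_lambda act on e_A}, and which admits $\{e_A : \co(A) = \lambda\}$ as a $\mathbb{C}$-basis. Let
\[
N_\lambda := \mathrm{span}_{\mathbb{C}}\!\left(\,\{e_A : \co(A)=\lambda\}\setminus B^\lambda\,\right)
= \sum_{\alpha \in \max(\lambda)\setminus\{(1,\ldots,1)\}} \mathrm{span}_{\mathbb{C}} B^{\lambda,\alpha}.
\]
The first task is to verify that $N_\lambda$ is a left $\mathbf{S}_0(n,r)$-submodule of $\mathbf{S}_0(n,r)\,k_\lambda$, for then the formula in \cref{action of ei and fi} coincides with the natural quotient action on $\mathbf{S}_0(n,r)\,k_\lambda / N_\lambda$, and this quotient is canonically isomorphic to $P_\lambda$.

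To establish this invariance, I would analyze the left action of each generator on a basis vector $e_A \in B^{\lambda,\alpha}$ using \cref{lem: multiplication rule}. The elements $k_\mu$ act diagonally, while $e_i$ and $f_i$ each modify $A$ only within a single column by shifting mass between rows $i$ and $i+1$; in particular, $\co(A)=\lambda$ is preserved automatically. What remains is to show that whenever $e_i\cdot e_A$ or $f_i\cdot e_A$ yields a nonzero basis vector $e_B$, the matrix $B$ is again open on columns with respect to \emph{some} $\alpha'\in\max(\lambda)\setminus\{(1,\ldots,1)\}$. Since the $\alpha'$-block structure is dictated by the zero-pattern of $\lambda$ via \cref{simple description for max(lambda)}, and since the operators affect at most one column, a case analysis according to whether the affected column lies inside a block of size $\ge 2$ or a singleton block, combined with a direct check that the diagonal-zero condition on the relevant $2\times 2$ submatrices persists, should close this step.

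For projectivity, I would exhibit an idempotent $\varepsilon_\lambda \in k_\lambda\,\mathbf{S}_0(n,r)\,k_\lambda$ with $\mathbf{S}_0(n,r)\,\varepsilon_\lambda \cong P_\lambda$, thereby realizing $P_\lambda$ as a direct summand of the projective module $\mathbf{S}_0(n,r)\,k_\lambda$. A natural candidate arises by decomposing $k_\lambda$ as a sum of orthogonal idempotents indexed by the strata of $\{e_A : \co(A)=\lambda\}$ refined by the coarsest $\alpha\in\max(\lambda)$ with respect to which $A$ is open on columns; $P_\lambda$ should then correspond to the stratum $\alpha=(1,\ldots,1)$. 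For indecomposability, I would take $v_\lambda := e_{D_\lambda} + N_\lambda$ as a distinguished cyclic generator and show that $\End_{\mathbf{S}_0(n,r)}(P_\lambda)$ is local, by verifying that every endomorphism is determined by its value on $v_\lambda$ and that any endomorphism not acting as a nonzero scalar on $v_\lambda$ sends $v_\lambda$ into a sum of non-diagonal basis vectors and is therefore nilpotent.

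I expect the main obstacle to lie in the first step. The condition of being \emph{open on columns with respect to $\alpha$} couples the $\alpha$-block structure with the zero-pattern inside each block, while $e_i$ and $f_i$ can a priori shift entries across $\alpha$-block boundaries and can alter the columns $p$ and $q$ appearing in \cref{lem: multiplication rule} in ways that interact nontrivially with the boundaries. Verifying stability of $N_\lambda$ therefore demands a careful tracking of how these modified column positions sit relative to the blocks prescribed by $\max(\lambda)$; once this combinatorial bookkeeping is in place, the projectivity and indecomposability steps reduce to relatively standard arguments using the idempotent $\varepsilon_\lambda$ and the cyclic generator $v_\lambda$.
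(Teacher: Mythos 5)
This theorem is not proved in the paper at all: it is imported verbatim from \cite[Theorem 5.1]{JSY16}, so your attempt has to be measured against that construction rather than against any argument in the present text. Your first step is essentially the correct (and the actual) framing: one shows that for each $\alpha\in\max(\lambda)\setminus\{(1,\dots,1)\}$ the span of $B^{\lambda,\alpha}$ is a left submodule of the projective left ideal $\mathbf{S}_0(n,r)k_\lambda$, so that $N_\lambda$, being a sum of submodules, is a submodule and the truncated action of \cref{action of ei and fi} is the genuine quotient action. The combinatorial point you flag does go through, and the reason is visible in \cref{lem: multiplication rule}: $e_i$ always moves the \emph{rightmost} nonzero entry of row $i+1$ and $f_i$ the \emph{leftmost} nonzero entry of row $i$, and these extremal choices are exactly what preserve column-openness with respect to a fixed $\alpha$ (the block structure is never crossed in a way that creates a strictly NW--SE pair of nonzero entries).

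The genuine gap is in your projectivity step. Since $\mathbf{S}_0(n,r)$ is not semisimple, the quotient $\mathbf{S}_0(n,r)k_\lambda/N_\lambda$ is not automatically projective, and your proposed remedy --- decomposing $k_\lambda$ into orthogonal idempotents ``indexed by the strata'' of coarsest $\alpha$ --- cannot work as stated: the sets $B^{\lambda,\alpha}$ are \emph{nested} (openness with respect to a coarser $\alpha$ implies openness with respect to every refinement), not disjoint, so the strata are not images of orthogonal idempotents in $k_\lambda\mathbf{S}_0(n,r)k_\lambda$, and no candidate $\varepsilon_\lambda$ is in sight. The argument of \cite{JSY16} instead splits the sequence $0\to N_\lambda\to \mathbf{S}_0(n,r)k_\lambda\to P_\lambda\to 0$ via a column-merging isomorphism: for $A$ open on columns with respect to $\alpha$, the supports of the columns within each block are forced to descend, so merging the columns of each block is a bijection onto matrices with column sums $\lambda(\alpha)$ (the parts of $\lambda$ summed over the blocks of $\alpha$); because $e_i,f_i$ act on rows, this gives a module isomorphism $\mathrm{span}\,B^{\lambda,\alpha}\cong\mathbf{S}_0(n,r)k_{\lambda(\alpha)}$, and induction yields $\mathbf{S}_0(n,r)k_\lambda\cong\bigoplus_{\alpha\in\max(\lambda)}P_{\lambda(\alpha)}$, with $P_\lambda$ the summand for $\alpha=(1,\dots,1)$. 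Without this (or some substitute), your splitting is unjustified. Your indecomposability sketch also has a soft spot: the weight space $k_\lambda P_\lambda$ can have dimension greater than one --- for $\lambda=(2,2)$ both $D_\lambda$ and $\left(\begin{smallmatrix}1&1\\1&1\end{smallmatrix}\right)$ lie in $B^\lambda$ with row sum $\lambda$ --- so the claim that any endomorphism not acting as a nonzero scalar on $v_\lambda$ is nilpotent requires an actual argument (for instance, that the submodule generated by the other $\lambda$-weight vectors does not contain $v_\lambda$), not merely the observation that endomorphisms are determined on the cyclic generator.
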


    Recall that \( \Lambda^{\bullet}(n,r) \) is the set of compositions \( (\lambda_1, \lambda_2, \dots, \lambda_n) \in \Lambda(n,r) \) such that if \( \lambda_i = 0 \) for some \( i \), then \( \lambda_j = 0 \) for all \( j > i \) 
    (see \cref{relation between weak and strong compositions}). 
    Indeed, this set serves as a complete set of representatives for the equivalence classes of $\Lambda(n,r)/\sim$, where the equivalence relation $\sim$ is defined in \cite[Section 5.4]{JSY16}.

    \begin{theorem} {\rm (\cite[Theorem 5.6]{JSY16})}\label{thm: pim complete set}
    The set $\{ P_{\lambda} \mid \lambda \in \Lambda^{\bullet}(n,r) \}$ forms a complete set of representatives for the isomorphism classes of projective indecomposable  $\mathbf{S}_0(n, r)$-modules.
    \end{theorem}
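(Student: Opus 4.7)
The plan is to establish two complementary claims: (a) the $P_\lambda$ for $\lambda \in \Lambda^{\bullet}(n,r)$ are pairwise non-isomorphic, and (b) every isomorphism class of projective indecomposable $\mathbf{S}_0(n,r)$-modules is represented by some $P_\lambda$ with $\lambda \in \Lambda^{\bullet}(n,r)$. Projectivity and indecomposability of each $P_\lambda$ (for every $\lambda \in \Lambda(n,r)$) are already supplied by the preceding theorem of Jensen--Su--Yang, so what remains is the completeness and distinctness of this specific family.

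For distinctness (a), I would exploit the decomposition $1 = \sum_{\nu \in \Lambda(n,r)} k_\nu$ into pairwise orthogonal idempotents. For any $\mathbf{S}_0(n,r)$-module $M$, the weight multiplicities $(\dim k_\nu M)_{\nu \in \Lambda(n,r)}$ form an isomorphism invariant. From the basis $B^\lambda$ of $P_\lambda$ in \eqref{basis for Pims} together with \eqref{eq: k_lambda act on e_A}, $\dim k_\nu P_\lambda$ equals the number of $A \in M_n(r)$ with $\co(A) = \lambda$, $\ro(A) = \nu$, and $A$ failing the open-on-columns condition for every nontrivial $\alpha \in \max(\lambda)$. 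I would then argue that for distinct $\lambda, \mu \in \Lambda^{\bullet}(n,r)$ the resulting weight multisets differ; a clean special case is to exhibit a ``top weight'' $\nu$ unique to $P_\lambda$ (for instance, using that $e_{D_\lambda} \in B^\lambda$ contributes to $\dim k_\lambda P_\lambda$), from which one can recover $\lambda$.

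For completeness (b), I would first construct, for each $\lambda \in \Lambda(n,r)$, an explicit $\mathbf{S}_0(n,r)$-module isomorphism $P_\lambda \cong P_{(\lambda^+)^{\bullet}}$. The idea is to define a linear map on bases by matching $e_A \in B^\lambda$ with $e_{A'} \in B^{(\lambda^+)^{\bullet}}$ via the column permutation that moves all zero columns of $A$ to the rightmost positions (and the corresponding permutation of rows on the input side). Using \cref{lem: multiplication rule} together with the defining relations of $\mathbf{S}_0(n,r)$, one verifies that this map respects the actions of the generators $e_i$, $f_i$, and $k_\nu$; the open-on-columns condition is preserved because the permutation acts compatibly with the blocks determined by $\max(\lambda)$. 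Finally, since $\mathbf{S}_0(n,r) = \bigoplus_{\lambda \in \Lambda(n,r)} \mathbf{S}_0(n,r) k_\lambda$ as a left module and each $P_\lambda$ is, by projectivity, a direct summand of $\mathbf{S}_0(n,r) k_\lambda$, every projective indecomposable, being a summand of the regular module, is isomorphic to some $P_\lambda$ and hence to some $P_\mu$ with $\mu \in \Lambda^{\bullet}(n,r)$.

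The main obstacle is verifying, in step (b), that the column-permutation bijection actually restricts to a bijection $B^\lambda \to B^{(\lambda^+)^{\bullet}}$ and is equivariant for the $\mathbf{S}_0(n,r)$-action. The open-on-columns condition is defined in terms of both the block structure prescribed by $\max(\lambda)$ and the $2 \times 2$ submatrix diagonals, so its preservation under the permutation requires careful case analysis. Equivariance under $e_i$ and $f_i$ is similarly delicate, as the indices $p$ and $q$ appearing in \cref{lem: multiplication rule} depend on the positions of zero rows and columns, which shift nontrivially under the column permutation; one has to check that the combinatorial choice of ``leftmost'' or ``rightmost'' nonzero entry survives the reindexing.
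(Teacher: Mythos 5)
Your overall architecture (pairwise non-isomorphism plus completeness) is reasonable, but both halves have genuine gaps; note also that the paper itself does not prove this statement but quotes it from [JSY16], where the real work is the decomposition of $\mathbf{S}_0(n,r)k_\lambda$ and the equivalence relation $\sim$ on $\Lambda(n,r)$. For your step (a), the invariant you propose provably fails: the multiset of weight multiplicities $(\dim k_\nu P_\lambda)_\nu$ does not separate the family. Concretely, take $n=r=3$, $\lambda=(2,1,0)$ and $\mu=(1,2,0)$, both in $\Lambda^{\bullet}(3,3)$. Enumerating $B^{\lambda}$ (the paper's own example) and $B^{\mu}$ (column sums $(1,2,0)$, failing openness with respect to $(2,1)$), both modules are $8$-dimensional with the identical multiset of row-sum weights
\[
\{(2,1,0),\,(1,2,0),\,(2,0,1),\,(1,0,2),\,(0,2,1),\,(0,1,2),\,(1,1,1),\,(1,1,1)\},
\]
so no choice of $\nu$, and in particular no ``top weight,'' can recover $\lambda$: the weight $(2,1,0)$ occurs (with multiplicity one, and lex-maximally) in both $P_{(2,1,0)}$ and $P_{(1,2,0)}$. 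Distinguishing these modules genuinely requires a finer invariant; the natural one is $\operatorname{top}(P_\lambda)\cong S_\lambda$, whose weight multisets do differ (this is exactly what \cref{thm: Slambda is simple} and \cref{prop: row bijection} provide, and what [JSY16] establishes through the relation $\sim$), but that machinery is not available inside your argument as written.

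For step (b), the reduction $P_\lambda\cong P_{(\lambda^+)^{\bullet}}$ by deleting the identically-zero columns is sound in essence — since $\lambda_j=0$ forces column $j$ of every $A$ with $\co(A)=\lambda$ to vanish, the relative order of nonzero columns, hence the indices $p,q$ of \cref{lem: multiplication rule} and the openness pattern within each $\max(\lambda)$-block, is preserved — though your parenthetical ``corresponding permutation of rows'' should be dropped: rows index the left action and the weights $k_\nu$, and any row permutation destroys $e_i,f_i$-equivariance. The fatal gap is the final sentence. That every projective indecomposable is a summand of the regular module is a triviality valid for any finite-dimensional algebra; it yields completeness only if you additionally know that $\mathbf{S}_0(n,r)=\bigoplus_\lambda \mathbf{S}_0(n,r)k_\lambda$ decomposes \emph{entirely} into copies of the modules $P_\mu$, so that Krull--Schmidt applies. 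You assert only that each $P_\lambda$ is a summand of $\mathbf{S}_0(n,r)k_\lambda$ (which itself needs the unproven claim that $P_\lambda$ is cyclic, generated by the weight-$\lambda$ vector $e_{D_\lambda}$, before projectivity can split anything), and this leaves open the possibility that the complementary summands of $\mathbf{S}_0(n,r)k_\lambda$ contain projective indecomposables outside your family. Producing the full decomposition of $\mathbf{S}_0(n,r)k_\lambda$ — in [JSY16] this is done via the filtration by the sets $B^{\lambda,\alpha}$, $\alpha\in\max(\lambda)$ — is precisely the missing content, and without it (or an independent count of the simple $\mathbf{S}_0(n,r)$-modules) completeness does not follow.
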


\begin{example}
        Let $\lambda = (2,1,0) \in \Lambda^{\bullet}(3,3)$.
        Then
        $$\{ \beta \cdot 1^{n-\ell(\lambda^+)} \mid \beta \in \Lambda^{+}(\ell(\lambda^+)) \}=\{ \beta \cdot 1^1 \mid \beta \in \Lambda^{+}(2) \} = \{(2,1), (1,1,1) \} .$$  
        It follows that \( B^{\lambda} \) consists of the basis elements \( e_A \), where the matrices $A$ are given by   
        \[
            \begin{bmatrix}
                2 & 0 & 0 \\
                0 & 1 & 0 \\
                0 & 0 & 0
                \end{bmatrix},
                \quad
                \begin{bmatrix}
                2 & 0 & 0 \\
                0 & 0 & 0 \\
                0 & 1 & 0
                \end{bmatrix},
                \quad
                \begin{bmatrix}
                1 & 0 & 0 \\
                1 & 1 & 0 \\
                0 & 0 & 0
                \end{bmatrix},
                \quad
                \begin{bmatrix}
                1 & 0 & 0 \\
                1 & 0 & 0 \\
                0 & 1 & 0
                \end{bmatrix},
        \]
        \[
            \begin{bmatrix}
                1 & 0 & 0 \\
                0 & 1 & 0 \\
                1 & 0 & 0
                \end{bmatrix},
                \quad
            \begin{bmatrix}
                0 & 0 & 0 \\
                2 & 0 & 0 \\
                0 & 1 & 0
                \end{bmatrix},
                \quad
            \begin{bmatrix}
                1 & 0 & 0 \\
                0 & 0 & 0 \\
                1 & 1 & 0
                \end{bmatrix},
                \quad
                \begin{bmatrix}
                0 & 0 & 0 \\
                1 & 0 & 0 \\
                1 & 1 & 0
                \end{bmatrix}.
        \]
        The actions of $e_i, f_i$ on $e_A$ for each $A \in B^{\lambda}$ are illustrated in \cref{fig:Structure of P_210}.
            In the figure, each node labeled by a matrix $A$ represents the element $e_A \in \beta^{\lambda}$. If there is a directed edge labeled by $b \in \{e_i, f_i\}$ from $A$ to $B$, it indicates that $b \cdot e_A = e_B$. Moreover, if for a node labeled by $A$ there is no directed edge labeled by $b$ with source $A$, it signifies that $b \cdot e_A = 0$. The action of $k_{\mu} \in \Lambda(3,3)$ is the same as in \cref{eq: k_lambda act on e_A}. 
    \end{example}
\begin{figure}[h!]
            \centering
\scalebox{0.7}{\begin{tikzpicture}[
  matrix/.style={text width=1cm,align=center},
  every node/.style={anchor=center},
  e_arrow/.style={->, draw=blue, thick},
  f_arrow/.style={->, draw=red, thick}
]

% Define the matrices
\node[circle,draw] (A) [matrix] {
  200\\010\\000
};

\node[circle,draw] (B) [matrix, below left=1cm and 1cm of A] {
  100 \\
  110 \\
  000 \\
};

\node[circle,draw] (C) [matrix, below right=1cm and 1cm of A] {
  200 \\
  000 \\
  010 \\
};

\node[circle,draw] (D) [matrix, below=1.4cm of B] {
  100 \\
  010 \\
  100 \\
};

\node[circle,draw] (E) [matrix, below=1.4cm of C] {
  100 \\
  100 \\
  010 \\
};

\node[circle,draw] (F) [matrix, below=1.4cm of D] {
  100 \\
  000 \\
  110 \\
};

\node[circle,draw] (G) [matrix, below=1.4cm of E] {
  000 \\
  200 \\
  010 \\
};

\node[circle,draw] (H) [matrix, below left=1cm and 1cm of G] {
  000 \\
  100 \\
  110 \\
};

% Draw the arrows
\draw[f_arrow] (A) -- (B) node[midway, left] {$f_1$};

\draw[f_arrow] (E) -- (F) node[midway, left] {$f_2$};

\draw[f_arrow] (G) -- (H) node[midway, right] {$f_2$};

\draw[e_arrow] (E) -- (B) node[midway, left] {$e_2$};

% Double arrows
\draw[f_arrow, bend left] (A) to node[midway, right] {$f_2$} (C);
\draw[e_arrow, bend left] (C) to node[midway, left] {$e_2$} (A);

\draw[f_arrow, bend left] (C) to node[midway, right] {$f_1$} (E);
\draw[e_arrow, bend left] (E) to node[midway, left] {$e_1$} (C);

\draw[f_arrow, bend left] (E) to node[midway, right] {$f_1$} (G);
\draw[e_arrow, bend left] (G) to node[midway, left] {$e_1$} (E);

\draw[f_arrow, bend left] (B) to node[midway, right] {$f_2$} (D);
\draw[e_arrow, bend left] (D) to node[midway, left] {$e_2$} (B);

\draw[f_arrow, bend left] (D) to node[midway, right] {$f_2$} (F);
\draw[e_arrow, bend left] (F) to node[midway, left] {$e_1$} (D);

\draw[f_arrow, bend left] (F) to node[midway, right] {$f_1$} (H);
\draw[e_arrow, bend left] (H) to node[midway, left] {$e_1$} (F);

\end{tikzpicture}}
\caption{The $S_0(3,3)_{\mathbb{C}}$-action on $P_{(2,1,0)}$}
\label{fig:Structure of P_210}      
 \end{figure}
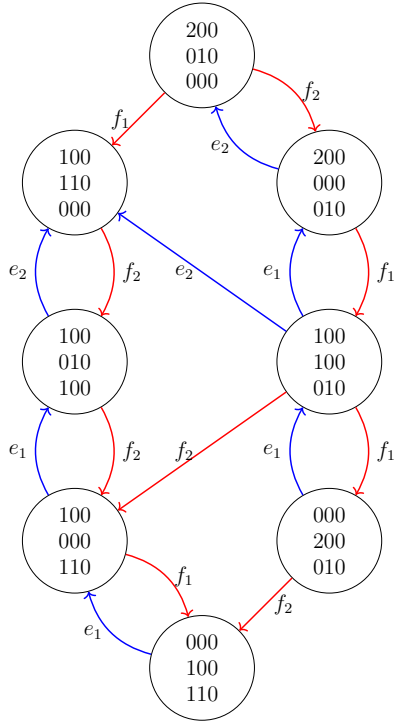

\subsection{Construction of simple $\mathbf{S}_0(n, r)$-modules}

This bijection is given by  
$P_\lambda \mapsto \operatorname{top}(P_\lambda)$  
for all \( \lambda \in \Lambda^{\bullet}(n,r) \).  
However, to the best of the authors' knowledge, an explicit description of \( \operatorname{rad}(P_\lambda) \) has not yet been provided.

In this section, we present a direct construction of simple modules for $\mathbf{S}_0(n, r)$.  
Using this construction, we also describe the radicals of projective indecomposable $\mathbf{S}_0(n, r)$-modules.

\begin{definition}\label{def: column block diagonal matrix}
    A \emph{column block diagonal matrix} is a finite matrix $A = (a_{i,j})$ with nonnegative integer entries satisfying the condition that 
    if $a_{i,j} > 0$, then $a_{r,s} = 0$ for all $r \le i$ and $s > j$.
    \end{definition}
    
    For $\lambda \in \Lambda^{\bullet}(n,r)$, let $\cb(\lambda)$ denote the set of $n \times n$ column block diagonal matrices $A$ satisfying $\co(A) = \lambda$.
    
    \begin{example}\label{ex: CB}
Let $\lambda = (2,1,0) \in \Lambda^\bullet(3,3)$. The set $\cb(\lambda)$ consists of the following matrices:
\[
\begin{bmatrix}
2 & 0 & 0 \\
0 & 1 & 0 \\
0 & 0 & 0
\end{bmatrix}, \quad
\begin{bmatrix}
2 & 0 & 0 \\
0 & 0 & 0 \\
0 & 1 & 0
\end{bmatrix}, \quad
\begin{bmatrix}
1 & 0 & 0 \\
1 & 0 & 0 \\
0 & 1 & 0
\end{bmatrix}, \quad
\begin{bmatrix}
0 & 0 & 0 \\
2 & 0 & 0 \\
0 & 1 & 0
\end{bmatrix}.
\]
    \end{example}

    Let $\lambda = (\lambda_1,\ldots,\lambda_t,0,\ldots,0)$ where $\lambda_i>0$ for $1 \le i \le t$ and $A \in \cb(\lambda)$. Then the following conditions hold by construction.
\begin{enumerate}
    \item For each $1 \le j \in t$, the $j$th column contains at least one nonzero entry.
    \item For each $1 \le i \le n $, the $i$th row contains at most one nonzero entry.
    \item For each $1 \le i \le n$, if $a_{i,j} > 0$ and $a_{i+1,k} > 0$, then $j \le k$.
\end{enumerate}
Set
\begin{align}\label{eq: m_j}
    m_j := 
    \begin{cases}
        1    &    \mbox{if $j=1$},\\
        \text{the smallest index $i$ such that } a_{i,j} > 0    &   \mbox{if $2 \le j \le t$}, \\ 
        n+1    &    \mbox{if $j=t+1$}.
    \end{cases}    
\end{align}
For each $1 \le j \le t$, we define the $j$th \emph{column block} of $A$, denoted $B_j(A)$, to be the submatrix of $A$ consisting of the rows from $m_j$ to $m_{j+1} - 1$ in column $j$. Then $A$ takes the form
\begin{align}\label{eq: column block of A}
A = 
\left(
\begin{array}{cccc}
B_1(A) & \mathbf{0} & \cdots & \mathbf{0} \\
\mathbf{0} & B_2(A) & \cdots & \mathbf{0} \\
\vdots & \vdots & \ddots & \vdots \\
\mathbf{0} & \mathbf{0} & \cdots & B_t(A)
\end{array}
\;\middle|\;
\mathbf{0}
\right),    
\end{align}
which justifies the terminology \emph{column block diagonal matrix}.

We now define
\[
\beta^{\lambda} := \{ e_A \mid A \in \cb(\lambda) \}.
\]
Since the diagonal matrix $\operatorname{diag}(\lambda_1, \lambda_2, \ldots, \lambda_n)$ lies in $\cb(\lambda)$, the basis element $k_\lambda$ is contained in $\beta^\lambda$. Moreover, as $\beta^{\lambda} \subseteq B^{\lambda,(1,1,\ldots,1)}$, it follows that $\beta^{\lambda} \subseteq B^{\lambda}$.

\begin{proposition}\label{submodule of PIM}
For each $\lambda \in \Lambda^{\bullet}(n,r)$, the $\mathbb{C}$-span of $B^{\lambda} \setminus \beta^{\lambda}$ forms an $\mathbf{S}_0(n, r)$-submodule of $P_\lambda$.
    \end{proposition}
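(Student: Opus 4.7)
The plan is to verify that $M := \Span_{\mathbb{C}}(B^{\lambda} \setminus \beta^{\lambda})$ is stable under the generators $e_i$, $f_i$ ($1 \le i \le n-1$), and $k_\mu$ ($\mu \in \Lambda(n,r)$) of $\mathbf{S}_0(n,r)$. The case of $k_\mu$ is immediate from \cref{eq: k_lambda act on e_A}: the action sends each basis element $e_A \in B^\lambda$ either to itself or to $0$, and hence preserves $M$ trivially. So the substantive work lies in the $e_i$ and $f_i$ actions.

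I would handle these by contrapositive: it suffices to show that if $e_A \in B^\lambda$ and either $e_i \cdot e_A$ or $f_i \cdot e_A$ in the module $P_\lambda$ equals a basis element $e_C$ with $C \in \cb(\lambda)$, then $A \in \cb(\lambda)$. Since $\beta^\lambda \subseteq B^\lambda$, the module action in \cref{action of ei and fi} agrees with the product in $S_0(n,r)$ whenever the result lies in $\beta^\lambda$, so by \cref{lem: multiplication rule} we may work with the explicit formulas $C = A + E_{i,p} - E_{i+1,p}$, where $p = \max\{j : a_{i+1,j}>0\}$, for $e_i$, and $C = A - E_{i,q} + E_{i+1,q}$, where $q = \min\{j : a_{i,j}>0\}$, for $f_i$.

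The core of the proof is then a short case analysis. Suppose, for contradiction, that $C \in \cb(\lambda)$ but $A \notin \cb(\lambda)$; by \cref{def: column block diagonal matrix}, there exist positions $(x,y)$ and $(r,s)$ with $a_{x,y} > 0$, $a_{r,s} > 0$, $r \le x$, and $s > y$. Since $A$ and $C$ agree outside column $p$ (respectively $q$) at rows $i$ and $i+1$, only finitely many subcases arise according to whether $(x,y)$ and $(r,s)$ coincide with $(i,p)$ or $(i+1,p)$ (respectively $(i,q)$ or $(i+1,q)$). The key observation is that the augmented entries $c_{i,p} = a_{i,p} + 1 \ge 1$ (in the $e_i$ case) and $c_{i+1,q} = a_{i+1,q} + 1 \ge 1$ (in the $f_i$ case) are always strictly positive, so they serve as CB-diagonal witnesses in $C$ and force large ``upper-right'' blocks of zeros, which translate back to $A$ at all positions where $A$ and $C$ agree.

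The main obstacle, and the delicate part of the case analysis, is the subcase where the diminished entry $c_{i+1,p} = a_{i+1,p} - 1$ (respectively $c_{i,q} = a_{i,q} - 1$) vanishes, so that the CB-diagonal condition on $C$ at this position gives no direct information about $A$. In this subcase I would invoke the extremality of $p$ (respectively $q$): since $p$ is the \emph{maximal} index with $a_{i+1,p} > 0$, row $i+1$ of $A$ is automatically zero strictly to the right of column $p$, and symmetrically for $q$ in row $i$. Combining this with the zeros obtained from the positive witness closes the last open subcase and yields $A \in \cb(\lambda)$, thereby establishing that $M$ is an $\mathbf{S}_0(n,r)$-submodule of $P_\lambda$.
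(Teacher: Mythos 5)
Your proposal is correct, and I verified that your case analysis closes: in every configuration of a CB-violating pair relative to the two modified positions, either the pair survives unchanged into $C$, or the incremented entry ($c_{i,p}=a_{i,p}+1$ for $e_i$, $c_{i+1,q}=a_{i+1,q}+1$ for $f_i$) replaces one member of the pair as a witness against $C\in\cb(\lambda)$, with the extremality of $p$ and $q$ disposing of the one remaining subcase. However, your route differs from the paper's in how it derives $A\in\cb(\lambda)$ from $b\cdot e_A\in\beta^\lambda$. The paper argues structurally about rows of $A$: since $B$ and $A$ agree outside rows $i$ and $i+1$, all other rows of $A$ inherit the at-most-one-nonzero-entry property from $B\in\cb(\lambda)$; it then shows row $i+1$ has at most one nonzero entry, and splits on whether row $i$ is zero or not, in the nonzero case concluding via the reconstruction $e_A=f_i\cdot e_B$ (and it dispatches $f_i$ with ``a similar argument''). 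You instead work directly from the negation of \cref{def: column block diagonal matrix}: you take an arbitrary violating pair $(x,y)$, $(r,s)$ in $A$ and transfer it to $C$, which makes the argument uniform and genuinely symmetric between $e_i$ and $f_i$, and it isolates precisely where the maximality of $p$ in \cref{lem: multiplication rule} is needed (the subcase where the decremented entry vanishes), a point the paper uses only implicitly through ``rightmost nonzero entry.'' What the paper's version buys in exchange is structural information (the row-by-row shape of $A$ and the relation $e_A=f_i\cdot e_B$) that is reused later, e.g.\ in the proof of \cref{lem: for main thm}. One small caution on your wording: for the $f_i$ case the relevant extremal fact is not literally symmetric in direction --- since $q=\min\{j\mid a_{i,j}>0\}$, it is that row $i$ of $A$ vanishes strictly to the \emph{left} of column $q$, which is what rules out an anchor $(i,y)$ with $y<q$ when the offender is $(i,q)$ itself; with that reading, your ``symmetrically for $q$ in row $i$'' is exactly right.
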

    
    \begin{proof}    
        Let $e_A \in B^{\lambda} \setminus \beta^{\lambda}$.  
        For the assertion, we only to see that $b \cdot e_A \in (B^{\lambda} \setminus \beta^{\lambda}) \cup \{ 0 \}$ for $b \in \{ e_i, f_i \mid 1 \le i \le n - 1 \} \cup \{ k_\mu \mid \mu \in \Lambda(n,r) \}$.

        For the sake of contradiction, assume that $e_i \cdot e_A \in \beta^{\lambda}$. 
        Then every row of $A$, except for the $i$th and $(i+1)$th rows, can have at most one nonzero entry. 
Recall that acting with $e_i$ on the left decreases the rightmost nonzero entry of the $(i+1)$th row by 1 and increases the entry of the $i$th row in the same column by 1 (see \cref{lem: multiplication rule} and \cref{action of ei and fi}). 
This implies that the $(i+1)$th row of $A$ can also have at most one nonzero entry. If it has two or more nonzero entries, then the action of $e_i$ will result in $e_i \cdot e_A \notin \beta^{\lambda}$. Therefore the following two cases are possible.
        
        \smallskip
        \noindent
        \emph{Case 1.} The $(i+1)$st row of $A$ is zero.  
        Then $e_i \cdot e_A = 0$ by definition.
        
        \smallskip
        \noindent
        \emph{Case 2.} The $(i+1)$st row has a unique nonzero entry at position $(i+1,s)$. Consider the $i$th row of $A$. Let $B \in \cb(\lambda)$ such that $e_i \cdot e_A = e_B$.
        
        \begin{itemize}
            \item Suppose that the $i$th row of $A$ is zero. Then $B$ has a unique nonzero entry in each of the $i$th and $(i+1)$st rows in the same column. This implies $e_A = f_i \cdot e_B$ and $A \in \cb(\lambda)$, contradicting the assumption.
        
            \item Suppose that the $i$th row of $A$ contains a nonzero entry,  say at position $(i,t)$. 
            If $t \neq s$, then $B$ has nonzero entries in row $i+1$ at $(i+1,t)$ and $(i+1,s)$. But this cannot occur since $B \in \cb(\lambda)$.
            This tells us that $t=s$ and both the $i$th and $(i+1)$st rows of $A$ have unique nonzero entries in the same column. From this it follows that $A \in \cb(\lambda)$, which contradicts the assumption that $A \notin \cb(\lambda)$.
        \end{itemize}
        
        Therefore, $e_i \cdot e_A \in B^{\lambda} \setminus \beta^{\lambda} \cup \{ 0 \}$ in all cases. 
        
        A similar argument shows that $f_i \cdot e_A$ also lies in the same set. And, from
        \[
        k_{\mu} \cdot e_A =
        \begin{cases}
        e_A    & \text{if } \operatorname{row}(A) = \mu, \\ 
        0      & \text{otherwise},
        \end{cases}
        \]
        it follows that 
        $k_{\mu} \cdot e_A \in (B^{\lambda} \setminus \beta^{\lambda}) \cup \{ 0 \}$.
    \end{proof}

For $\lambda \in \Lambda^{\bullet}(n,r)$, let $N_\lambda$ denote the submodule $\mathbb{C}[B^{\lambda} \setminus \beta^{\lambda}]$ of $P_\lambda$.
Let $$S_\lambda := P_\lambda / N_{\lambda}.$$ 
Then  
\[
\left\{ \overline{e_A} := e_A + N_{\lambda} \mid A \in \cb(\lambda) \right\}
\]
forms a basis for $S_\lambda$.
The action of $S_0(n,r)$ on this basis is given by
\[
b \cdot \overline{e_A} =
\begin{cases}
\overline{e_B} & \text{if } b \cdot e_A = e_B \text{ and } e_B \in \beta^{\lambda}, \\
0 & \text{otherwise},
\end{cases}
\]
for $b \in \{ e_i, f_i \mid 1 \le i \le n - 1 \} \cup \{ k_\mu \mid \mu \in \Lambda(n,r) \}$.

\begin{example}
    Let $\lambda = (2,1,0) \in \Lambda^\bullet(3,3)$. 
    The actions of $e_i$ and $f_i$ on the basis $\{ \overline{e_A} \mid A \in \cb(\lambda) \}$ are illustrated in \cref{fig:Structure of S_210}.
    \end{example}

\begin{figure}[h!]
    \centering

    \scalebox{0.7}{\begin{tikzpicture}
    [
        matrix/.style={text width=1cm,align=center},
        every node/.style={anchor=center},
        e_arrow/.style={->, draw=blue, thick},
        f_arrow/.style={->, draw=red, thick}
      ]
      
      % Define the matrices
      \node[circle,draw] (A) [matrix] {
        200\\010\\000
      };

      % \node[circle,draw] (C) [matrix, below right=1cm and 1cm of A] {
      \node[circle,draw] (C) [matrix, below=1.4cm of A] {
        200 \\
        000 \\
        010 \\
      };

      \node[circle,draw] (E) [matrix, below=1.4cm of C] {
        100 \\
        100 \\
        010 \\
      };

      \node[circle,draw] (G) [matrix, below=1.4cm of E] {
        000 \\
        200 \\
        010 \\
      };

      % Draw the arrows

      % Double arrows
      \draw[f_arrow, bend left] (A) to node[midway, right] {$f_2$} (C);
      \draw[e_arrow, bend left] (C) to node[midway, left] {$e_2$} (A);

      \draw[f_arrow, bend left] (C) to node[midway, right] {$f_1$} (E);
      \draw[e_arrow, bend left] (E) to node[midway, left] {$e_1$} (C);
      
      \draw[f_arrow, bend left] (E) to node[midway, right] {$f_1$} (G);
      \draw[e_arrow, bend left] (G) to node[midway, left] {$e_1$} (E);
      
      \end{tikzpicture}
      }

      \caption{The $S_0(3,3)_{\mathbb{C}}$-action on $S_{(2,1,0)}$}
      \label{fig:Structure of S_210}      
\end{figure}
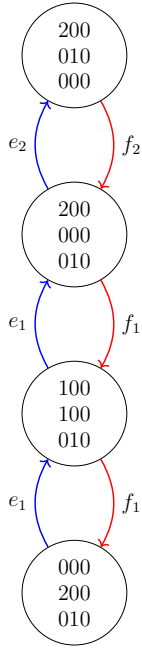

Given two weak compositions $\mu = (\mu_1, \ldots, \mu_n)$ and $\lambda = (\lambda_1, \ldots, \lambda_n)$ of $r$, we say that $\mu$ is a \emph{refinement} of $\lambda$, written $\mu \preceq \lambda$, if $\mu$ can be obtained by subdividing the parts of $\lambda$ in order.
Let
\begin{align}\label{eq: refinement}
\Lambda(n,r)_{\preceq \lambda}:= \{ \mu \in \Lambda(n,r) \mid \mu \preceq \lambda \}.
\end{align}
\begin{proposition}\label{prop: row bijection}
For each $\lambda \in \Lambda^{\bullet}(n,r)$, the map 
\[
\ro : \cb(\lambda) \to \Lambda(n,r)_{\preceq \lambda}, \quad A \mapsto \ro(A)
\]
is a bijection. Here, $\ro(A)$ is the row-sum vector of $A$.
\end{proposition}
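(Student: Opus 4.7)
The plan is to read off the column-block structure of a column block diagonal matrix directly from its row-sum vector, which will simultaneously yield surjectivity and injectivity of $\ro$.

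Write $\lambda = (\lambda_1, \ldots, \lambda_t, 0, \ldots, 0)$ with $\lambda_j > 0$ for $j \le t$, and set $L_j := \lambda_1 + \cdots + \lambda_j$. The key structural observation is that for $A \in \cb(\lambda)$ with column-block boundaries $1 = m_1 < m_2 < \cdots < m_t < m_{t+1} = n+1$ as in \cref{eq: m_j}, each row $i$ with $m_j \le i < m_{j+1}$ is forced by \cref{def: column block diagonal matrix} to vanish outside column $j$. Hence
\[
\ro(A) \;=\; \bigl(B_1(A),\, B_2(A),\, \ldots,\, B_t(A)\bigr),
\]
the concatenation of the column blocks, whose $j$-th segment is a sequence of $m_{j+1}-m_j$ nonnegative integers summing to $\co(A)_j = \lambda_j$. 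This immediately places $\ro(A)$ in $\Lambda(n,r)_{\preceq \lambda}$: it is obtained from $\lambda$ by refining each positive $\lambda_j$ into the entries of $B_j(A)$ and refining each trailing zero of $\lambda$ into an empty sequence.

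For the inverse, given $\mu \in \Lambda(n,r)_{\preceq \lambda}$ with partial sums $P_i := \mu_1 + \cdots + \mu_i$, I would set $b_0 := 0$ and
\[
b_j := \max\bigl\{\, i \in \{1,\ldots,n\} : P_i = L_j \,\bigr\} \qquad (1 \le j \le t).
\]
The refinement hypothesis guarantees that each $L_j$ is attained among the $P_i$, so the $b_j$ are well-defined with $0 = b_0 < b_1 < \cdots < b_t = n$; moreover, the maximality of $b_{j-1}$ forces $\mu_{b_{j-1}+1} > 0$ for $j \ge 2$, since $P_{b_{j-1}+1} > L_{j-1}$. Defining $A$ by $a_{i,j} := \mu_i$ for $b_{j-1} < i \le b_j$ (with $1 \le j \le t$) and zero elsewhere, one checks that $\co(A) = \lambda$, that $A$ satisfies the condition of \cref{def: column block diagonal matrix} with boundaries $m_j = b_{j-1}+1$, and that $\ro(A) = \mu$, so $\ro$ is surjective.

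For injectivity, I would observe that any $A \in \cb(\lambda)$ with $\ro(A) = \mu$ has boundaries $m_j$ (for $j \ge 2$) satisfying $\mu_{m_j} > 0$ together with $\mu_{m_{j-1}} + \cdots + \mu_{m_j - 1} = \lambda_{j-1}$; these conditions, via the maximality definition of $b_{j-1}$, pin down $m_j = b_{j-1}+1$, and the entries of $A$ within each block are then dictated by $\mu$. The only genuinely delicate point in the argument will be verifying that the canonical choice $b_j = \max\{i : P_i = L_j\}$ aligns exactly with the block structure produced by \cref{eq: m_j}; the remaining verifications are routine bookkeeping.
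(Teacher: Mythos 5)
Your proof is correct and follows essentially the same route as the paper's: both establish the bijection by noting that $\ro(A)$ concatenates the column blocks of $A$ and then constructing the explicit inverse $\mu \mapsto A_\mu$ from the block decomposition of $\mu$ induced by the refinement $\mu \preceq \lambda$. Your only addition is the canonical formula $b_j = \max\{\, i : P_i = L_j \,\}$, which makes explicit the uniqueness of the index sequence $1 = i_1 < i_2 < \cdots < i_{t+1} = n+1$ that the paper simply asserts, and your verification that it matches the boundaries of \cref{eq: m_j} is sound.
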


\begin{proof}
Let $A \in \cb(\lambda)$ and let $\{ B_j(A) \}$ denote the column blocks of $A$. Then the row-sum vector of $A$ is given by
\[
\ro(A) = 
\left(
\begin{array}{c}
B_1(A)\\
B_2(A)\\
\vdots\\
B_t(A)
\end{array}
\right)^{\mathsf{T}}.
\]

To show that $\ro$ is a bijection, we construct its inverse. Let $\mu = (\mu_1, \ldots, \mu_n) \in \Lambda(n,r)_{\preceq \lambda}$. Since $\mu \preceq \lambda$, there exists a unique sequence of indices
\[
1 = i_1 < i_2 < \cdots < i_{t+1} = n+1
\]
such that
\[
\lambda_k = \mu_{i_k} + \mu_{i_k+1} + \cdots + \mu_{i_{k+1}-1}
\quad \text{for each } 1 \le k \le t,
\]
and $\mu_{i_k} > 0$ for all $2 \le k \le t$.

Define the matrix $A_\mu = (b_{i,j})$ by
\[
b_{i,j} := 
\begin{cases}
\mu_i & \text{if } i_j \le i < i_{j+1}, \\
0     & \text{otherwise}.
\end{cases}
\]
Then $A_\mu \in \cb(\lambda)$ satisfies $\ro(A_\mu) = \mu$, and for any $A \in \cb(\lambda)$, we have $A_{\ro(A)} = A$. Hence, the map $\mu \mapsto A_\mu$ defines the inverse of $\ro$.
\end{proof}

We introduce several remarkable properties of the basis $\beta^\lambda$ of $S_\lambda$.

\begin{lemma}\label{lem: for main thm}
Let $\lambda \in \Lambda^{\bullet}(n,r)$, and let $A \in \cb(\lambda)$. 
\begin{enumerate}[label = {\rm (\arabic*)}]
    \item
    There exist indices $1 \leq i_1, i_2, \ldots, i_l \leq n-1$ such that 
    \[
    e_{i_1} e_{i_2} \cdots e_{i_l} \cdot \overline{e_A} = \overline{k_\lambda}.
    \]
    
    \item
    The identity in {\rm(1)} holds if and only if 
    \[
    \overline{e_A} = f_{i_l} \cdots f_{i_2} f_{i_1} \cdot \overline{k_\lambda}.
    \]
    
    \item
    Suppose
    \[
    \overline{e_A} = f_{i_l} \cdots f_{i_2} f_{i_1} \cdot \overline{k_\lambda} = f_{j_s} \cdots f_{j_2} f_{j_1} \cdot \overline{k_\lambda}.
    \]
    Then the multisets $\{i_1, \ldots, i_l\}$ and $\{j_1, \ldots, j_s\}$ are equal. In particular, $l = s$.
\end{enumerate}
\end{lemma}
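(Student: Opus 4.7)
The plan is to prove the three parts in order: first construct the sequence of $e_i$'s by induction, then derive an inversion identity between the $e_i$- and $f_i$-actions on $\beta^{\lambda}$ to get (2), and finally pin down (3) by tracking the row-sum vector.

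For (1), I will induct on $d(A) := \sum_i i\cdot (\ro(A))_i$. By \cref{prop: row bijection}, the value $d(A)$ is minimized precisely at $\ro(A) = \lambda$, corresponding to $A = D_\lambda$ and $\overline{e_A} = \overline{k_\lambda}$; there the empty sequence works. Otherwise, using \cref{lem: multiplication rule} together with the defining action on $S_\lambda$, one sees that $e_i \cdot \overline{e_A} = \overline{e_{A'}}$ for some $A' \in \cb(\lambda)$ exactly when row $i+1$ of $A$ is nonzero with unique entry at some column $p$, and row $i$ is either empty or has its unique entry at the same column $p$; in that case $d(A') = d(A) - 1$. The main obstacle is to verify that such an $i$ always exists when $A \neq D_\lambda$. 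I will argue by contradiction: if no such $i$ exists, then whenever row $i+1$ is nonzero so is row $i$, and when both are nonzero their respective columns strictly increase. Thus the nonzero rows of $A$ form an initial segment $\{1,\ldots,k\}$ with entries at strictly increasing columns $c_1 < \cdots < c_k$. Combined with the inequality $c_i \leq i$ (which follows from $m_j \geq j$ in \cref{eq: m_j}), this forces $c_i = i$ for all $i$, whence $\co(A) = \lambda$ gives $A = D_\lambda$, a contradiction.

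For (2), the key identity is the following inversion property: if $e_i \cdot \overline{e_A} = \overline{e_B}$ with $A, B \in \cb(\lambda)$, then $f_i \cdot \overline{e_B} = \overline{e_A}$; the reverse $f\to e$ assertion holds by the same argument. I verify this directly from \cref{lem: multiplication rule}, using that each row of a matrix in $\cb(\lambda)$ has at most one nonzero entry: $e_i$ moves the unique nonzero of row $i+1$ of $A$ up to the same column $p$ in row $i$; since $B \in \cb(\lambda)$ this $(i,p)$ entry is the unique (hence leftmost) nonzero of row $i$, so $f_i$ undoes the move. Given the inversion, (2) follows by telescoping: from $e_{i_1}\cdots e_{i_l}\cdot \overline{e_A} = \overline{k_\lambda}$, set $A_l := A$ and inductively define $A_{k-1} \in \cb(\lambda)$ by $e_{i_k}\cdot \overline{e_{A_k}} = \overline{e_{A_{k-1}}}$; the inversion gives $\overline{e_{A_k}} = f_{i_k}\cdot \overline{e_{A_{k-1}}}$, and iteration yields $\overline{e_A} = f_{i_l}\cdots f_{i_1}\cdot \overline{k_\lambda}$. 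The converse implication is strictly dual.

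For (3), the argument is purely combinatorial. If $\overline{e_A} = f_{i_l}\cdots f_{i_1}\cdot \overline{k_\lambda}$, each partial product $f_{i_k}\cdots f_{i_1}\cdot \overline{k_\lambda}$ must be a nonzero basis element $\overline{e_{A_k}}$ (else the full product vanishes), and by \cref{lem: multiplication rule} we have $\ro(A_k) = \ro(A_{k-1}) - \epsilon_{i_k} + \epsilon_{i_k+1}$. Setting $\mu := \ro(A)$ and summing over $k$ gives
\[
\mu - \lambda \;=\; \sum_{k=1}^{l}\bigl(\epsilon_{i_k+1} - \epsilon_{i_k}\bigr).
\]
Letting $n_i := |\{k : i_k = i\}|$, the coefficient equation $n_{i-1} - n_i = \mu_i - \lambda_i$ (with $n_0 = n_n = 0$) uniquely determines $n_i = \sum_{j=1}^{i}(\lambda_j - \mu_j)$, which depends only on $\lambda$ and $\mu$. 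Hence the multiset $\{i_1,\ldots,i_l\}$ is determined by $\ro(A)$, so any two such sequences share it; in particular $l = s = \sum_i n_i$.
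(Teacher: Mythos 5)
Your proof is correct, and parts (2) and (3) follow essentially the paper's own route: the paper proves exactly your inversion property (if $f_i \cdot \overline{e_M} = \overline{e_N}$ for $M,N \in \cb(\lambda)$ then $e_i \cdot \overline{e_N} = \overline{e_M}$, and conversely, using that each row of a matrix in $\cb(\lambda)$ has at most one nonzero entry, so the relevant entry is simultaneously leftmost and rightmost in its row), and it deduces (3) by recording the change $\ro(N) = \ro(M) + (0,\ldots,0,-1,1,0,\ldots,0)$ of the row-sum vector, so that the multiplicity of each $f_i$ is read off from $\lambda - \ro(A)$; your explicit solution $n_i = \sum_{j \le i}(\lambda_j - \mu_j)$ merely makes that bookkeeping precise. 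Part (1) is where you genuinely diverge. The paper argues constructively: it first applies $e_i$'s to push all entries of each column block up to its top row $m_j$, producing a matrix $B$ with $\ro(B) = (\lambda_1, 0, \ldots, 0, \lambda_2, 0, \ldots, 0, \ldots, \lambda_t, 0, \ldots, 0)$, and then applies further $e_i$'s to move the blocks into diagonal position, reaching $\overline{k_\lambda}$ --- leaving implicit the verification that every intermediate matrix stays in $\cb(\lambda)$ and that the process terminates. You instead run an induction on the potential $d(A) = \sum_i i \cdot \ro(A)_i$ together with an existence-of-a-move lemma: if no $e_i$ acts nonzero on $\overline{e_A}$, then the nonzero rows of $A$ form an initial segment $\{1,\ldots,k\}$ with strictly increasing columns $c_1 < \cdots < c_k$, and the bound $c_i \le i$ (which does follow from $m_j \ge j$, since the entry of row $i$ lies in column $j$ with $m_j \le i$) forces $c_i = i$ and hence $A = D_\lambda$. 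This is a reasonable trade: the paper's version exhibits an explicit word in the $e_i$'s adapted to the block structure, while yours is less constructive in appearance but makes fully explicit the two points the paper glosses over, namely that each elementary move preserves membership in $\cb(\lambda)$ (your case analysis of when $e_i \cdot \overline{e_A} \ne 0$ --- row $i+1$ nonzero with its unique entry in column $p$, and row $i$ either empty or supported in the same column $p$ --- is correct) and that the procedure terminates. Your side remark that $d$ is minimized uniquely at $\ro(A) = \lambda$ is true but not actually needed; the induction only requires that $d$ strictly decreases and that $D_\lambda$ is the unique stuck configuration, both of which you establish.
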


\begin{proof}
(1) Suppose that $A$ has column blocks as defined in~\cref{eq: column block of A}. Since the action of $e_i$ transfers a unit from row $i+1$ to row $i$ within the same column, one can successively apply suitable $e_i$'s to move all nonzero entries in each column block upward. In this way, we obtain a matrix $B$ with the same indices $m_j$ as in~\cref{eq: m_j}, and whose row-sum vector is given by
\[
\ro(B) = (\lambda_1, 0, \ldots, 0,\ \lambda_2, 0, \ldots, 0,\ \ldots,\ \lambda_t, 0, \ldots, 0).
\]
By further applying appropriate $e_i$'s to $\overline{e_B}$, we obtain $\overline{k_\lambda}$, which proves the assertion.

(2) Suppose $f_i \cdot \overline{e_M} = \overline{e_N}$ for some $M, N \in \cb(\lambda)$. By the multiplication rule, this implies that the $i$th row of $M$ and the $(i+1)$st row of $N$ each have a unique nonzero entry in the same column $j$. The operator $f_i$ decreases the $(i,j)$-entry and increases the $(i+1,j)$-entry, while $e_i$ reverses this transformation. Thus,
\[
e_i \cdot \overline{e_N} = \overline{e_M}.
\]
The converse follows by symmetry. This completes the proof of (2).

(3) Whenever $f_i \cdot \overline{e_M} = \overline{e_N}$, the row-sum vector changes by
\begin{align}\label{eq: row vector f action}
\ro(N) = \ro(M) + (\underbrace{0, \ldots, 0}_{i-1}, -1, 1, 0, \ldots, 0).
\end{align}
Therefore, the number of times $f_i$ appears in the expression $f_{i_l} \cdots f_{i_1} \cdot \overline{k_\lambda}$ is equal to the coefficient of $(\underbrace{0, \ldots, 0}_{i-1}, -1, 1, 0, \ldots, 0)$ in the difference of the vectors $\lambda - \ro(A)$. This determines the multiset $\{i_1, \ldots, i_l\}$ uniquely.
\end{proof}

    By \cref{lem: for main thm}~(1),~(2), every element $\overline{e_A} \in \beta^\lambda$ can be written as
\[
\overline{e_A} = f_{i_l} \cdots f_{i_2} f_{i_1} \cdot \overline{k_\lambda}
\]
for some indices $1 \le i_1, \ldots, i_l \le n - 1$.  
By \cref{lem: for main thm}~(3), the length $l$ is uniquely determined; we denote it by $\ell(\overline{e_A}) := l$.

The following lemma will be used in the proof of the main theorem in this section.

\begin{lemma}\label{lem: for main thm2}
    Let $\lambda \in \Lambda^{\bullet}(n,r)$ and $A, B \in \cb(\lambda)$. If
    \[
    \overline{e_A} = f_{i_l}  \cdots f_{i_2} f_{i_1} \cdot \overline{k_\lambda} \quad \text{ and }\quad \ell(\overline{e_B}) \le l,
    \]
    then
    \[
    e_{i_1} e_{i_2} \cdots  e_{i_l} \cdot \overline{e_B} =
    \begin{cases}
    \overline{k_\lambda} & \text{if } B = A, \\
    0 & \text{otherwise}.
    \end{cases}
    \]
    \end{lemma}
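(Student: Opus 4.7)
The plan is to proceed by induction on $l$, invoking the three parts of \cref{lem: for main thm} at each stage. In the base case $l = 0$, the expression $\overline{e_A} = \overline{k_\lambda}$ forces $A = \operatorname{diag}(\lambda_1, \ldots, \lambda_n)$, and the hypothesis $\ell(\overline{e_B}) \le 0$ forces $B$ to be the same diagonal matrix; the empty product acts as the identity, and the claim holds trivially.

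For the inductive step, I would introduce the auxiliary element $A' \in \cb(\lambda)$ defined by $\overline{e_{A'}} := f_{i_{l-1}} \cdots f_{i_1} \cdot \overline{k_\lambda}$, so that $\overline{e_A} = f_{i_l} \cdot \overline{e_{A'}}$ and $\ell(\overline{e_{A'}}) = l - 1$. The strategy is then to peel off the rightmost operator $e_{i_l}$ first and analyze $e_{i_l} \cdot \overline{e_B}$. If this vanishes, the whole product vanishes; moreover, \cref{lem: for main thm}~(2) applied to $\overline{e_A} = f_{i_l} \cdot \overline{e_{A'}}$ yields $e_{i_l} \cdot \overline{e_A} = \overline{e_{A'}} \ne 0$, so this case is incompatible with $B = A$, and the output $0$ is consistent with the claim. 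Otherwise $e_{i_l} \cdot \overline{e_B} = \overline{e_{B'}}$ for some $B' \in \cb(\lambda)$, and \cref{lem: for main thm}~(2) gives $\overline{e_B} = f_{i_l} \cdot \overline{e_{B'}}$.

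The step I expect to be most delicate is the bookkeeping required to show that $\ell(\overline{e_{B'}}) \le l - 1$, so that the inductive hypothesis applies. Combining \cref{eq: row vector f action} with part (3) of \cref{lem: for main thm}, which expresses $\ell$ as a function of the row-sum vector via the unique unit-jump decomposition of $\lambda - \ro(\cdot)$, one sees that a single $f_{i_l}$-application increases $\ell$ by exactly one; hence $\ell(\overline{e_{B'}}) = \ell(\overline{e_B}) - 1 \le l - 1$. The inductive hypothesis applied to $A'$ and $B'$ then gives $e_{i_1} \cdots e_{i_{l-1}} \cdot \overline{e_{B'}} = \overline{k_\lambda}$ if $B' = A'$ and $0$ otherwise. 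To close the argument, I would verify the equivalence $B' = A' \Leftrightarrow B = A$: the forward direction from $\overline{e_B} = f_{i_l} \cdot \overline{e_{B'}} = f_{i_l} \cdot \overline{e_{A'}} = \overline{e_A}$ together with the injectivity of $A \mapsto \overline{e_A}$ on $\cb(\lambda)$, and the reverse direction from $\overline{e_{B'}} = e_{i_l} \cdot \overline{e_B} = e_{i_l} \cdot \overline{e_A} = \overline{e_{A'}}$. Everything beyond the length bound is purely formal.
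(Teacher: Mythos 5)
Your proof is correct and takes essentially the same route as the paper's: both arguments rest on the facts that each $e_i$-step lowers $\ell$ by exactly one (via the row-sum bookkeeping underlying \cref{lem: for main thm}~(3)), that $\overline{k_\lambda}$ is the unique length-zero element of $\beta^\lambda$, and that \cref{lem: for main thm}~(2) inverts $f$-steps by $e$-steps. Your induction on $l$ simply unrolls, one operator at a time, the paper's direct two-case argument distinguishing $\ell(\overline{e_B})<l$ from $\ell(\overline{e_B})=l$.
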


    \begin{proof}
        Suppose $\overline{e_A} = f_{i_l}  \cdots f_{i_2} f_{i_1} \cdot \overline{k_\lambda}$ and let $B \in \cb(\lambda)$.
        
        First, observe that for any $N \in \cb(\lambda)$ and any $1 \le i \le n-1$, we have
        \[
        \ell(f_i \cdot \overline{e_N}) = \ell(\overline{e_N}) - 1 \quad \text{unless } f_i \cdot \overline{e_N} = 0.
        \]
        In particular, since $e_i \cdot \overline{k_\lambda} = 0$ for all $i$, it follows that
        \[
            e_{i_1} e_{i_2} \cdots  e_{i_l} \cdot \overline{e_B} = 0 \quad \text{if } \ell(\overline{e_B}) < l.
        \]
        
        Now assume $\ell(\overline{e_B}) = \ell(\overline{e_A}) = l$. Then applying $e_{i_1} e_{i_2} \cdots  e_{i_l}$ to $\overline{e_B}$ is either $\overline{k_\lambda}$ or $0$, since $\overline{k_\lambda}$ is the unique element in $\beta^{\lambda}$ of length $0$.

        If $e_{i_1} e_{i_2} \cdots e_{i_l} \cdot \overline{e_B} = \overline{k_\lambda}$,
        then by \cref{lem: for main thm}~(2), we have
        \[
        \overline{e_B} = f_{i_l} \cdots f_{i_2} f_{i_1} \cdot \overline{k_\lambda} = \overline{e_A}.
        \]
        Hence $A = B$.
        \end{proof}
        
            We are now ready to state the main result of this section.
\begin{theorem}\label{thm: Slambda is simple}
The set $\{ S_{\lambda} \mid \lambda \in \Lambda^{\bullet}(n,r) \}$ forms a complete set of representatives of the isomorphism classes of simple $\mathbf{S}_0(n, r)$-modules.
\end{theorem}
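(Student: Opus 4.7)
The plan is to first prove that each $S_\lambda$ is a simple $\mathbf{S}_0(n,r)$-module, then identify $S_\lambda$ with $\operatorname{top}(P_\lambda)$, and finally invoke \cref{thm: pim complete set} via the canonical bijection between the isomorphism classes of projective indecomposable modules and simple modules.

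For simplicity, I would take an arbitrary nonzero submodule $M \subseteq S_\lambda$ and a nonzero element
\[
x = \sum_{A \in \cb(\lambda)} c_A\,\overline{e_A} \in M.
\]
Among the $A$'s with $c_A \neq 0$, choose $A_0$ so that $l := \ell(\overline{e_{A_0}})$ is maximal, and use \cref{lem: for main thm}(1),(2) to write $\overline{e_{A_0}} = f_{i_l}\cdots f_{i_1}\cdot \overline{k_\lambda}$. Since every $B$ in the support of $x$ satisfies $\ell(\overline{e_B}) \le l$ by the choice of $A_0$, applying $e_{i_1} e_{i_2}\cdots e_{i_l}$ to $x$ and invoking \cref{lem: for main thm2} gives
\[
e_{i_1} e_{i_2}\cdots e_{i_l}\cdot x \;=\; c_{A_0}\,\overline{k_\lambda} \;\in\; M,
\]
so $\overline{k_\lambda} \in M$. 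A second application of \cref{lem: for main thm}(2) produces every basis element $\overline{e_A}$ from $\overline{k_\lambda}$ by acting with an appropriate monomial in the $f_i$'s, which forces $M = S_\lambda$.

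To finish the proof, observe that $S_\lambda$ is, by construction, a simple quotient of $P_\lambda$, so the kernel $N_\lambda$ necessarily contains $\operatorname{rad}(P_\lambda)$. The induced surjection $\operatorname{top}(P_\lambda) \twoheadrightarrow S_\lambda$ is therefore a map between simple modules and hence an isomorphism. Since $\{P_\lambda \mid \lambda \in \Lambda^{\bullet}(n,r)\}$ is a complete set of representatives of projective indecomposable $\mathbf{S}_0(n,r)$-modules by \cref{thm: pim complete set}, the canonical bijection $P \mapsto \operatorname{top}(P)$ transports it onto the complete set $\{S_\lambda \mid \lambda \in \Lambda^{\bullet}(n,r)\}$ of simple $\mathbf{S}_0(n,r)$-modules.

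The main obstacle is the cancellation step in the simplicity argument: every term in the support of $x$ other than $\overline{e_{A_0}}$ must be simultaneously annihilated by $e_{i_1}\cdots e_{i_l}$. This is precisely why $A_0$ must be chosen of maximal length, so that the hypothesis $\ell(\overline{e_B}) \le l$ in \cref{lem: for main thm2} is satisfied uniformly across the support. Once this subtlety is handled, every other step reduces to bookkeeping with the two structural lemmas and a standard appeal to the projective-to-simple correspondence.
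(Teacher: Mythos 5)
Your proposal is correct and follows essentially the same route as the paper: pick a maximal-length element in the support, apply the corresponding monomial $e_{i_1}\cdots e_{i_l}$ and \cref{lem: for main thm2} to extract $\overline{k_\lambda}$, conclude simplicity since $\overline{k_\lambda}$ generates $S_\lambda$ by \cref{lem: for main thm}, and then identify $S_\lambda$ with $\operatorname{top}(P_\lambda)$ to invoke \cref{thm: pim complete set}. Your explicit remark that the induced surjection $\operatorname{top}(P_\lambda)\twoheadrightarrow S_\lambda$ between simple modules is an isomorphism merely spells out what the paper leaves implicit.
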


\begin{proof}
Let $\lambda \in \Lambda^{\bullet}(n,r)$, and let $c \in S_\lambda$ be a nonzero element. Denote by $C$ the ${S_0(n,r)}_\mathbb{C}$-submodule of $S_\lambda$ generated by $c$. It suffices to show that $C = S_\lambda$.

Write $c = c_1 v_1 + c_2 v_2 + \cdots + c_k v_k$ for some $k \in \mathbb{Z}_{>0}$, where each $c_i \in \mathbb{C} \setminus \{0\}$ and $v_i \in \beta^\lambda$. Without loss of generality, assume that $v_1$ has maximal length among $\{ \ell(v_i) \mid 1 \le i \le k \}$.

Suppose $v_1 = f_{i_l} \cdots f_{i_2} f_{i_1} \cdot \overline{k_\lambda}$. Then, by \cref{lem: for main thm2}, we have
\[
e_{i_1} e_{i_2} \cdots e_{i_l} \cdot c = c_1 \overline{k_\lambda}.
\]
Hence $\overline{k_\lambda} \in C$. Since $S_\lambda$ is generated by $\overline{k_\lambda}$, it follows that $C = S_\lambda$. Thus $S_\lambda$ is simple.

Finally, since each projective indecomposable module $P_\lambda$ has a unique simple quotient, the assertion follows from \cref{thm: pim complete set}.
\end{proof}

\cref{thm: Slambda is simple} tells us that  $S_\lambda = {\rm top}(P_\lambda)$. This immediately yields the following corollary.

\begin{corollary}\label{Describing radical }
For each $\lambda \in \Lambda^{\bullet}(n,r)$, the submodule $N_{\lambda}$ of $P_\lambda$ is the radical of $P_\lambda$.
\end{corollary}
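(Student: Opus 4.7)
The plan is to derive the corollary directly from \cref{thm: Slambda is simple} together with standard facts about projective indecomposable modules over finite-dimensional algebras. First, I would observe that by \cref{thm: Slambda is simple} the quotient $S_\lambda = P_\lambda / N_\lambda$ is simple, which shows that $N_\lambda$ is a maximal $\mathbf{S}_0(n,r)$-submodule of $P_\lambda$. This already gives $\operatorname{rad}(P_\lambda) \subseteq N_\lambda$, since the Jacobson radical of a finite-dimensional module is the intersection of all its maximal submodules.

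For the reverse inclusion, I would invoke the well-known fact that a projective indecomposable module over a finite-dimensional algebra has a local endomorphism ring, and hence admits a unique maximal submodule, which necessarily coincides with its radical. Since $P_\lambda$ is projective indecomposable over $\mathbf{S}_0(n,r)$ by the construction recalled in \cref{thm: pim complete set}, this unique maximal submodule must be $N_\lambda$, yielding $N_\lambda \subseteq \operatorname{rad}(P_\lambda)$.

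Combining the two inclusions produces $N_\lambda = \operatorname{rad}(P_\lambda)$. There is essentially no obstacle: the substantive argument has already been absorbed into the proof of \cref{thm: Slambda is simple} via the generating lemmas (\cref{lem: for main thm} and \cref{lem: for main thm2}), which establish that $\overline{k_\lambda}$ can be recovered from any nonzero element of $S_\lambda$ by applying a suitable product of $e_i$'s. This corollary is then the expected consequence of the identification $S_\lambda = \operatorname{top}(P_\lambda)$ noted immediately before its statement.
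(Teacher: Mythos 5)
Your proposal is correct and takes essentially the same route as the paper, which derives the corollary in one line from \cref{thm: Slambda is simple} via the identification $S_\lambda = \operatorname{top}(P_\lambda)$; you have merely made explicit the standard facts (maximality of $N_\lambda$ from simplicity of the quotient, and uniqueness of the maximal submodule of a projective indecomposable) that the paper leaves implicit.
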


\section{Functorial Relations Among Module Categories}
\label{main result: 2}

As before, we fix nonnegative integers \( r \) and \( n \).
For a $\mathbb C$-algebra $A$, let $\text{$A$-\textsf{mod}}$ denote the category of finite-dimensional left $A$-modules.
In this section, we investigate 
functorial relations among three module categories 
$U_0(\mathfrak{gl}_n)$-\textsf{mod},  
$\mathbf{S}_0(n,r)$-\textsf{mod}, and $\mathbf{H}_r(0)$-\textsf{mod}.

\subsection{The $0$-Hecke algebra $\mathbf{H}_r(0)$ and its representation theory}\label{subsec: 0-Hecke alg and QSym}
The $0$-Hecke algebra $\mathbf{H}_r(0)$ is the associative $\C$-algebra with $1$ generated by the elements $\opi_1,\opi_2,\ldots,\opi_{r-1}$ subject to the following relations:
\begin{align}
\begin{aligned}\label{Rel: 0-Hecke algebra}
\opi_i^2 &= -\opi_i \quad \text{for $1\le i \le r-1$},\\
\opi_i \opi_{i+1} \opi_i &= \opi_{i+1} \opi_i \opi_{i+1}  \quad \text{for $1\le i \le r-2$},\\
\opi_i \opi_j &= \opi_j \opi_i \quad \text{if $|i-j| \ge 2$}.
\end{aligned}
\end{align}
Another set of generators consists of $\pi_i:= \opi_i + 1$ for $i=1,2,\ldots,r-1$ with the same relations as above except that $\pi_i^2 = \pi_i$.

For any reduced expression $s_{i_1} s_{i_2} \cdots s_{i_p}$ for $\sigma \in \SG_n$, let $\opi_{\sigma} := \opi_{i_1} \opi_{i_2} \cdots \opi_{i_p}$ and $\pi_{\sigma} := \pi_{i_1} \pi_{i_2 } \cdots \pi_{i_p}$.
It is well known that these elements are independent of the choices of reduced expressions, and both $\{\opi_\sigma \mid \sigma \in \SG_r\}$ and $\{\pi_\sigma \mid \sigma \in \SG_r\}$ are $\mathbb C$-bases for $\mathbf{H}_r(0)$.

According to \cite{79Norton}, there are $2^{r-1}$ pairwise inequivalent simple $\mathbf{H}_r(0)$-modules and $2^{r-1}$ pairwise inequivalent projective indecomposable $\mathbf{H}_r(0)$-modules, which are naturally indexed by strong compositions of $r$.
Let us explain this in more detail.
For $S \subseteq [n-1]$, let $\SG_{S}$ be the parabolic subgroup of $\SG_n$ generated by $\{s_i \mid i\in S\}$ and $w_0(S)$ the longest element in $\SG_{S}$. When $S=[n-1]$, we simply write $w_0$ for $w_0(S)$.
An element $w \in \SG_n$ can be written uniquely as $w=zu$, where $z\in \SG^S$ and $u \in \SG_S$, with the property that  $\ell(w)=\ell(z)+\ell(u)$.
Here $\SG^S:=\{z\in \SG_n \mid \operatorname{Des}(z) \subseteq S^{\rm c}\}$ is the set of minimal length representatives for left $\SG_S$-cosets, where $S^{\rm c}=[n-1]\setminus S$. 

\begin{lemma} {\rm (\cite[Theorem 6.2]{88BW})} \label{Bjorner and Wachs}
Given $S \subseteq T \subseteq [n-1]$,
the set $\{w\in \SG_n \mid S \subseteq \operatorname{Des}(w) \subseteq T\}$ is exactly the weak Bruhat interval $[w_0(S), w_1(T)]_L$,
where $w_0(S)$ is the longest element in $\SG_{S}$ and $w_1(T)$ is the longest element in $\SG^{{T}^{\rm c}}$.
\end{lemma}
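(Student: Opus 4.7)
The plan is to prove the set equality in two halves after first computing the right descent sets of the endpoints. For the endpoints, $\operatorname{Des}(w_0(S))=S$ is immediate from $w_0(S)$ being the longest element of the parabolic subgroup $\SG_S$, and $\operatorname{Des}(w_1(T))=T$ follows by writing $w_1(T)=w_0\cdot w_0(T^{\rm c})$ (the standard formula for the longest element of $\SG^{T^{\rm c}}$) and applying the length identity $\ell(w_0 x)=\ell(w_0)-\ell(x)$. The forward inclusion $[w_0(S),w_1(T)]_L\subseteq\{w:S\subseteq\operatorname{Des}(w)\subseteq T\}$ then reduces to a general monotonicity principle in left weak order: whenever $u\le_L v$, so $v=xu$ with $\ell(v)=\ell(x)+\ell(u)$, and $i\in\operatorname{Des}(u)$, one has $\ell(vs_i)\le\ell(x)+\ell(us_i)<\ell(v)$, hence $i\in\operatorname{Des}(v)$. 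Applying this once with $u=w_0(S)$, $v=w$ and once with $u=w$, $v=w_1(T)$ delivers the inclusion.

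For the reverse inclusion, I would handle the two weak-order inequalities separately. To see $w_0(S)\le_L w$ under the assumption $S\subseteq\operatorname{Des}(w)$, use the parabolic factorization $w=zu$ with $z\in\SG^S$ and $u\in\SG_S$. For each $i\in S$, the identity $ws_i=z(us_i)$ is still a parabolic factorization with additive lengths, so $i\in\operatorname{Des}(w)$ if and only if $i\in\operatorname{Des}(u)$; hence $\operatorname{Des}(u)=S$, which forces $u=w_0(S)$ as this is the unique element of $\SG_S$ with full descent set in $S$. Then $w=z\cdot w_0(S)$ with $\ell(w)=\ell(z)+\ell(w_0(S))$, which is precisely $w_0(S)\le_L w$.

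For the other inequality $w\le_L w_1(T)$ under the hypothesis $\operatorname{Des}(w)\subseteq T$ (equivalently $w\in\SG^{T^{\rm c}}$), the cleanest approach is to produce the witnessing left factor in one shot: set $y:=w_0\cdot w_0(T^{\rm c})\cdot w^{-1}$, so that $yw=w_0\cdot w_0(T^{\rm c})=w_1(T)$ by construction. To verify $\ell(w_1(T))=\ell(y)+\ell(w)$, use $w\in\SG^{T^{\rm c}}$ to get $\ell(w\cdot w_0(T^{\rm c}))=\ell(w)+\ell(w_0(T^{\rm c}))$ from the parabolic factorization; inverting yields $\ell(w_0(T^{\rm c})w^{-1})=\ell(w)+\ell(w_0(T^{\rm c}))$, and then $\ell(y)=\ell(w_0)-\ell(w_0(T^{\rm c})w^{-1})=\ell(w_1(T))-\ell(w)$, as required. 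The main obstacle lies in this last step: a direct inductive approach (producing, at each stage, a simple reflection $s_i$ with $s_iw\in\SG^{T^{\rm c}}$ and $\ell(s_iw)=\ell(w)+1$) is possible but awkward, and the one-shot factorization through $w_0\cdot w_0(T^{\rm c})$ is the conceptually cleanest route.
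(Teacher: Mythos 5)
Your proof is correct. Note that the paper offers no proof of this lemma at all: it is quoted verbatim from Bj\"orner--Wachs \cite[Theorem 6.2]{88BW}, so there is no internal argument to compare against. Checking your steps: the endpoint computations are right ($\operatorname{Des}(w_0(S))=S$ since elements of $\SG_S$ have descents inside $S$, and $\operatorname{Des}(w_1(T))=T$ via $w_1(T)=w_0\,w_0(T^{\rm c})$ and $\ell(w_0x)=\ell(w_0)-\ell(x)$); the monotonicity of right descent sets along left weak order is exactly the inequality $\ell(vs_i)\le \ell(x)+\ell(us_i)$ you wrote, and it yields the forward inclusion in one stroke; in the reverse inclusion, the parabolic factorization $w=zu$ with $\ell(ws_i)-\ell(w)=\ell(us_i)-\ell(u)$ for $i\in S$ correctly forces $\operatorname{Des}(u)=S$ and hence $u=w_0(S)$ (the one fact you state without proof -- that the longest element is the unique element of a finite Coxeter group with full descent set -- is standard, and follows in one line from $\ell(u^{-1}w_0(S))=\ell(w_0(S))-\ell(u)$, whose first letter gives an ascent of $u$ whenever $u\ne w_0(S)$); and the length bookkeeping in your ``one-shot'' factorization $y=w_0\,w_0(T^{\rm c})\,w^{-1}$ is valid, using additivity of lengths for $w\in\SG^{T^{\rm c}}$ times $\SG_{T^{\rm c}}$, invariance of length under inversion, and $\ell(w_0x)=\ell(w_0)-\ell(x)$ again. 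For comparison, Bj\"orner and Wachs prove this in the generality of arbitrary Coxeter groups (where descent classes need not have a maximum) through their machinery of generalized quotients, whereas your argument is a direct, self-contained verification that exploits finiteness through the longest element $w_0$; for the symmetric group setting actually used in the paper, your route is both sufficient and appreciably more elementary, and it in fact works verbatim for any finite Coxeter group.
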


Let $\alpha$ be a strong composition of $r$. The simple $\mathbf{H}_r(0)$-module
corresponding to $\alpha$ 
is given by $F_{\alpha}:=\mathbf{H}_r(0) \opi_{w_1(\set(\alpha)^{\rmr})}\pi_{w_0(\set(\alpha)^\rmt)}$. 
This module is one-dimensional, with basis element
$
v_\alpha := \opi_{w_1(\set(\alpha)^{\rm r})}\pi_{w_0(\set(\alpha)^{\rm t})}.
$
The action of $\mathbf{H}_r(0)$ on $v_\alpha$ is given, for each $i \in [r-1]$, by
$$
\opi_i(v_\alpha) =
\begin{cases}
- v_\alpha, & i \in \set(\alpha),\\
0, & i \notin \set(\alpha).
\end{cases}
$$
And, the projective indecomposable $\mathbf{H}_r(0)$-module corresponding to $\alpha$
is given by the submodule 
$R_\alpha := \mathbf{H}_r(0) \opi_{w_0(\set(\alpha))}\pi_{w_0(\set(\alpha)^\rmc)} $ of the regular representation of $\mathbf{H}_r(0)$.
Then it holds that 
$\operatorname{top}(R_\alpha) \cong F_\alpha.$

For later use, we introduce the two involutions $\autophi, \autotheta$ and the anti-involution $\autochi$ of $\mathbf{H}_r(0)$ introduced by Fayers in \cite{05Fayers}, defined as follows:
\begin{equation}\label{automorphism of Fayers}
\begin{aligned}
&\autophi: H_r(0) \ra H_r(0), \quad \pi_i \mapsto \pi_{n-i} \quad \text{for $1 \le i \le r-1$},\\
&\autotheta: H_r(0) \ra H_r(0), \quad \pi_i \mapsto - \opi_i \quad \text{for $1 \le i \le r-1$}, \\
&\autochi: H_r(0) \ra H_r(0), \quad \pi_i \mapsto \pi_i \quad \text{for $1 \le i \le r-1$}.
\end{aligned}
\end{equation}
These morphisms commute with each other.

In the rest of this subsection, we explain the relation between ${\mathbf H}_r(0)$ and  $\mathbf{S}_0(n, r)$. 
To begin with, let 
\begin{align*}
\mathbf{H}_r(q) &:= H_r(q) \otimes_{\mathbb{Z}[q]} \mathbb{C}[q],\\
\mathbf{S}_q(n, r) &:= S_q(n, r) \otimes_{\mathbb{Z}[q]} \mathbb{C}[q].
\end{align*}
Let $V_q := {\mathbb{C}[q]}^n$, and let $(\xi_i)_{1 \leq i \leq n}$ denote the standard basis for $V_q$.
There is a right action of $\mathbf{H}_r(q)$ on the tensor space ${V_q}^{\otimes r}$ defined by
\begin{align}\label{eq: Hecke right action on V^r}
\mathbf{v} \cdot T_i =
\begin{cases}
\mathbf{v}^{\sigma_i} & \text{if } k_i < k_{i+1}, \\
q\mathbf{v} & \text{if } k_i = k_{i+1}, \\
q \mathbf{v}^{\sigma_i} + (q - 1) \mathbf{v} & \text{if } k_i > k_{i+1},
\end{cases}
\end{align}
where $\mathbf{v} = \xi_{k_1} \otimes \cdots \otimes \xi_{k_r} \in V^{\otimes r}$, and $\mathbf{v}^{\sigma_i}$ denotes the tensor obtained from $\mathbf{v}$ by interchanging the $i$-th and $(i+1)$-st tensor factors.
Under this action, there is a right $\mathbf{H}_r(q)$-module isomorphism 
\begin{equation}\label{isomorphism phi q}
\phi_q: \bigoplus_{\lambda \in \Lambda(n, r)} x_\lambda {\mathbf H}_r(q) \stackrel{\cong}{\longrightarrow} {V_q}^{\otimes r}
\end{equation}
(for instance, see \cite[Corollary 3.1.5]{DD91}).
Since  
\begin{align*}
\operatorname{End}_{H_r(q)}\left (\bigoplus_{\lambda \in \Lambda(n, r)} x_\lambda  H_r(q)\right) \otimes_{\mathbb Z[q]} \mathbb C[q]  
\cong \operatorname{End}_{{\mathbf H}_r(q)}\left (\bigoplus_{\lambda \in \Lambda(n, r)} x_\lambda {\mathbf H}_r(q)\right),
\end{align*}
the isomorphism in \cref{isomorphism phi q} induces a $\mathbb C[q]$-algebra isomorphism 
\begin{align*}
\mathbf{S}_q(n, r)\cong \operatorname{End}_{{\mathbf H}_r(q)}\left ({V_q}^{\otimes r}\right).
\end{align*}

Let $V_0 := {\mathbb{C}}^n$, and let $({\bar \xi}_i)_{1 \leq i \leq n}$ denote the standard basis for $V_0$.
There is a right action of $\mathbf{H}_r(0)$ on the tensor space ${V_0}^{\otimes r}$ defined by
\begin{align}\label{eq: Hecke right action on V^r q=0}
\mathbf{v} \cdot {\bar \pi}_i =
\begin{cases}
\mathbf{v}^{\sigma_i} & \text{if } k_i < k_{i+1}, \\
0 & \text{if } k_i = k_{i+1}, \\
-\mathbf{v} & \text{if } k_i > k_{i+1},
\end{cases}
\end{align}
where $\mathbf{v} = {\bar \xi}_{k_1} \otimes \cdots \otimes {\bar \xi}_{k_r} \in V^{\otimes r}$, and $\mathbf{v}^{\sigma_i}$ denotes the tensor obtained from $\mathbf{v}$ by interchanging the $i$-th and $(i+1)$-st tensor factors.
For $\lambda \in \Lambda(n, r)$, let 
$${\bar x}_{\lambda} := \sum_{w \in \SG_{\lambda}} {\bar \pi}_w \in {\mathbf H}_r(0).$$
It follows from \cite[Lemma 4.3]{79Norton}
that ${\bar x}_{\lambda}=\pi_{w_0(\set(\lambda))}$.
Tensoring with $\mathbb C[q]/(q)$ over $\mathbb C[q]$ is right exact, the isomorphism $\phi_q$ induces a right $\mathbf{H}_r(0)$-module isomorphism 
\begin{equation}\label{isomorphism phi 0}
\phi_0:=\phi_q \otimes \operatorname{id}: \bigoplus_{\lambda \in \Lambda(n, r)} {\bar x}_\lambda {\mathbf H}_r(0) \stackrel{\cong}{\longrightarrow} {V_0}^{\otimes r}.
\end{equation}
Combining \cref{isomorphism phi 0} with the isomorphism 
\begin{align*}
{\mathbf S}_0(n, r) 
&\cong \operatorname{End}_{{\mathbf H}_r(0)}\left (\bigoplus_{\lambda \in \Lambda(n, r)} {\bar x}_\lambda {\mathbf H}_r(0)\right) \quad \text{ (by \cite[Remark 2.5]{DY12})}
\end{align*}  
yields a $\mathbb C$-algebra isomorphism 
\begin{equation}\label{Realizing Schur algebra as endomorphism algebra}    
\mathbf{S}_0(n, r)\cong \operatorname{End}_{{\mathbf H}_r(0)}\left ({V_0}^{\otimes r}\right).
\end{equation}

\subsection{The degenerate quantum group $U_0(\mathfrak{gl}_n)$ and its polynomial modules}

In~\cite{KT5}, Krob and Thibon introduced a degenerate analogue of the quantum group $U_q(\mathfrak{gl}_n)$, denoted $U_0(\mathfrak{gl}_n)$, which was further investigated by Hivert~\cite{00Hivert} and shown to arise as a specialization of the two-parameter quantum enveloping algebra defined by Takeuchi~\cite{90Takeuchi}.

\begin{definition}{\rm (\cite{KT5})}
The \emph{degenerate quantum group} $U_0(\mathfrak{gl}_n)$ is the unital $\mathbb{C}$-algebra generated by elements $\{ \mathbf{e}_i, \mathbf{f}_i \mid 1 \leq i \leq n-1 \}$ and $\{ \mathbf{k}_i \mid 1 \leq i \leq n \}$, subject to the following relations:
\begin{align*}
\mathbf{k}_i \mathbf{k}_j &= \mathbf{k}_j \mathbf{k}_i & &\text{for } 1 \leq i, j \leq n, \\
\mathbf{e}_{i-1} \mathbf{k}_i &= 0 & &\text{for } 2 \leq i \leq n-1, \\
\mathbf{k}_i \mathbf{e}_i &= 0 & &\text{for } 1 \leq i \leq n-1, \\
\mathbf{k}_i \mathbf{e}_j &= \mathbf{e}_j \mathbf{k}_i & &\text{for } j \neq i-1, i, \\
\mathbf{k}_i \mathbf{f}_{i-1} &= 0 & &\text{for } 2 \leq i \leq n-1, \\
\mathbf{f}_i \mathbf{k}_i &= 0 & &\text{for } 1 \leq i \leq n-1, \\
\mathbf{k}_i \mathbf{f}_j &= \mathbf{f}_j \mathbf{k}_i & &\text{for } j \neq i-1, i, \\
[\mathbf{e}_i, \mathbf{f}_j] &= \delta_{ij}(\mathbf{k}_{i+1} - \mathbf{k}_i) & &\text{for } 1 \leq i, j \leq n-1, \\
\mathbf{e}_i^2 \mathbf{e}_{i+1} - \mathbf{e}_i \mathbf{e}_{i+1} \mathbf{e}_i &= 0 & &\text{for } 1 \leq i \leq n-2, \\
\mathbf{e}_{i+1} \mathbf{e}_i^2 - \mathbf{e}_i \mathbf{e}_{i+1} \mathbf{e}_i &= 0 & &\text{for } 1 \leq i \leq n-2, \\
\mathbf{f}_{i+1} \mathbf{f}_i^2 - \mathbf{f}_i \mathbf{f}_{i+1} \mathbf{f}_i &= 0 & &\text{for } 1 \leq i \leq n-2, \\
\mathbf{f}_i^2 \mathbf{f}_{i+1} - \mathbf{f}_i \mathbf{f}_{i+1} \mathbf{f}_i &= 0 & &\text{for } 1 \leq i \leq n-2, \\
[\mathbf{e}_i, \mathbf{e}_j] &= 0 & &\text{for } |i - j| > 1, \\
[\mathbf{f}_i, \mathbf{f}_j] &= 0 & &\text{for } |i - j| > 1, \\
\mathbf{e}_i \mathbf{e}_{i+1} \mathbf{k}_i &= \mathbf{k}_{i+1} \mathbf{e}_i \mathbf{e}_{i+1} & &\text{for } 1 \leq i \leq n-2, \\
\mathbf{f}_{i+1} \mathbf{f}_i \mathbf{k}_{i+1} &= \mathbf{k}_i \mathbf{f}_{i+1} \mathbf{f}_i & &\text{for } 1 \leq i \leq n-2.
\end{align*}

The degenerate quantum group $U_0(\mathfrak{gl}_n)$ admits a bialgebra structure given by the comultiplication $\Delta$ and counit $\epsilon$, defined on the generators by
\begin{align*}
\Delta(\mathbf{e}_i) &= 1 \otimes \mathbf{e}_i + \mathbf{e}_i \otimes \mathbf{k}_i, & \epsilon(\mathbf{e}_i) &= 0, \\
\Delta(\mathbf{f}_i) &= \mathbf{k}_{i+1} \otimes \mathbf{f}_i + \mathbf{f}_i \otimes 1, & \epsilon(\mathbf{f}_i) &= 0, \\
\Delta(\mathbf{k}_i) &= \mathbf{k}_i \otimes \mathbf{k}_i, & \epsilon(\mathbf{k}_i) &= 1,
\end{align*}
for all $1 \leq i \leq n-1$ (and $1 \leq i \leq n$ in the case of $\mathbf{k}_i$).
\end{definition}

As before, let $({\bar \xi}_i)_{1 \leq i \leq n}$ denote the standard basis for the complex vector space $V_0 = \mathbb{C}^n$. 
For each $1 \leq i, j \leq n$, define the linear operator $E_{ij} \in \operatorname{End}_{\mathbb{C}}(V_0)$ by
\[
E_{ij}({\bar \xi}_k) = \delta_{jk} {\bar \xi}_i.
\]
These operators yield a representation of the quantum enveloping algebra $U_0(\mathfrak{gl}_n)$ via an algebra homomorphism
\[
\rho_{V_0} : U_0(\mathfrak{gl}_n) \longrightarrow \operatorname{End}_{\mathbb{C}}(V_0)
\]
given explicitly by
\[
\begin{aligned}
\rho_V(\mathbf{e}_i) &= E_{i,i+1} & &\text{for } 1 \leq i \leq n-1, \\
\rho_V(\mathbf{f}_i) &= E_{i+1,i} & &\text{for } 1 \leq i \leq n-1, \\
\rho_V(\mathbf{k}_i) &= \sum_{j \neq i} E_{j,j} & &\text{for } 1 \leq i \leq n.
\end{aligned}
\]

We briefly review the results on polynomial $U_0(\mathfrak{gl}_n)$-modules presented in \cite{KT4}.
The representation $\rho_V$ of $U_0(\mathfrak{gl}_n)$ on $V_0$ naturally extends to a $U_0(\mathfrak{gl}_n)$-module structure on the tensor space ${V_0}^{\otimes r}$ via the comultiplication.
The actions of $U_0(\mathfrak{gl}_n)$ and the $0$-Hecke algebra ${\mathbf H}_r(0)$ on ${V_0}^{\otimes r}$, as given in \cref{eq: Hecke right action on V^r}, commute with each other.
Consequently, ${V_0}^{\otimes r}$ admits the structure of a $(U_0(\mathfrak{gl}_n), {\mathbf H}_r(0))$-bimodule.
Therefore we derive a functor 
\begin{equation}\label{functor from H to U}
F_{n,r} : \text{${\mathbf H}_r(0)$-\textsf{mod}} \to \text{$U_0(\mathfrak{gl}_n)$-\textsf{mod}}, \quad M \mapsto {V_0}^{\otimes r} \otimes_{{\mathbf H}_r(0)} M.
\end{equation}

\begin{remark}
In contrast to the classical Schur--Weyl duality, the double commutant property fails for the $(U_0(\mathfrak{gl}_n), {\mathbf H}_r(0))$-bimodule ${V_0}^{\otimes r}$ (see \cite[Note~5.7]{KT5}).
\end{remark}

We call a $U_0(\mathfrak{gl}_n)$-submodule of ${V_0}^{\otimes r}$ a {\it polynomial $U_0(\mathfrak{gl}_n)$-module of degree $r$}.
For each $\alpha \in \Lambda^+(r)$, let  
$$
D_{\alpha} := V_0^{\otimes r} \otimes_{{\mathbf H}_r(0)} F_{\alpha}, \qquad 
N_{\alpha} := V_0^{\otimes r} \otimes_{{\mathbf H}_r(0)} R_{\alpha}.
$$

The module $D_{\alpha}$ admits a basis indexed by certain combinatorial objects called quasi-ribbon tableaux.
Let $n$ be a positive integer and $\alpha \in \Lambda^+(r)$. A \emph{quasi-ribbon tableau} of shape $\alpha$ is a filling of the ribbon diagram of shape $\alpha$ with entries in the alphabet $\{1, 2, \ldots, n\}$ such that the entries weakly increase from left to right in each row and strictly increase from top to bottom in each column. Let ${\rm QRT}(n, \alpha)$ denote the set of quasi-ribbon tableaux of shape $\alpha$ with entries in $\{1, 2, \ldots, n\}$.

\begin{remark} \label{When D alpha=0 ?}
By definition, $\operatorname{QRT}(n, \alpha) = \varnothing$ if 
$\ell(\alpha) > n$, and is a singleton if  $\ell(\alpha) = n$.
\end{remark}

For $T \in {\rm QRT}(n, \alpha)$, define $\mathsf{read}(T)$ to be the word obtained by reading the entries of $T$ column by column, from bottom to top and from left to right. A word $w$ over $\{1, 2, \ldots, n\}$ is called a \emph{quasi-ribbon word} of shape $\alpha$ if $w = \mathsf{read}(T)$ for some $T \in {\rm QRT}(n, \alpha)$. Let ${\rm QRW}(n, \alpha)$ denote the set of quasi-ribbon words of shape $\alpha$. 
The sets ${\rm QRT}(n, \alpha)$ and ${\rm QRW}(n, \alpha)$ are naturally in bijection via the map
\[
\mathsf{read} : {\rm QRT}(n, \alpha) \to {\rm QRW}(n, \alpha),
\]
whose inverse is denoted by
\[
\mathsf{tab} : {\rm QRW}(n, \alpha) \to {\rm QRT}(n, \alpha).
\]
Here, $\mathsf{tab}(w)$ is defined by placing the letters of $w$ into the shape $\alpha$ column by column, from bottom to top and from left to right.
% For example, the word $u = 112213312$ is not a quasi-ribbon word, as its decreasing factorization does not correspond to any quasi-ribbon tableau. In contrast, the word $v = 113213434$ is a quasi-ribbon word of shape $(3,1,3,2)$, corresponding to the tableau
% \[
% \begin{ytableau}
% 1 & 1 & 1 \\
% \none & \none  & 2 \\
% \none & \none & 3 & 3 & 3 \\
% \none & \none  & \none & \none & 4 & 4
% \end{ytableau}
% \]
For example, the word $u = 112213312$ is not a quasi-ribbon word, as its decreasing factorization does not correspond to any quasi-ribbon tableau. In contrast, the word $v = 153216767$ is a quasi-ribbon word of shape $(2,1,1,3,2)$, corresponding to the tableau
\[
\begin{ytableau}
1 & 1  \\
\none & 2 \\
\none & 3 \\
\none & 5 & 6 & 6 \\
\none  & \none & \none & 7 & 7
\end{ytableau}
\]

Following~\cite{KT5}, we define a $U_0(\mathfrak{gl}_n)$-module structure on the $\mathbb{C}$-span of ${\rm QRT}(n, \alpha)$ using Kashiwara crystal operators $\tilde{e}_i$ and $\tilde{f}_i$, defined on words in $\{1, 2, \ldots, n\}$ as follows.
Given $w \in {\rm QRW}(n, \alpha)$, let $w^{(i)}$ denote the subword consisting of the letters $i$ and $i+1$, in their original order. Perform the cancellation process that removes every adjacent pair $(i+1)i$ from $w^{(i)}$, proceeding from left to right. The result is a reduced word of the form $i^r(i+1)^s$ for some $r,s \ge 0$. 
The operator $\tilde{e}_i$ changes the leftmost occurrence of $i+1$ in the reduced subword to $i$, while $\tilde{f}_i$ changes the rightmost $i$ to $i+1$. If no such letter exists (i.e., if $s = 0$ for $\tilde{e}_i$ or $r = 0$ for $\tilde{f}_i$), the result is defined to be zero.
The induced $U_0(\mathfrak{gl}_n)$-action on ${\rm QRT}(n,\alpha)$ is given by:
\[
\mathbf{f}_i \cdot T = 
\begin{cases} 
\mathsf{tab}(\tilde{f}_i(\mathsf{read}(T))) & \text{if } \tilde{f}_i(\mathsf{read}(T)) \in {\rm QRW}(n, \alpha), \\
0 & \text{otherwise},
\end{cases}
\]
and similarly,
\[
\mathbf{e}_i \cdot T = 
\begin{cases} 
\mathsf{tab}(\tilde{e}_i(\mathsf{read}(T))) & \text{if } \tilde{e}_i(\mathsf{read}(T)) \in {\rm QRW}(n, \alpha), \\
0 & \text{otherwise}.
\end{cases}
\]
The action of $\mathbf{k}_i$ is defined by
\begin{align}\label{eq: k action on QR}
\mathbf{k}_i \cdot T = 
\begin{cases}
T & \text{if $T$ has no entry equal to $i$}, \\
0 & \text{otherwise}.
\end{cases}
\end{align}
This equips $\mathbb{C}[\mathrm{QRT}(n, \alpha)]$, the $\mathbb{C}$-span of $\mathrm{QRT}(n, \alpha)$, with the structure of a $U_0(\mathfrak{gl}_n)$-module.

\begin{proposition}{\rm (\cite[Proposition 7.1]{KT5})}  \label{realization of D(alpha)} 
Let $\alpha \in \Lambda^+(r)$.
With the above $U_0(\mathfrak{gl}_n)$-action, 
$\mathbb{C} [{\rm QRT}(n, \alpha)]$ is isomorphic to $D_{\alpha}$ as $U_0(\mathfrak{gl}_n)$-modules.
\end{proposition}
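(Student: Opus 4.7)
The plan is to construct an explicit $\mathbb{C}$-linear map
\[
\mPhi\colon \mathbb{C}[\mathrm{QRT}(n,\alpha)]\longrightarrow D_\alpha,\qquad T\mapsto \mathbf{v}(T)\otimes v_\alpha,
\]
where $\mathbf{v}(T):=\bar\xi_{k_1}\otimes\cdots\otimes\bar\xi_{k_r}$ for the reading word $\mathsf{read}(T)=k_1\cdots k_r$, and then to verify that $\mPhi$ is an isomorphism of $U_0(\gl_n)$-modules. Since $F_\alpha$ is one-dimensional, $D_\alpha=V_0^{\otimes r}/(V_0^{\otimes r}\cdot I_\alpha)$ where the left annihilator $I_\alpha$ of $v_\alpha$ in $\mathbf{H}_r(0)$ is generated (in view of the explicit $\opi_i$-action on $v_\alpha$) by $\{\opi_i:i\notin\mathrm{set}(\alpha)\}\cup\{\pi_i:i\in\mathrm{set}(\alpha)\}$.

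I would first establish surjectivity by a relation-chasing argument. Translating the generators of $I_\alpha$ through the right action of \cref{eq: Hecke right action on V^r q=0} yields, for every basis word $\mathbf{v}(w)$ of $V_0^{\otimes r}$, the identities
\begin{enumerate}[label={\rm(\alph*)}]
\item $\mathbf{v}(w)\otimes v_\alpha=0$ if $w$ has a descent at some $i\notin\mathrm{set}(\alpha)$,
\item $\mathbf{v}(w)\otimes v_\alpha=0$ if $k_i=k_{i+1}$ for some $i\in\mathrm{set}(\alpha)$,
\item $\mathbf{v}(w^{\sigma_i})\otimes v_\alpha=-\mathbf{v}(w)\otimes v_\alpha$ if $k_i<k_{i+1}$ for some $i\in\mathrm{set}(\alpha)$.
\end{enumerate}
Iterating the swap relation (c), every basis word either reduces to zero (when some intermediate step produces a word violating (a) or (b)) or, up to sign, to a word $w'$ with $k'_i\le k'_{i+1}$ at every $i\notin\mathrm{set}(\alpha)$ and $k'_i>k'_{i+1}$ at every $i\in\mathrm{set}(\alpha)$---precisely a quasi-ribbon word of shape $\alpha$. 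For the dimension count $\dim D_\alpha=|\mathrm{QRT}(n,\alpha)|$ (and hence injectivity), I would invoke the right $\mathbf{H}_r(0)$-module decomposition $V_0^{\otimes r}\cong\bigoplus_{\lambda\in\Lambda(n,r)}{\bar x}_\lambda\mathbf{H}_r(0)$ from \cref{isomorphism phi 0} to obtain
\[
D_\alpha\cong\bigoplus_{\lambda\in\Lambda(n,r)}{\bar x}_\lambda\mathbf{H}_r(0)\otimes_{\mathbf{H}_r(0)}F_\alpha;
\]
each summand is at most one-dimensional, and a careful analysis (using the explicit form of ${\bar x}_\lambda$ together with the $\mathbf{H}_r(0)$-action on $v_\alpha$) identifies the nonzero summands bijectively with $\mathrm{QRT}(n,\alpha)$.

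The remaining and most delicate step will be to verify the $U_0(\gl_n)$-equivariance of $\mPhi$. Using the iterated coproduct, $\mathbf{e}_i$ acts on a basis word by
\[
\mathbf{e}_i\cdot\mathbf{v}(w)=\sum_{j\,:\,k_j=i+1,\;k_\ell\neq i\,\forall\ell>j}\mathbf{v}(w^{(j)}),
\]
where $w^{(j)}$ denotes $w$ with the $j$th letter replaced by $i$; $\mathbf{f}_i$ acts by a symmetric formula and $\mathbf{k}_i$ acts diagonally, matching \cref{eq: k action on QR}. The hard part will be showing that when $w$ is a quasi-ribbon word, the projection of this sum to $D_\alpha$ collapses to the single term $\mathbf{v}(\tilde e_i w)\otimes v_\alpha$: the remaining $w^{(j)}$ either fail (a) or (b) and so vanish, or cancel pairwise via (c). This cancellation mirrors the classical pairing of $(i{+}1)i$ subwords in the crystal algorithm, and a position-by-position case analysis keyed to $\mathrm{set}(\alpha)$ completes the verification; the argument for $\mathbf{f}_i$ is symmetric.
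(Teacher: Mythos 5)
Your overall architecture (the explicit map $T\mapsto \mathbf{v}(T)\otimes v_\alpha$, the relations (a)--(c) extracted from the one-dimensionality of $F_\alpha$, and a dimension count via $D_\alpha\cong\bigoplus_\lambda \bar x_\lambda\mathbf{H}_r(0)\otimes_{\mathbf{H}_r(0)}F_\alpha$) is reasonable; note that the paper itself gives no proof of this proposition --- it is quoted from Krob--Thibon \cite[Proposition 7.1]{KT5}, whose argument runs along broadly similar lines. However, your reduction step rests on a concretely false identification. A word $w'$ with $k'_i\le k'_{i+1}$ at every $i\notin\mathrm{set}(\alpha)$ and $k'_i>k'_{i+1}$ at every $i\in\mathrm{set}(\alpha)$ --- i.e., with descent set exactly $\mathrm{set}(\alpha)$ --- is in general \emph{not} a quasi-ribbon word of shape $\alpha$. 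Take $\alpha=(2,2)$, so $\mathrm{set}(\alpha)=\{2\}$, and $w'=1312$: the descent pattern is correct, but the filling produced by the $\mathsf{tab}$ procedure has first row $(1,1)$ and second row $(3,2)$, violating weak increase along the second row, so $w'\notin\mathrm{QRW}(n,\alpha)$. In fact $\mathbf{v}(1312)\otimes v_\alpha=0$ in $D_{(2,2)}$: by your relation (c) it equals $-\mathbf{v}(1132)\otimes v_\alpha$, and $1132$ has a descent at position $3\notin\mathrm{set}(\alpha)$, so it dies by (a). A count shows the size of the discrepancy: for $n=3$ and $\alpha=(2,2)$ there are $21$ words with descent set exactly $\{2\}$, while $\dim D_\alpha=|\mathrm{QRT}(3,(2,2))|=5$.

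Consequently your reduction terminates at ``normal forms'' forming a strictly larger set than $\mathrm{QRW}(n,\alpha)$, and nothing in your argument shows that the non-quasi-ribbon normal forms vanish in $D_\alpha$; surjectivity of your map is therefore not established, since spanning by descent-correct words together with the dimension count does not imply spanning by the quasi-ribbon subset (the quasi-ribbon images could a priori be dependent while some non-quasi-ribbon class survives). The missing lemma --- that any descent-correct word failing the quasi-ribbon row condition can be carried by (c)-swaps to a word killed by (a) or (b), as in the $1312\mapsto 1132$ example --- is precisely the combinatorial heart of Krob--Thibon's proof, and the same unproved cancellation analysis is what you defer in the equivariance step for $\mathbf{e}_i$ and $\mathbf{f}_i$ (``cancel pairwise via (c)''), which is asserted rather than verified. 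On the positive side, your relations (a)--(c) are correct, and your dimension count is sound: $\bar x_\lambda$ is idempotent, each summand $\bar x_\lambda F_\alpha$ is at most one-dimensional, it is nonzero exactly when $\mathrm{set}(\alpha)$ is contained in the partial-sum set of $\lambda$, and these $\lambda$ are in bijection with $\mathrm{QRT}(n,\alpha)$ exactly as in \cref{lem: wt bijection}. So the plan is repairable, but only after the vanishing lemma and the crystal-collapse computation are actually supplied.
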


From \cref{When D alpha=0 ?}, it follows that \(D_\alpha = 0\) if \(\ell(\alpha) > n\), and \(D_\alpha\) is a one-dimensional trivial \(U_0(\mathfrak{gl}_n)\)-module if \(\ell(\alpha) = n\).

\begin{proposition}{\rm (cf. \cite[Proposition 7.3]{KT5})}
\label{Krob thibon classification}
Let $n,r$ be nonnegative integers.
Then we have

\begin{enumerate}[label = {\rm (\arabic*)}]
    \item 
Every simple polynomial $U_0(\mathfrak{gl}_n)$-module of degree $r$ is isomorphic to  $D_{\alpha}$ for some $\alpha \in  \Lambda^+(r)$ with $\ell(\alpha)\le n$.

    \item Every indecomposable polynomial $U_0(\mathfrak{gl}_n)$-module of degree $r$ is isomorphic to  $N_{\alpha}$ for some $\alpha \in  \Lambda^+(r)$.
\end{enumerate}
\end{proposition}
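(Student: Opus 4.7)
The plan is to transport the classifications of simple and projective indecomposable $\mathbf S_0(n,r)$-modules obtained in \cref{main result 1} to the category $\mathcal C$ of polynomial $U_0(\mathfrak{gl}_n)$-modules of degree $r$ via the functor $\xi_{n,r}$ of \cref{polynomial modules are Schur modules}. A key preliminary observation is that, since $\Psi_{n,r}$ is restriction along the algebra map $U_0(\mathfrak{gl}_n)\to \mathbf S_0(n,r)$, it is exact and does not annihilate any nonzero module. Combined with the identity $\Psi_{n,r}\circ \xi_{n,r}=\mathrm{id}_{\mathcal C}$, this forces $\xi_{n,r}$ to preserve simplicity, indecomposability, and isomorphism classes: any nontrivial decomposition or composition series of $\xi_{n,r}(M)$ would descend under $\Psi_{n,r}$ to a nontrivial decomposition or composition series of $M$ itself.

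For part (1), let $L$ be a simple polynomial module of degree $r$. Then $\xi_{n,r}(L)$ is simple over $\mathbf S_0(n,r)$, so by \cref{thm: Slambda is simple} there exists $\lambda\in \Lambda^{\bullet}(n,r)$ with $\xi_{n,r}(L)\cong S_\lambda$. Applying $\Psi_{n,r}$ and invoking \cref{thm: 0-Schur functor degenerate quantum group} yields
$$
L\cong \Psi_{n,r}(\xi_{n,r}(L))\cong \Psi_{n,r}(S_\lambda)\cong D_{\lambda^+},
$$
with $\ell(\lambda^+)\le n$ by the bijection of \cref{relation between weak and strong compositions}.

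For part (2), let $M$ be an indecomposable polynomial module of degree $r$. Then $\xi_{n,r}(M)$ is an indecomposable $\mathbf S_0(n,r)$-module. The decisive step is to show that $\xi_{n,r}(M)$ is projective: once this is established, \cref{thm: pim complete set} forces $\xi_{n,r}(M)\cong P_\lambda$ for some $\lambda\in \Lambda^{\bullet}(n,r)$, and \cref{thm: 0-Schur functor degenerate quantum group PIM} then gives $M\cong \Psi_{n,r}(P_\lambda)\cong N_{\lambda^+}$. For the projectivity, I would first observe that $V_0^{\otimes r}$ is a projective right $\mathbf H_r(0)$-module because by \cref{isomorphism phi 0} it decomposes as a direct sum of right ideals $\pi_{w_0(\set(\lambda))}\mathbf H_r(0)$ generated by idempotents, and hence, via the identification $\mathbf S_0(n,r)=\operatorname{End}_{\mathbf H_r(0)}(V_0^{\otimes r})$, it is also a projective left $\mathbf S_0(n,r)$-module. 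I would then argue that every polynomial module is in fact a direct summand of $V_0^{\otimes r}$ as a $U_0(\mathfrak{gl}_n)$-module, so that $\xi_{n,r}(M)$ is a direct summand of $\xi_{n,r}(V_0^{\otimes r})\cong V_0^{\otimes r}$ as an $\mathbf S_0(n,r)$-module and is therefore projective.

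The main obstacle is precisely the last claim: proving that an indecomposable polynomial module is necessarily a direct summand of $V_0^{\otimes r}$, not merely a submodule. This is where purely abstract functorial reasoning no longer suffices, and one must invoke the explicit bimodule decomposition $V_0^{\otimes r}\cong \bigoplus_\alpha N_\alpha^{\oplus m_\alpha}$ originally obtained by Krob--Thibon. Combined with the Krull--Schmidt theorem in $\mathcal C$, this decomposition ensures that any indecomposable polynomial module must be isomorphic to one of the indecomposable summands $N_\alpha$, which completes the argument.
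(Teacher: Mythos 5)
The first thing to note is that the paper does not prove \cref{Krob thibon classification} at all: it is imported from Krob--Thibon (hence the ``cf.''\ citation of \cite[Proposition~7.3]{KT5}) and is then \emph{used} downstream, for instance to deduce that $G_{n,r}(F_\alpha)$ is simple and $G_{n,r}(R_\alpha)$ is indecomposable. Measured against that, your part (1) is actually a respectable self-contained internal derivation: the functor $\xi_{n,r}$ of \cref{polynomial modules are Schur modules}(1), the observation that restriction along $\psi_{n,r}$ is exact and annihilates no nonzero module, \cref{thm: Slambda is simple}, and \cref{thm: 0-Schur functor degenerate quantum group} are all established in the paper independently of \cref{Krob thibon classification}, so the chain $L\cong \Psi_{n,r}(\xi_{n,r}(L))\cong \Psi_{n,r}(S_\lambda)\cong D_{\lambda^+}$ is non-circular and proves (1).

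Part (2), however, has genuine gaps. First, the projectivity transfer fails: projectivity of $V_0^{\otimes r}$ as a right $\mathbf H_r(0)$-module does not make it projective over $\operatorname{End}_{\mathbf H_r(0)}(V_0^{\otimes r})$ unless $V_0^{\otimes r}$ is a progenerator, i.e.\ $n\ge r$; indeed, for $\ell(\alpha)>n$ the direct $\mathbf S_0(n,r)$-summand $V_0^{\otimes r}e_\alpha\cong G_{n,r}(R_\alpha)$ of $V_0^{\otimes r}$ is \emph{not} projective, as the paper itself records in \cref{sec: Further remark}(2), so the left module ${V_0}^{\otimes r}$ is not projective over $\mathbf S_0(n,r)$ in general. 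Second, Krull--Schmidt classifies indecomposable \emph{summands}, not indecomposable \emph{submodules}, and with the paper's definition of a polynomial module as an arbitrary submodule of $V_0^{\otimes r}$ the gap is irreparable: for $n=2$, $r=3$, the simple socle $D_{(2,1)}=\operatorname{soc}(N_{(1,2)})$ (\cref{socle of Pim of Schur}, \cref{polynomial modules are Schur modules}(3)) is a one-dimensional indecomposable submodule of $V_0^{\otimes 3}$, whereas the nonzero $N_\gamma$ have dimensions $4,2,2$; so ``indecomposable submodule $\Rightarrow$ isomorphic to some $N_\alpha$'' is false as such, and the statement can only be read with Krob--Thibon's conventions --- which is presumably exactly why the paper cites rather than proves it. Third, your closing appeal to the decomposition $V_0^{\otimes r}\cong\bigoplus_\alpha N_\alpha^{\oplus m_\alpha}$ assumes the very Krob--Thibon result under proof, and your use of \cref{thm: 0-Schur functor degenerate quantum group PIM} is circular inside the paper, since that corollary rests on \cref{cor: 0-Schur functor 0-Hecke}, whose proof invokes \cref{Krob thibon classification}. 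Finally, even granting all of the above, your route only ever yields $\xi_{n,r}(M)\cong P_\lambda$ and hence $M\cong N_{\lambda^+}$ with $\ell(\lambda^+)\le n$, while statement (2) ranges over all $\alpha\in\Lambda^+(r)$, and the paper notes immediately after the proposition that $N_\alpha$ may be nonzero when $\ell(\alpha)>n$; those indecomposables are structurally unreachable by your projectivity argument.
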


By \cite[Proposition 7.1]{KT5}, the hypoplactic character of $D_{\alpha}$ is equal to the sum of all quasi-ribbon words of shape $\alpha$ over $x_{11}< \cdots < x_{nn}$, and its image in the ring of quasi-symmetric functions is the fundamental quasi-symmetric function $F_\alpha$. And, by \cite[Proposition 7.1]{KT5}, the hypoplactic character of $N_{\alpha}$ is equal to the sum of all words of ribbon shape $\alpha$ over $x_{11}< \cdots < x_{nn}$, and its image in the the ring of noncommutative symmetric functions is the ribbon Schur function $R_\alpha$. 
It follows that for 
$\alpha, \beta \in \Lambda^+(r)$ with  $\ell(\alpha), \ell(\beta) \le n$ and 
$\alpha \ne \beta$, we have $D_\alpha \not\cong D_\beta$ and $N_\alpha \not\cong N_\beta$.

Unlike $D_\alpha$, 
$N_\alpha$ may not be zero even when $\ell(\alpha) > n$.
Since $F_{n,r}$ is exact, 
every simple quotient arising from a composition series of $N_\alpha$
is of the form $D_\beta$ for some $\beta\in \Lambda^+(r)$.
It implies that  
$$[N_\alpha:D_\beta]=
\begin{cases}
 [R_\alpha:F_\beta] & \text{ if }\ell(\beta)\le n,\\
0 & \text{ otherwise,}
\end{cases}
$$
and therefore 
$$\ch(N_\alpha)=\sum_{\beta\in \bigcup_{0\le l\le n}\Lambda^+(l,r)}[R_\alpha:F_\beta]\ch(D_\beta).$$

\subsection{
Functorial relations among 
$U_0(\mathfrak{gl}_n)$-\textsf{mod},  
$\mathbf{S}_0(n,r)$-\textsf{mod}, and $\mathbf{H}_r(0)$-\textsf{mod}}

In the previous subsection, we introduced a functor
\begin{equation*}
F_{n,r} : \text{${\mathbf H}_r(0)$-\textsf{mod}} \to \text{$U_0(\mathfrak{gl}_n)$-\textsf{mod}}.
\end{equation*}
Here, we introduce two more functors 
\begin{align*}
&G_{n,r} : {\mathbf H}_r(0)\text{-\textsf{mod}} \longrightarrow {\mathbf S}_0(n,r)\text{-\textsf{mod}},\\
&\Psi_{n,r}: {\mathbf S}_0(n,r)\text{-\textsf{mod}} \to U_0(\mathfrak{gl}_n)\text{-\textsf{mod}}.
\end{align*}
First, observe that the actions of ${\mathbf H}_r(0)$ and $\operatorname{End}_{{\mathbf H}_r(0)}({V_0}^{\otimes r})$ on ${V_0}^{\otimes r}$ commute. Hence, the tensor space ${V_0}^{\otimes r}$ admits the structure of a $(\operatorname{End}_{{\mathbf H}_r(0)}({V_0}^{\otimes r}), {\mathbf H}_r(0))$-bimodule. 
Identifying ${\mathbf S}_0(n,r)$ with $\operatorname{End}_{{\mathbf H}_r(0)}({V_0}^{\otimes r})$ as in \cref{Realizing Schur algebra as endomorphism algebra}, this yields a functor
\[
G_{n,r} : {\mathbf H}_r(0)\text{-\textsf{mod}} \longrightarrow {\mathbf S}_0(n,r)\text{-\textsf{mod}}, \quad M \mapsto {V_0}^{\otimes r} \otimes_{{\mathbf H}_r(0)} M.
\]
Next, observe that the actions of $U_0(\mathfrak{gl}_n)$ and ${\mathbf H}_r(0)$ on ${V_0}^{\otimes r}$ commute. 
Hence, the image of the representation of $U_0(\mathfrak{gl}_n)$ in $\operatorname{End}_{\mathbb C}({V_0}^{\otimes r})$ lies in the subalgebra $\operatorname{End}_{{\mathbf H}_0(r)}({V_0}^{\otimes r})$ by definition.
We denote the induced algebra homomorphism by
\begin{equation}\label{def of psi}
\psi_{n,r} : U_0(\mathfrak{gl}_n) \to {\mathbf S}_0(n, r).
\end{equation}
It should be noted that, in contrast to the generic case, $\psi_{n,r}$ is not surjective in general (see \cite[Note~5.7]{KT5}).
Then, we define 
$$\Psi_{n,r}: {\mathbf S}_0(n,r)\text{-\textsf{mod}} \to U_0(\mathfrak{gl}_n)\text{-\textsf{mod}}$$ 
to be the pullback functor induced by the map $\psi_{n,r}: U_0(\mathfrak{gl}_n) \to {\mathbf S}_0(n,r)$.
It is straightforward to verify that
\begin{align}\label{eq: phi G = F}
\Psi_{n,r} \circ G_{n,r} = F_{n,r}.
\end{align}
More precisely, for any finite-dimensional ${\mathbf H}_r(0)$-module $M$, the following diagram commutes:
\[
\begin{tikzcd}[row sep=large]
 & M \arrow[dl, "G_{n,r}"', long mapsto] \arrow[dr, "F_{n,r}", long mapsto] & \\
{V_0}^{\otimes r} \otimes_{{\mathbf H}_r(0)} M \arrow[rr, "\Psi_{n,r}"] && {V_0}^{\otimes r} \otimes_{{\mathbf H}_r(0)} M
\end{tikzcd}
\]
Let $\alpha \in \Lambda^+(r)$ with $\ell(\alpha) \le n$.
Combining the commutativity of the diagram with \cref{Krob thibon classification}, it follows that $G_{n,r}(F_{\alpha})$ is a simple $\mathbf{S}_0(n, r)$-module, and $G_{n,r}(R_{\alpha})$ is an indecomposable $\mathbf{S}_0(n, r)$-module.

\subsubsection{Projectivity of $G_{n,r}(R_{\alpha})$}
We here show that, for each $\alpha \in \Lambda^+(r)$ with $\ell(\alpha)\le n$, the module $G_{n,r}(R_{\alpha})$ is projective over $\mathbf{S}_0(n, r)$.
Let $\{e_\alpha \mid \alpha \in \Lambda^+(r)\}$ be a complete set of primitive orthogonal idempotents of ${\mathbf H}_r(0)$ such that $R_\alpha = {\mathbf H}_r(0) e_\alpha$.
Let 
\begin{equation}\label{condition of e}
e := \sum_{\substack{\alpha \in \Lambda^+(r)\\ \ell(\alpha) \le n}} e_\alpha,
\end{equation}
It was remarked in \cite[Section 4]{DY12} that $\mathbf{S}_0(n, r)$ is Morita equivalent to $e{\mathbf H}_r(0)e$.
Let us explain this in more detail.
Put 
$$Q:=\operatorname{End}_{{\mathbf H}_r(0)}(e{\mathbf H}_r(0),{V_0}^{\otimes r}).$$
$\operatorname{End}_{{\mathbf H}_r(0)}(e{\mathbf H}_r(0))$ acts on $Q$ from the right via post-composition, while 
$\operatorname{End}_{{\mathbf H}_r(0)}({V_0}^{\otimes r})$ acts on $Q$ from the left via pre-composition, and thus 
$Q$ is an $(\mathbf{S}_0(n, r),\operatorname{End}_{{\mathbf H}_r(0)}(e{\mathbf H}_r(0)))$-bimodule.
Define
$$
\Phi:\End_{{\mathbf H}_r(0)}({V_0}^{\otimes r})\longrightarrow \End_{\operatorname{End}_{{\mathbf H}_r(0)}(e{\mathbf H}_r(0))}\big(Q\big)
$$
by
$$
\Phi(\varphi)(g):=\varphi\circ g\qquad(\varphi\in\End_{{\mathbf H}_r(0)}({V_0}^{\otimes r}),\ g\in Q).
$$
One can easily see that it is a $\mathbb C$-algebra isomorphism.
Recall that $$\operatorname{End}_{{\mathbf H}_r(0)}(e{\mathbf H}_r(0))\cong e{\mathbf H}_r(0)e\quad (\text{as $\mathbb C$-algebras}).$$  
Via the $\mathbb C$-vector space isomorphism 
$$Q \stackrel{\cong }{\to} {V_0}^{\otimes r}e, \quad \varphi \mapsto \varphi(e), $$
let us view ${V_0}^{\otimes r}e$ 
as an $(\mathbf{S}_0(n, r),e\mathbf {H}_r(0)e)$-bimodule.
Define
$$
\Phi:\End_{{\mathbf H}_r(0)}({V_0}^{\otimes r})\longrightarrow \End_{e{\mathbf H}_r(0)e}\big({V_0}^{\otimes r}e\big)
$$
by
$$
\Phi(\varphi)(xe):=\varphi(x)e \qquad(\varphi\in\End_{{\mathbf H}_r(0)}({V_0}^{\otimes r}),\ x\in {V_0}^{\otimes r}).
$$
One can easily see that it is a $\mathbb C$-algebra isomorphism.  
Endow ${V_0}^{\otimes r}e$ with a left $e{\mathbf H}_r(0)e$-action 
$$h\cdot x:= x \cdot \chi(h) \quad (h\in e{\mathbf H}_r(0)e, x\in {V_0}^{\otimes r}e).$$
The resulting left $e{\mathbf H}_r(0)e$-module is denoted by $({V_0}^{\otimes r}e)^{\mathrm{op}}$.
Then  
$$
{\mathbf S}_0(n,r) \cong \End_{e{\mathbf H}_r(0)e}\big(({V_0}^{\otimes r}e)^{\mathrm{op}}\big)^{\mathrm{op}}.
$$
To see that $({V_0}^{\otimes r}e)^{\mathrm{op}}$ is a progenerator of $(e{\mathbf H}_r(0)e)$-mod, let 
$$
{V_0}^{\otimes r}\cong \bigoplus_{\lambda \in \Lambda(n,r)} x_{\lambda}{\mathbf H}_r(0) 
   \cong \bigoplus_{\alpha \in \Lambda^+(r)} \bigl(e_\alpha {\mathbf H}_r(0)\bigr)^{d_\alpha} \quad (\text{as right ${\mathbf H}_r(0)$-modules}),
$$
where $d_\alpha$ is the multiplicity of $e_\alpha {\mathbf H}_r(0)$.
As right $(e{\mathbf H}_r(0)e)$-modules, 
we have the isomorphism 
$$
{V_0}^{\otimes r}e
   \cong \bigoplus_{\alpha \in \Lambda^+(r)} \bigl(e_\alpha {\mathbf H}_r(0)e\bigr)^{d_\alpha}.
$$
From the $e{\mathbf H}_r(0)e$-module isomorphism  
$$ (e_\alpha{\mathbf H}_r(0)e)^{\mathrm{op}} \stackrel{\cong}{\to}  e{\mathbf H}_r(0)e_\alpha, \quad e_{\alpha}ae \mapsto e\chi(a)e_{\alpha},$$
it follows that 
$$
({V_0}^{\otimes r}e)^{\mathrm{op}}
   \stackrel{\cong}{\to} \bigoplus_{\alpha \in \Lambda^+(r)} \bigl(e{\mathbf H}_r(0)e_\alpha \bigr)^{d_\alpha}.
$$
It was shown in \cite[Proposition 4.2]{DY12} that 
$$
d_{\alpha}= \bigl|\{\lambda \in \Lambda(n,r) \mid \set(\alpha)\subseteq \set(\lambda^+)\}\bigr|
$$
and $d_\alpha>0$ precisely when $\ell(\alpha)\le n$.
It follows that 
$({V_0}^{\otimes r}e)^{\mathrm{op}}$ is a progenerator for $e{\mathbf H}_r(0)e$-mod. 

For the subsequent discussion, however, we shall work with the progenerator $({V_0}^{\otimes r}e)^*$ for $e{\mathbf H}_r(0)e$-mod, rather than $({V_0}^{\otimes r}e)^{\mathrm{op}}$. 
To this end, following \cite[Section~4]{05Fayers}, consider the standard Frobenius linear functional
$$
\varepsilon: {\mathbf H}_r(0)\to \mathbb C,\qquad 
\varepsilon(\pi_w)=
\begin{cases}
1,& w=w_0,\\
0,& w\ne w_0,
\end{cases}
\quad (w\in \SG_r).
$$
Define a $\mathbb C$-linear map
$$
\Theta:\ e{\mathbf H}_r(0)\,\autophi(e_\alpha)\ \longrightarrow\ (\,e_\alpha  {\mathbf H}_r(0) e\,)^*,
$$
by 
$\Theta(eh\,\autophi(e_\alpha))(x)\ 
=\ \varepsilon(xh)$ for all $x\in e_\alpha {\mathbf H}_r(0) e$ and $h\in {\mathbf H}_r(0)$.

\begin{lemma}
$\Theta$ is a well defined left $e{\mathbf H}_r(0)e$-module isomorphism.
\end{lemma}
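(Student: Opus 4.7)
The plan is to exploit the Frobenius structure on $\mathbf{H}_r(0)$ with symmetrizing form $\varepsilon$. The main input is that the Nakayama automorphism of $(\mathbf{H}_r(0), \varepsilon)$ is precisely Fayers' involution $\autophi$; equivalently,
\begin{equation}\label{nakayama identity}
\varepsilon(ab) \;=\; \varepsilon\bigl(b\,\autophi(a)\bigr) \qquad \text{for all } a, b \in \mathbf{H}_r(0),
\end{equation}
a standard fact about the $0$-Hecke algebra recorded in \cite{05Fayers}. Once \cref{nakayama identity} is in hand, the four claims (well-definedness, left $e\mathbf{H}_r(0)e$-linearity, injectivity, bijectivity) reduce to short manipulations; the proof has essentially no further conceptual ingredients.

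For well-definedness, suppose $eh\autophi(e_\alpha)=eh'\autophi(e_\alpha)$ and set $k:=h-h'$, so $ek\autophi(e_\alpha)=0$. For an arbitrary $x=e_\alpha y e \in e_\alpha\mathbf{H}_r(0)e$, applying \cref{nakayama identity} to $a=e_\alpha$, $b=yek$ yields
\[
\varepsilon(xk)=\varepsilon(e_\alpha y e k)=\varepsilon\bigl(yek\,\autophi(e_\alpha)\bigr)=\varepsilon(ye\cdot 0)=0,
\]
so $\varepsilon(xh)=\varepsilon(xh')$ and $\Theta$ is well defined. The left $e\mathbf{H}_r(0)e$-module property is then a direct check: writing $z=eze\in e\mathbf{H}_r(0)e$ and using $xe=x$ for $x\in e_\alpha\mathbf{H}_r(0)e$ (because $e^2=e$), one verifies that both $\Theta(z\cdot eh\autophi(e_\alpha))(x)$ and $(z\cdot\Theta(eh\autophi(e_\alpha)))(x)$ equal $\varepsilon(xzh)$.

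For injectivity, $\Theta(eh\autophi(e_\alpha))=0$ means $\varepsilon(e_\alpha yeh)=0$ for every $y\in\mathbf{H}_r(0)$; a second application of \cref{nakayama identity} converts this into
\[
\varepsilon\bigl(y\cdot eh\autophi(e_\alpha)\bigr)=0 \qquad \text{for every } y\in\mathbf{H}_r(0),
\]
and the non-degeneracy of $\varepsilon$ forces $eh\autophi(e_\alpha)=0$. For surjectivity I would invoke the Frobenius identity $\dim aAb=\dim bA\nu(a)$, valid for any pair of idempotents $a,b$ in a Frobenius algebra $A$ with Nakayama automorphism $\nu$. Applied to $A=\mathbf{H}_r(0)$, $\nu=\autophi$, $a=e_\alpha$, and summed over $b=e_\beta$ with $\ell(\beta)\le n$, it gives
\[
\dim e_\alpha\mathbf{H}_r(0)e=\sum_{\ell(\beta)\le n}\dim e_\beta\mathbf{H}_r(0)\autophi(e_\alpha)=\dim e\mathbf{H}_r(0)\autophi(e_\alpha),
\]
so that the injective map $\Theta$ must be a bijection. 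The only non-elementary ingredient is the identification of $\autophi$ with the Nakayama automorphism of $(\mathbf{H}_r(0),\varepsilon)$, which I expect to be the main obstacle to rigor; the rest is one-line manipulations with $\varepsilon$ and idempotents.
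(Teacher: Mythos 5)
Your proof is correct and takes essentially the same route as the paper's: the Nakayama identity $\varepsilon(ab)=\varepsilon\bigl(b\,\autophi(a)\bigr)$ from \cite{05Fayers} handles well-definedness and injectivity, $e\mathbf{H}_r(0)e$-linearity is a direct check, and bijectivity follows from an equality of dimensions. The only difference is that you justify that dimension equality explicitly, via the nondegenerate pairing giving $\dim aAb=\dim bA\nu(a)$ summed over the idempotents $e_\beta$ with $\ell(\beta)\le n$, whereas the paper simply asserts that the two spaces have equal dimension.
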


\begin{proof}    
First, we prove that $\Theta$ is well defined.
Suppose that $eh\,\autophi(e_\alpha)=eh'\,\autophi(e_\alpha)$. For $x\in e_\alpha {\mathbf H}_r(0) e$, since $x=e_\alpha x e$, one has
$$
\varepsilon(xh)=\varepsilon((e_\alpha x e)h)=\varepsilon(e_\alpha (xeh)).
$$
By \cite[Proposition~4.2]{05Fayers}, the map
$$
\autophi: {\mathbf H}_r(0)\longrightarrow {\mathbf H}_r(0), 
\qquad \pi_i \mapsto \pi_{r-i}\quad (1\le i\le r-1),
$$
is the Nakayama automorphism of ${\mathbf H}_r(0)$ associated with $\varepsilon$. Hence,
$$
\varepsilon(xh)=\varepsilon(xeh\,\autophi(e_\alpha))
=\varepsilon(xeh'\,\autophi(e_\alpha))
=\varepsilon(e_\alpha xeh')=\varepsilon(xh').
$$
Thus $\Theta$ is well defined.

Next, we prove that $\Theta$ is a left $e{\mathbf H}_r(0)e$-module homomorphism.
For $y\in e{\mathbf H}_r(0)e$, $h\in {\mathbf H}_r(0)$ and $x\in e_\alpha {\mathbf H}_r(0)e$, we have 
\begin{align*}
\Theta\bigl(y\cdot (eh\autophi^{-1}(e_\alpha))\bigr)(x)
&= \Theta\bigl(yh\autophi^{-1}(e_\alpha)\bigr)(x)
= \varepsilon\bigl(x\,(yh)\bigr)
= \varepsilon\bigl((xy)\,h\bigr)\\
&= \Theta\bigl(eh\autophi^{-1}(e_\alpha)\bigr)(xy)
= \bigl(y\cdot\Theta(eh\autophi^{-1}(e_\alpha))\bigr)(x).
\end{align*}

Finally, we show that $\Theta$ is a $\mathbb C$-isomorphism.
Suppose $\Theta(eh\autophi^{-1}(e_\alpha))=0$. Then
$\varepsilon(xh)=0$ for all 
$x\in e_\alpha e$.
By the Nakayama property $\varepsilon(ab)=\varepsilon(b\autophi(a))$, this is equivalent to
$$
\varepsilon(h\,\autophi(x))=0\qquad \text{for all }x\in e_\alpha {\mathbf H}_r(0) e.
$$
Hence
$
\varepsilon(hy)=0$ for all $y\in \autophi(e_\alpha){\mathbf H}_r(0)e$.
Since the Frobenius form $(u,v)\mapsto \varepsilon(uv)$ is nondegenerate, the condition above forces
$h\autophi(e_\alpha)e=0$.
Thus $\ker\Theta=0$, so $\Psi$ is injective.
Now, the assertion follows since both spaces have equal dimension. 
\end{proof}

From the isomorphism    
${\mathbf H}_r(0)\autophi(e_\alpha)\cong {\mathbf H}_r(0)e_{\alpha^{\rmr}}$,
we obtain the left $e{\mathbf H}_r(0)e$-module isomorphisms
$$({V_0}^{\otimes r}e)^*\cong \bigoplus_{\alpha \in \Lambda^+(r)}((e_\alpha {\mathbf H}_r(0)e)^{d_\alpha})^* \cong \bigoplus_{\alpha \in \Lambda^+(r)}(e{\mathbf H}_r(0)e_{\alpha^\rmr})^{d_{\alpha}}.$$
It shows that $({V_0}^{\otimes r}e)^*$ is a  progenerator for $e{\mathbf H}_r(0)e$-mod
and 
\begin{equation*}  
\mathbf{S}_0(n, r)\cong \operatorname{End}_{e{\mathbf H}_r(0)e}\left (({V_0}^{\otimes r}e)^*\right)^{\rm op}.
\end{equation*}
This gives rise to the following equivalence of $e{\mathbf H}_r(0)e\text{-mod}$ and ${\mathbf S}_0(n,r)\text{-mod}$: 
$$\operatorname{Hom}_{e{\mathbf H}_r(0)e}(({V_0}^{\otimes r}e)^*,e{\mathbf H}_r(0)e) \otimes_{e{\mathbf H}_0(r)e} -:e{\mathbf H}_r(0)e\text{-\textsf{mod}} \to {\mathbf S}_0(n,r)\text{-\textsf{mod}}.$$
This functor turns out be naturally isomorphic to 
$$A_{n,r}:={V_0}^{\otimes r}e\otimes_{e{\mathbf H}_r(0)e}-: e{\mathbf H}_r(0)e\text{-\textsf{mod}} \to {\mathbf S}_0(n,r)\text{-\textsf{mod}}.$$
Indeed, given an object $M$ in $e{\mathbf H}_r(0)e\text{-mod}$,
we have $\mathbf{S}_0(n, r)$-module isomorphisms
\begin{align*}
&\operatorname{Hom}_{e{\mathbf H}_r(0)e}(({V_0}^{\otimes r}e)^*,e{\mathbf H}_r(0)e) \otimes_{e{\mathbf H}_r(0)e} M \\
&\cong {V_0}^{\otimes r}e \otimes_{e{\mathbf H}_r(0)e}e{\mathbf H}_r(0)e\otimes_{e{\mathbf H}_r(0)e} M \\
&\cong {V_0}^{\otimes r}e\otimes_{e{\mathbf H}_r(0)e}M.
\end{align*}
Consider the functor 
$$B_{n,r}: {\mathbf H}_r(0)\text{-\textsf{mod}} \to e{\mathbf H}_r(0)e\text{-\textsf{mod}}, \quad M \mapsto eM, $$
which is naturally isomorphic to $\operatorname{Hom_{{\mathbf H}_r(0)}}({\mathbf H}_r(0)e, -)$.

\begin{theorem} \label{how to prove projectivity}
The following hold:
\begin{enumerate}[label = {\rm (\arabic*)}]
   \item 
   $B_{n,r}\circ A_{n,r}$ and $G_{n,r}$ are naturally isomorphic.
   \item For each $\alpha \in \Lambda^+(r)$ with $\ell(\alpha)\le n$, $e{\mathbf H}_r(0)e_\alpha$ is a projective indecomposable $e{\mathbf H}_r(0)e$-module.
   \item 
For each $\alpha\in \Lambda^+(r)$ with $\ell(\alpha)\le n$,
$G_{n,r}(R_{\alpha})$ is a projective indecomposable $\mathbf{S}_0(n, r)$-module.   
\end{enumerate}
\end{theorem}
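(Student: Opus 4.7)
The plan is to treat the three parts in order, with parts (1) and (2) feeding into (3).

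For (1), I read the composition as $A_{n,r}\circ B_{n,r}$ (taking the literal $B_{n,r}\circ A_{n,r}$ to be a typo, since only the stated order matches the source and target of $G_{n,r}$). The core step is to exhibit a $({\mathbf S}_0(n,r), {\mathbf H}_r(0))$-bimodule isomorphism
\[
\mu:\ V_0^{\otimes r}e\,\otimes_{e{\mathbf H}_r(0)e}\, e{\mathbf H}_r(0)\ \stackrel{\cong}{\longrightarrow}\ V_0^{\otimes r},\qquad xe\otimes ey\mapsto xey.
\]
Via the decomposition ${V_0}^{\otimes r}\cong \bigoplus_{\alpha}(e_\alpha{\mathbf H}_r(0))^{d_\alpha}$ recalled earlier (where $d_\alpha>0$ iff $\ell(\alpha)\le n$), this reduces to checking $e_\alpha{\mathbf H}_r(0)e{\mathbf H}_r(0)=e_\alpha{\mathbf H}_r(0)$ for every $\alpha$ with $d_\alpha>0$. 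But the orthogonality relation $e_\alpha e=e_\alpha$ gives $e_\alpha=e_\alpha\cdot e\cdot 1 \in e_\alpha{\mathbf H}_r(0)e{\mathbf H}_r(0)$, which yields the equality. Tensoring with any ${\mathbf H}_r(0)$-module $M$ and using the canonical identification $eM\cong e{\mathbf H}_r(0)\otimes_{{\mathbf H}_r(0)}M$ then produces natural isomorphisms
\[
A_{n,r}(B_{n,r}(M))=V_0^{\otimes r}e\otimes_{e{\mathbf H}_r(0)e}eM\cong V_0^{\otimes r}\otimes_{{\mathbf H}_r(0)}M=G_{n,r}(M).
\]

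For (2), since $e=\sum_{\ell(\alpha)\le n}e_\alpha$ is an orthogonal sum of primitive idempotents of ${\mathbf H}_r(0)$, each such $e_\alpha$ satisfies $e_\alpha=ee_\alpha e\in e{\mathbf H}_r(0)e$ and remains primitive inside the corner ring: any orthogonal decomposition of $e_\alpha$ in $e{\mathbf H}_r(0)e$ is also one in ${\mathbf H}_r(0)$, hence trivial by primitivity there. Consequently $e{\mathbf H}_r(0)e\cdot e_\alpha=e{\mathbf H}_r(0)e_\alpha$ is a principal indecomposable summand of the regular representation of $e{\mathbf H}_r(0)e$, and therefore a projective indecomposable left $e{\mathbf H}_r(0)e$-module.

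For (3), I combine the previous two parts with the Morita equivalence established in the preceding discussion. From (1), using $eR_\alpha=e{\mathbf H}_r(0)e_\alpha$, one has
\[
G_{n,r}(R_\alpha)\cong A_{n,r}(B_{n,r}(R_\alpha))=A_{n,r}(e{\mathbf H}_r(0)e_\alpha).
\]
The paper has already shown that $A_{n,r}$ is naturally isomorphic to the functor $\operatorname{Hom}_{e{\mathbf H}_r(0)e}((V_0^{\otimes r}e)^*,e{\mathbf H}_r(0)e)\otimes_{e{\mathbf H}_r(0)e}-$ induced by the progenerator $(V_0^{\otimes r}e)^*$, hence is itself a Morita equivalence $e{\mathbf H}_r(0)e\textup{-\textsf{mod}}\xrightarrow{\sim}{\mathbf S}_0(n,r)\textup{-\textsf{mod}}$. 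Since equivalences of categories preserve projective indecomposable objects, the conclusion of (2) transports to projective indecomposability of $G_{n,r}(R_\alpha)$ over ${\mathbf S}_0(n,r)$.

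The main obstacle will be the bookkeeping in (1): one must verify that $\mu$ is simultaneously compatible with the left ${\mathbf S}_0(n,r)\cong\operatorname{End}_{{\mathbf H}_r(0)}(V_0^{\otimes r})$-action (via post-composition on $V_0^{\otimes r}e$) and with the right ${\mathbf H}_r(0)$-action, and then thread this identification through naturality in $M$. Once the bimodule isomorphism is in hand, parts (2) and (3) are formal consequences of standard corner-ring and Morita arguments.
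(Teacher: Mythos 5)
Your proposal is correct, and your reading of $B_{n,r}\circ A_{n,r}$ as $A_{n,r}\circ B_{n,r}$ matches what the paper's own proof actually does (the paper's map $\eta_M: V_0^{\otimes r}e\otimes_{e{\mathbf H}_r(0)e}eM\to V_0^{\otimes r}\otimes_{{\mathbf H}_r(0)}M$ confirms the stated composition order is a typo). Your global architecture coincides with the paper's---natural isomorphism with $G_{n,r}$, projectivity and indecomposability in the corner algebra $e{\mathbf H}_r(0)e$, then transport along the Morita equivalence induced by the progenerator $(V_0^{\otimes r}e)^*$---but you diverge in two instructive ways. For (1), the paper simply defines $\eta_M(ve\otimes em)=v\otimes m$ and asserts that $v\otimes m\mapsto ve\otimes em$ is a well-defined inverse; your detour through the bimodule isomorphism $\mu: V_0^{\otimes r}e\otimes_{e{\mathbf H}_r(0)e}e{\mathbf H}_r(0)\cong V_0^{\otimes r}$ is precisely what justifies that assertion, since well-definedness of the paper's inverse is the only nontrivial point. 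One step you should spell out: the equality $e_\alpha{\mathbf H}_r(0)e{\mathbf H}_r(0)=e_\alpha{\mathbf H}_r(0)$ by itself only yields surjectivity of $\mu$; injectivity follows from the same relation $e_\alpha e=e_\alpha=ee_\alpha$ via the canonical isomorphism $e_\alpha\bigl(e{\mathbf H}_r(0)e\bigr)\otimes_{e{\mathbf H}_r(0)e}e{\mathbf H}_r(0)\cong e_\alpha{\mathbf H}_r(0)$ (the standard $fA\otimes_A X\cong fX$ for an idempotent $f\in A$), applied summandwise through $V_0^{\otimes r}\cong\bigoplus_\alpha\bigl(e_\alpha{\mathbf H}_r(0)\bigr)^{d_\alpha}$ with $d_\alpha>0$ exactly when $\ell(\alpha)\le n$. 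For (2), your argument is genuinely different from, and more self-contained than, the paper's: you show primitivity of $e_\alpha$ descends to the corner ring, because any orthogonal idempotent decomposition of $e_\alpha$ inside $e{\mathbf H}_r(0)e$ is already one inside ${\mathbf H}_r(0)$ (equivalently, the relevant corner $e_\alpha{\mathbf H}_r(0)e_\alpha=e_\alpha\bigl(e{\mathbf H}_r(0)e\bigr)e_\alpha$ is unchanged), whereas the paper deduces indecomposability of $e{\mathbf H}_r(0)e_\alpha$ backwards from the indecomposability of $G_{n,r}(R_\alpha)$, which it had obtained earlier from $\Psi_{n,r}\circ G_{n,r}=F_{n,r}$ together with the Krob--Thibon classification of the modules $N_\alpha$. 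Your version thus removes the dependence of (2) and (3) on the degenerate quantum group input, making (3) a purely Morita-theoretic consequence of (1) and (2); the paper's (3) is the same transport argument, so there is no gap in your plan, only the small injectivity remark above to write out.
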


\begin{proof}
(1) We will define a natural transformation  
$\eta: B_{n,r}\circ A_{n,r} \to G_{n,r}$ as follows: 
 
Given an object $M$ in ${\mathbf H}_r(0)\text{-mod}$, define 
$$\eta_M: {V_0}^{\otimes r}e\otimes_{e{\mathbf H}_r(0)e}eM \to {V_0}^{\otimes r}\otimes_{{\mathbf H}_r(0)}M, \quad ve\otimes em \mapsto v\otimes m. 
$$
One can see that $\eta_M$is an $S_0(n,r)$-module isomorphism and its inverse is given by $v\otimes m \mapsto ve\otimes em$.
Also, given an ${\mathbf H}_r(0)$-homomorphism $f: M \to N$, the following diagram
\[ \begin{tikzcd}
{V_0}^{\otimes r}e\otimes_{e{\mathbf H}_r(0)e}eM  \arrow{r}{\eta_M} \arrow[swap]{d}{{\rm id} \otimes f} & {V_0}^{\otimes r}\otimes_{{\mathbf H}_r(0)}M \arrow{d}{{\rm id} \otimes f} \\%
{V_0}^{\otimes r}e\otimes_{e{\mathbf H}_r(0)e}eN \arrow{r}{\eta_N}& {V_0}^{\otimes r}\otimes_{{\mathbf H}_r(0)}N
\end{tikzcd}
\]
is commutative since 
\begin{align*}
&({\rm id} \otimes f) \circ \eta_M(ve\otimes em)=({\rm id} \otimes f)(v\otimes m)=v\otimes f(m), \\ 
& \eta_N\circ({\rm id} \otimes f)  (ve\otimes em)=\eta_N(ve\otimes ef(m))=v\otimes f(m).
\end{align*}

(2) From 
$$e{\mathbf H}_r(0)e=\displaystyle\bigoplus_{\alpha\in \bigcup_{0\le l\le n}\Lambda^+(l,r)}e{\mathbf H}_r(0)e_\alpha$$
it follows that $e{\mathbf H}_r(0)e_\alpha$ for $\ell(\alpha)\le n$ is a  
projective $e{\mathbf H}_r(0)e$-module.
Now, note that  
\begin{equation}  \label{why projective indecomposable}
G_{n,r}(R_{\alpha})=B_{n,r}\circ A_{n,r}({\mathbf H}_r(0)e_\alpha)=B_{n,r}(e{\mathbf H}_r(0)e_\alpha).
\end{equation}
Since $G_{n,r}(R_{\alpha})$ is indecomposable, it follows that  $e{\mathbf H}_r(0)e_\alpha$ is also indecomposable.

(3) Due to \cref{why projective indecomposable}, the assertion follows from (2)
\end{proof}

\subsubsection{
Images of simple 
and projective indecomposable $\mathbf{H}_r(0)$-modules under the functor $G_{n,r}$
}

We begin with some preparatory definitions and lemmas.

\begin{definition}
Let $\alpha \in \Lambda^+(r)$ with $\ell(\alpha) \le n$. For each $T \in \operatorname{QRT}(n, \alpha)$, define its weight vector $\operatorname{wt}(T)=(\operatorname{wt}(T)_1, \operatorname{wt}(T)_2, \ldots, \operatorname{wt}(T)_n) \in \mathbb{Z}_{\ge 0}^n$ by
$$
\operatorname{wt}(T)_i := \#\{\, \text{entries equal to } i \text{ in } T \,\} \quad \text{for } 1 \le i \le n.
$$

\end{definition}

Recall that for $\alpha \in \Lambda^+(r)$ with $\ell(\alpha) \le n$, the composition $\alpha^{\bullet} \in \Lambda^{\bullet}(n, r)$ is obtained by appending zeros to $\alpha$ so that its length is equal to $n$ (see \cref{relation between weak and strong compositions}).

\begin{lemma}\label{lem: wt bijection}
For $\alpha \in \Lambda^+(r)$ with $\ell(\alpha) \le n$, the map
\[
\wt: {\rm QRT}(n, \alpha) \to \Lambda(n,r)_{\preceq \alpha^{\bullet}}, \quad T \mapsto \wt(T)
\]
is a bijection. For the definition of the codomain, see \cref{eq: refinement}.
\end{lemma}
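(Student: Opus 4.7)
The plan is to construct an explicit inverse to $\wt$ using the block characterization of refinements of $\alpha^{\bullet}$ established in the proof of \cref{prop: row bijection}.

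Write $\alpha = (\alpha_1, \dots, \alpha_s)$, so that $\alpha^{\bullet} = (\alpha_1, \dots, \alpha_s, 0, \dots, 0) \in \Lambda^{\bullet}(n,r)$. For the forward direction, I would first show $\wt(T) \preceq \alpha^{\bullet}$ for any $T \in {\rm QRT}(n, \alpha)$. The key observation is that the strict-increase condition across each joint of the ribbon forces every value $j \in \{1, \dots, n\}$ to appear in at most one row of $T$. Setting $i_1 := 1$, $i_{s+1} := n+1$, and $i_k := \min(\text{row } k \text{ of } T)$ for $2 \le k \le s$, the sequence $1 = i_1 < i_2 < \dots < i_{s+1} = n+1$ partitions $\{1, \dots, n\}$ into blocks $\{i_k, i_k+1, \dots, i_{k+1}-1\}$, and every entry of $T$ lying in such a block belongs to row $k$. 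Consequently,
$$
\sum_{j=i_k}^{i_{k+1}-1} \wt(T)_j \;=\; \alpha_k \qquad (1 \le k \le s),
$$
and $\wt(T)_{i_k} > 0$ for $2 \le k \le s$ by the choice of $i_k$. By the characterization of refinements invoked in the proof of \cref{prop: row bijection}, this is precisely the condition $\wt(T) \preceq \alpha^{\bullet}$.

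For the inverse, given $\mu \in \Lambda(n,r)_{\preceq \alpha^{\bullet}}$ with associated block indices $1 = i_1 < i_2 < \dots < i_{s+1} = n+1$, I would define $T(\mu) \in {\rm QRT}(n, \alpha)$ by filling row $k$, from left to right, with $\mu_j$ copies of $j$ for $j = i_k, i_k+1, \dots, i_{k+1}-1$. Row $k$ then contains exactly $\alpha_k$ entries in weakly increasing order, and its maximal entry is at most $i_{k+1}-1$, hence strictly less than $i_{k+1}$, which is the leftmost (and minimum) entry of row $k+1$ since $\mu_{i_{k+1}} > 0$. Thus $T(\mu) \in {\rm QRT}(n, \alpha)$, and $\wt(T(\mu)) = \mu$ by construction. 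Conversely, $T(\wt(T)) = T$ because the indices $i_k$ built from $T$ above recover the block decomposition of $\wt(T)$, and row $k$ of $T$ consists precisely of $\wt(T)_j$ copies of $j$ for $j$ ranging over that block.

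The argument is largely bookkeeping; the only subtlety is the treatment of the trailing zero parts of $\alpha^{\bullet}$, which collapse into the last block $\{i_s, i_s+1, \dots, n\}$ and require the positivity condition $\mu_{i_k} > 0$ for $2 \le k \le s$ to ensure uniqueness of the block decomposition. This has already been addressed in \cref{prop: row bijection}, so no additional work is needed here.
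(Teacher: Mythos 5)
Your proposal is correct and takes essentially the same route as the paper: it constructs the explicit inverse $\mu \mapsto T_\mu$ by filling row $k$ with $\mu_j$ copies of $j$ for $j$ ranging over the $k$-th block of the unique block decomposition of $\mu \preceq \alpha^{\bullet}$, exactly as the paper does. The only difference is that you spell out the forward direction ($\wt(T) \preceq \alpha^{\bullet}$, via the observation that each letter occupies at most one row and the row minima furnish the block indices), which the paper dispatches with ``it follows directly from the definition''; this extra detail is accurate and harmless.
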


\begin{proof}
It follows directly from the definition of quasi-ribbon tableaux that $\wt(T) \in \Lambda(n,r)_{\preceq \alpha^{\bullet}}$ for any $T \in {\rm QRT}(n, \alpha)$.

To construct the inverse, let $\alpha = (\alpha_1, \ldots, \alpha_{\ell(\alpha)})$ and let $\mu = (\mu_1, \ldots, \mu_n) \in \Lambda(n,r)_{\preceq \alpha^{\bullet}}$. Since $\mu \preceq \alpha^{\bullet}$, there exists a unique sequence of indices
\[
1 = i_1 < i_2 < \cdots < i_{\ell(\alpha)+1} = n+1
\]
such that
\[
\alpha_k = \mu_{i_k} + \mu_{i_k+1} + \cdots + \mu_{i_{k+1}-1}
\quad \text{for all } 1 \le k \le \ell(\alpha),
\]
and $\mu_{i_k} > 0$ for all $2 \le k \le \ell(\alpha)$.
Define the tableau $T_\mu$ by filling the $k$-th row with the entries
\[
i_k^{\mu_{i_k}},\ (i_k + 1)^{\mu_{i_k + 1}},\ \ldots,\ (i_{k+1} - 1)^{\mu_{i_{k+1} - 1}},
\]
that is, the letter $j$ appears $\mu_j$ times in the $k$-th row for each $i_k \le j < i_{k+1}$.
Then the map $\mu \mapsto T_\mu$ defines the inverse of $\wt$, completing the proof.
\end{proof}

\begin{lemma}\label{lem: phi}
The map $\psi_{n,r} : U_0(\mathfrak{gl}_n) \to {\mathbf S}_0(n, r)$ in \cref{def of psi} satisfies the following identities: 
\[
\begin{aligned}
\psi_{n,r}(\mathbf{e}_i) &= e_{i} & &\text{for } 1 \leq i \leq n-1, \\
\psi_{n,r}(\mathbf{f}_i) &= f_{i} & &\text{for } 1 \leq i \leq n-1, \\
\psi_{n,r}(\mathbf{k}_i) &= \sum_{\mu_i = 0} k_{\mu} & &\text{for } 1 \leq i \leq n.
\end{aligned}
\]
\end{lemma}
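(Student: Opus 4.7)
The plan is to verify each identity inside $\mathbf{S}_0(n,r)$ by passing to its faithful action on $V_0^{\otimes r}$ via the isomorphism $\mathbf{S}_0(n,r)\cong\operatorname{End}_{\mathbf{H}_r(0)}(V_0^{\otimes r})$ of \cref{Realizing Schur algebra as endomorphism algebra}; by construction, $\psi_{n,r}(x)$ is just the $\mathbf{H}_r(0)$-equivariant endomorphism of $V_0^{\otimes r}$ obtained from the $U_0(\mathfrak{gl}_n)$-module structure. Write $\mathbf{v}_\mu:=\bar{\xi}_1^{\otimes\mu_1}\otimes\cdots\otimes\bar{\xi}_n^{\otimes\mu_n}$ for $\mu\in\Lambda(n,r)$, and let $V_\mu:=\mathbf{v}_\mu\cdot\mathbf{H}_r(0)$ be the span of tensors with exactly $\mu_i$ factors equal to $\bar{\xi}_i$. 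Then $V_0^{\otimes r}=\bigoplus_\mu V_\mu$, and tracing the right action \cref{eq: Hecke right action on V^r q=0} through $\phi_0$ of \cref{isomorphism phi 0} gives $\phi_0(\bar{x}_\mu)=\mathbf{v}_\mu$, so this weight decomposition matches $\bigoplus_\mu \bar{x}_\mu\mathbf{H}_r(0)$; by \cref{eq: k_lambda act on e_A} the element $k_\mu$ then acts as the idempotent projection onto $V_\mu$.

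The identity $\psi_{n,r}(\mathbf{k}_i)=\sum_{\mu_i=0}k_\mu$ is immediate: since $\mathbf{k}_i$ is group-like, its action is $\rho_{V_0}(\mathbf{k}_i)^{\otimes r}=(\sum_{j\ne i}E_{jj})^{\otimes r}$, the projection onto the span of tensors with no $\bar{\xi}_i$-factor, i.e.\ onto $\bigoplus_{\mu:\mu_i=0}V_\mu$, which by the previous paragraph coincides with the action of $\sum_{\mu:\mu_i=0}k_\mu$.

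For $\mathbf{e}_i$ (and symmetrically $\mathbf{f}_i$), both $\psi_{n,r}(\mathbf{e}_i)$ and $e_i$ are $\mathbf{H}_r(0)$-equivariant and map $V_\mu$ into $V_{\mu'}$ with $\mu':=\mu+\epsilon_i-\epsilon_{i+1}$, so it suffices to compare them at the cyclic generator $\mathbf{v}_\mu$ of each $V_\mu$. Iterating $\Delta(\mathbf{e}_i)=1\otimes\mathbf{e}_i+\mathbf{e}_i\otimes\mathbf{k}_i$ and using $\rho_{V_0}(\mathbf{k}_i)\bar{\xi}_k=(1-\delta_{k,i})\bar{\xi}_k$, only the positions $j$ within the $\bar{\xi}_{i+1}$-block of $\mathbf{v}_\mu$ contribute (the trailing $\rho_{V_0}(\mathbf{k}_i)$-factors kill any term whose tail still contains a $\bar{\xi}_i$), and a short induction yields
$$\mathbf{e}_i\cdot\mathbf{v}_\mu=\sum_{j=1}^{\mu_{i+1}}\mathbf{v}_{\mu'}\cdot\bar{\pi}_{s_p s_{p+1}\cdots s_{p+j-2}},\qquad p:=\mu_1+\cdots+\mu_i+1,$$
with the empty product ($j=1$) interpreted as the identity. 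On the Schur-algebra side, \cref{lem: multiplication rule}(1) together with \cref{eq: k_lambda act on e_A} give $e_i\cdot\bar{x}_\mu=e_{A_{i,\mu}}\cdot\bar{x}_\mu$ with $A_{i,\mu}=D_\mu+E_{i,i+1}-E_{i+1,i+1}$, which vanishes when $\mu_{i+1}=0$; when $\mu_{i+1}>0$, one reads off the image $e_{A_{i,\mu}}\cdot\bar{x}_\mu\in\bar{x}_{\mu'}\mathbf{H}_r(0)$ from the geometric description of the standard basis in \cite[\S7]{JS15} transported through $\phi_0$, and checks that it coincides with the same sum above. This last step is the main obstacle: it amounts to pinning down the distinguished $(\SG_\mu,\SG_{\mu'})$-double-coset representatives associated with $A_{i,\mu}$ in the $0$-Schur/DJ construction and verifying they are exactly the permutations $s_p\cdots s_{p+j-2}$. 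Once this bookkeeping is done, the identity $\psi_{n,r}(\mathbf{f}_i)=f_i$ follows by the parallel computation with $\Delta(\mathbf{f}_i)=\mathbf{k}_{i+1}\otimes\mathbf{f}_i+\mathbf{f}_i\otimes 1$ and \cref{lem: multiplication rule}(2).
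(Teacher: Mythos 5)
Your overall route coincides with the paper's: realize $\psi_{n,r}(x)$ as an $\mathbf{H}_r(0)$-equivariant operator on $V_0^{\otimes r}$ via \cref{Realizing Schur algebra as endomorphism algebra}, compute the $U_0(\mathfrak{gl}_n)$-side by iterating the coproduct, and compute the Schur-algebra side by transporting the standard basis through $\phi_0$ of \cref{isomorphism phi 0}. Your one genuine improvement is the reduction by equivariance and cyclicity: since $V_\mu=\mathbf{v}_\mu\cdot\mathbf{H}_r(0)$ and both operators are $\mathbf{H}_r(0)$-linear, it suffices to compare them on the sorted tensors $\mathbf{v}_\mu$. The paper instead computes $e_{i,\lambda}$ on \emph{every} simple tensor $\bar\xi_{\bf i}$ (\cref{lem: ei lam fi lam act on V^r}), which is what forces its delicate second half — reduced expressions adapted to ${\bf i}_\lambda\to{\bf i}$ and the equal-entries cancellation of \cref{eq: Hecke right action on V^r q=0}; your reduction renders that analysis unnecessary, and your formula $\mathbf{e}_i\cdot\mathbf{v}_\mu=\sum_{j=1}^{\mu_{i+1}}\mathbf{v}_{\mu'}\,\bar\pi_{s_p\cdots s_{p+j-2}}$ is correct and agrees with the paper's representatives $x_j=s_{b_i+1}\cdots s_{b_i+j}$, $0\le j\le\mu_{i+1}-1$, where $b_i+1=p$.

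However, the step you yourself flag as ``the main obstacle'' is precisely where the content of the lemma lies, and as written it is a genuine gap: nothing in your argument establishes how the abstract basis element $e_A$ with $A=D_\mu-E_{i+1,i+1}+E_{i,i+1}$ acts on $\bar x_\mu$ — indeed even that $e_i$ maps $V_\mu$ into $V_{\mu'}$ rests on it. Note also that \cref{eq: k_lambda act on e_A} cannot carry the weight you place on it for the $\mathbf{k}_i$ case: it is an identity for multiplication inside $S_0(n,r)$, not a description of the action on $V_0^{\otimes r}$, and a priori the orthogonal idempotents $k_\mu$ could implement the block decomposition with a permuted labelling. The paper closes both points with the single dictionary $e_A=\phi^{d_A}_{\row(A),\col(A)}$ from \cite[(2.1.2)]{DY12}: for $A$ as above one checks $w_A=\mathrm{id}$, hence $d_A=\mathrm{id}$, so $e_A(\bar x_\mu)=\bar x_{\SG_{\mu'}\SG_\mu}$, which factors as $\bar x_{\mu'}\sum_{w\in\prescript{\mu'}{}{\SG}_\mu}\bar\pi_w$, and the minimal coset representatives $\prescript{\mu'}{}{\SG}_\mu$ are enumerated directly — they are exactly the permutations you guessed (the $k_\mu$ case is the same dictionary with $A=D_\mu$, the paper's \cref{lem: k_lam act on V^r}). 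So your proposal is repairable in a few lines, but those lines — invoking the dictionary, computing $d_A=\mathrm{id}$, and enumerating the distinguished representatives — must actually be supplied; they are the proof rather than bookkeeping, and your appeal to \cite[\S 7]{JS15} does not substitute for doing it.
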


The proof of \cref{lem: phi} appears in \cref{sec: proof of psi_nr}.

From the natural equivalence in \cref{eq: phi G = F}, it follows that $\Psi_{n,r}(G_{n,r}(F_{\alpha})) \cong D_{\alpha}$.
The following proposition establishes that, for each $\lambda \in \Lambda^{\bullet}(n,r)$, the modules $G_{n,r}(F_{\alpha})$ and $S_{\lambda}$ have isomorphic images under $\Psi_{n,r}$.

\begin{proposition}\label{thm: 0-Schur functor degenerate quantum group}
For each $\lambda \in \Lambda^{\bullet}(n,r)$, we have  $U_0(\mathfrak{gl}_n)$-module isomorphisms
\[
\Psi_{n,r}(S_{\lambda}) \cong D_{\lambda^{+}}.
\]
\end{proposition}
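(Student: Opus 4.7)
The plan is to build an explicit $U_0(\mathfrak{gl}_n)$-equivariant isomorphism by combining the two weight-bijections established earlier. Since $(\lambda^+)^{\bullet}=\lambda$, Lemma \ref{lem: wt bijection} produces a bijection $\wt:\mathrm{QRT}(n,\lambda^+)\xrightarrow{\sim}\Lambda(n,r)_{\preceq\lambda}$; for $\mu$ in this set, let $T_\mu\in\mathrm{QRT}(n,\lambda^+)$ denote the unique quasi-ribbon tableau of weight $\mu$. Composing with the bijection $\ro:\cb(\lambda)\xrightarrow{\sim}\Lambda(n,r)_{\preceq\lambda}$ of Proposition \ref{prop: row bijection}, I would define the $\mathbb{C}$-linear map
\[
\Phi : S_\lambda \longrightarrow D_{\lambda^+}\cong\mathbb{C}[\mathrm{QRT}(n,\lambda^+)],\qquad \overline{e_A}\longmapsto T_{\ro(A)},
\]
which is bijective by construction. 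What remains is to show that $\Phi$ intertwines the actions of the generators $\mathbf{k}_i,\mathbf{e}_i,\mathbf{f}_i$ of $U_0(\mathfrak{gl}_n)$.

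The case $\mathbf{k}_i$ is immediate from Lemma \ref{lem: phi}: the operator $\psi_{n,r}(\mathbf{k}_i)=\sum_{\mu_i=0}k_\mu$ fixes $\overline{e_A}$ when $\ro(A)_i=0$ and kills it otherwise (by \cref{eq: k_lambda act on e_A}), matching the rule \cref{eq: k action on QR} for $\mathbf{k}_i\cdot T_{\ro(A)}$ since $\wt(T_{\ro(A)})_i=\ro(A)_i$. For $\mathbf{e}_i$ and $\mathbf{f}_i$ (treated symmetrically; I focus on $\mathbf{f}_i$), both actions change the weight of a basis vector by $e_{i+1}-e_i$. Because each weight in $\Lambda(n,r)_{\preceq\lambda}$ indexes a single basis element on both sides, it suffices to prove that $f_i\cdot\overline{e_A}$ is nonzero in $S_\lambda$ if and only if $\mathbf{f}_i\cdot T_{\ro(A)}$ is nonzero in $D_{\lambda^+}$, both then necessarily landing in the weight $\ro(A)+e_{i+1}-e_i$ space.

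The core assertion is that both non-vanishing conditions are equivalent to $\ro(A)+e_{i+1}-e_i\in\Lambda(n,r)_{\preceq\lambda}$. On the $\cb(\lambda)$ side, Lemma \ref{lem: multiplication rule}(2) gives $f_i\cdot e_A=e_B$ for $B:=A-E_{i,q}+E_{i+1,q}$ with $q$ the column of the unique nonzero entry of row $i$, and $f_i\cdot\overline{e_A}$ survives in $S_\lambda$ precisely when $B\in\cb(\lambda)$; a case analysis using the column block diagonal structure shows that $B$ leaves $\cb(\lambda)$ exactly when rows $i$ and $i+1$ of $A$ have their nonzero entries in distinct columns, which corresponds to the would-be move straddling a column block boundary and is in turn equivalent to $\ro(B)$ failing to refine $\lambda$. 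On the $D_{\lambda^+}$ side, one must analyse the Kashiwara crystal rule together with the shape constraint $\lambda^+$: the operator $\tilde f_i$ applied to $\mathsf{read}(T_\mu)$ either vanishes at the word level or produces a word whose $\mathsf{tab}$ is a quasi-ribbon tableau of shape $\lambda^+$ precisely when the target weight $\mu+e_{i+1}-e_i$ remains realizable, i.e., lies in $\Lambda(n,r)_{\preceq\lambda}$. This crystal-combinatorial verification, reconciling the two distinct failure modes (crystal vanishing versus shape violation) with the single refinement criterion, is the main obstacle; once established, the two equivalences combine to yield the desired intertwining, completing the proof that $\Phi$ is a $U_0(\mathfrak{gl}_n)$-module isomorphism.
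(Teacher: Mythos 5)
Your proposal is correct and takes essentially the same route as the paper: the same bijection $\row^{-1}\circ\wt:{\rm QRT}(n,\lambda^+)\to\cb(\lambda)$ obtained from \cref{prop: row bijection} and \cref{lem: wt bijection}, the same $\mathbf{k}_i$-check via \cref{eq: k_lambda act on e_A} and \cref{eq: k action on QR}, and the same weight-change argument via \cref{lem: phi} and \cref{eq: row vector f action} for $\mathbf{e}_i,\mathbf{f}_i$. The ``main obstacle'' you flag---reconciling crystal vanishing and shape violation on the ${\rm QRT}$ side with the single refinement criterion---is indeed true (a short case analysis on whether $i$ and $i+1$ lie in the same row range of $T_\mu$ settles it) and is exactly the point the paper's own proof also leaves implicit, relying on the one-dimensionality of each weight space; so your write-up is, if anything, more explicit than the published argument.
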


\begin{proof}
Note that $(\lambda^{+})^{\bullet} = \lambda$. Hence, by Proposition~\cref{prop: row bijection} and Lemma~\cref{lem: wt bijection}, we obtain a bijection
\[
\row^{-1} \circ \wt : {\rm QRT}(n, \lambda^{+}) \to \cb(\lambda).
\]
Let $\Gamma: D_{\lambda} \to \Psi_{n,r}(S_{\lambda})$ be the linear map induced by extending this bijection between the respective bases.

Let $T \in {\rm QRT}(n, \lambda^{+})$. 
Since $\wt(\mathbf{f}_i \cdot T) = \wt(T) -  (\underbrace{0, \ldots, 0}_{i-1}, -1, 1, 0, \ldots, 0)$, we have $\Gamma(\mathbf{f}_i \cdot T) = \mathbf{f}_i  \cdot \Gamma(T)$ by \cref{eq: row vector f action} and \cref{lem: phi}. 
Similarly, $\Gamma(\mathbf{e}_i \cdot T) = \mathbf{e}_i  \cdot \Gamma(T)$.

By \cref{eq: k action on QR}, we have 
\[
\mathbf{k}_i \cdot T = 
\begin{cases}
    T    &   \mbox{if $\wt(T)_i = 0$}, \\ 
    0    &    \mbox{otherwise.}
\end{cases}
\]
On the other hand, for $A \in \cb(\lambda)$, it follows from~\cref{eq: k_lambda act on e_A} that
\[
\sum_{\mu_i = 0} k_\mu \cdot \overline{e_A} = 
\begin{cases}
    \overline{e_A} & \text{if } \ro(A)_i = 0, \\ 
    0 & \text{otherwise}.
\end{cases}
\]
This shows $\Gamma(\mathbf{k}_i \cdot T) =   \mathbf{k}_i \cdot \Gamma(T)$ by \cref{lem: phi}.
Hence, $\Gamma$ is an isomorphism.
\end{proof}

Now, we are ready to state the main theorem.

\begin{theorem}\label{cor: 0-Schur functor 0-Hecke}
    For $\alpha \in \Lambda^{+}(r)$ with $\ell(\alpha)\le n$, we have the ${\mathbf S}_0(n,r)$-module isomorphisms
    \begin{align*}
    G_{n,r}(F_{\alpha}) \cong S_{\alpha^{\bullet}} \quad \text{ and } \quad 
    G_{n,r}(R_{\alpha}) \cong P_{\alpha^{\bullet}}.
    \end{align*}
    
\end{theorem}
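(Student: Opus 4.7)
The plan is to deduce both isomorphisms from ingredients already in place: the identity $\Psi_{n,r}\circ G_{n,r}=F_{n,r}$ from \eqref{eq: phi G = F}, the identification $\Psi_{n,r}(S_\lambda)\cong D_{\lambda^+}$ from \cref{thm: 0-Schur functor degenerate quantum group}, the fact (established before \cref{thm: 0-Schur functor degenerate quantum group}) that $G_{n,r}(F_\alpha)$ is simple and $G_{n,r}(R_\alpha)$ is indecomposable whenever $\ell(\alpha)\le n$, the projectivity of $G_{n,r}(R_\alpha)$ recorded in \cref{how to prove projectivity}(3), and the classifications of simples and of projective indecomposables of $\mathbf{S}_0(n,r)$ in \cref{thm: Slambda is simple} and \cref{thm: pim complete set}.

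For the first isomorphism, I would start from the fact that $G_{n,r}(F_\alpha)$ is simple. By \cref{thm: Slambda is simple}, there is a unique $\mu\in\Lambda^\bullet(n,r)$ with $G_{n,r}(F_\alpha)\cong S_\mu$. Applying $\Psi_{n,r}$ and combining \eqref{eq: phi G = F} with \cref{thm: 0-Schur functor degenerate quantum group} yields
\[
D_\alpha \;=\; F_{n,r}(F_\alpha) \;\cong\; \Psi_{n,r}(S_\mu) \;\cong\; D_{\mu^+}.
\]
Since the modules $\{D_\beta \mid \beta\in\Lambda^+(r),\ \ell(\beta)\le n\}$ are pairwise non-isomorphic (as recorded immediately after \cref{Krob thibon classification}), this forces $\mu^+=\alpha$, and hence $\mu=\alpha^\bullet$ via the bijection \eqref{relation between weak and strong compositions}.

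For the second isomorphism, \cref{how to prove projectivity}(3) tells me that $G_{n,r}(R_\alpha)$ is a projective indecomposable $\mathbf{S}_0(n,r)$-module, so by \cref{thm: pim complete set} there exists a unique $\nu\in\Lambda^\bullet(n,r)$ with $G_{n,r}(R_\alpha)\cong P_\nu$. Now $G_{n,r}={V_0}^{\otimes r}\otimes_{\mathbf{H}_r(0)}-$ is right exact, so the canonical surjection $R_\alpha\twoheadrightarrow \operatorname{top}(R_\alpha)\cong F_\alpha$ produces a surjection
\[
P_\nu \;\cong\; G_{n,r}(R_\alpha) \;\twoheadrightarrow\; G_{n,r}(F_\alpha) \;\cong\; S_{\alpha^\bullet},
\]
where the last isomorphism is the first part of the theorem. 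Because $P_\nu$ admits a unique simple quotient, namely $\operatorname{top}(P_\nu)\cong S_\nu$, this forces $\nu=\alpha^\bullet$, completing the proof. I do not expect any serious obstacle: all the substantive input—the simplicity of the $S_\lambda$, the calculation of $\Psi_{n,r}(S_\lambda)$, and the projective indecomposability of $G_{n,r}(R_\alpha)$—has been done in the preceding sections, and the argument here is a purely formal combination of these facts with the right exactness of a tensor functor.
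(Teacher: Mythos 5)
Your proposal is correct and takes essentially the same approach as the paper: both parts rest on the identical inputs---simplicity of $G_{n,r}(F_\alpha)$ via \cref{eq: phi G = F}, the computation $\Psi_{n,r}(S_\lambda)\cong D_{\lambda^+}$ of \cref{thm: 0-Schur functor degenerate quantum group} together with the pairwise non-isomorphism of the $D_\beta$, the projective indecomposability of $G_{n,r}(R_\alpha)$ from \cref{how to prove projectivity}, and right exactness of $G_{n,r}$ applied to $R_\alpha\twoheadrightarrow F_\alpha$. The only cosmetic difference is in the final formal step of the second part: the paper lifts $G_{n,r}(\pi^1)$ through the projective cover $P_{\alpha^\bullet}\twoheadrightarrow S_{\alpha^\bullet}$ and splits the resulting surjection, whereas you invoke the classification of \cref{thm: pim complete set} and the uniqueness of the simple quotient $\operatorname{top}(P_\nu)\cong S_\nu$ from \cref{thm: Slambda is simple}---an equivalent argument.
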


\begin{proof}    
By \cref{eq: phi G = F}, we see that $G_{n,r}(F_{\alpha})$ is simple.
Assume that $G_{n,r}(F_{\alpha}) \cong S_{\beta^{\bullet}}$
for some $\beta\in \Lambda^+(r)$ with $\ell(\beta) \le n$.
By \cref{thm: 0-Schur functor degenerate quantum group}, we see that 
$$ D_\alpha \cong \Psi_{n,r}(G_{n,r}(F_{\alpha})) \cong \Psi_{n,r}(S_{\beta^{\bullet}})\cong D_\beta$$ as $U_0(\mathfrak{gl}_n)$-modules.
This shows that $\alpha=\beta$.

Let $\pi^1: R_\alpha \twoheadrightarrow F_\alpha$ be the projective cover (hence an essential epimorphism).
By right exactness of $G_{n,r}$, applying $G_{n,r}$ to the exact sequence
\[
R_\alpha \xrightarrow{\ \pi^1\ } F_\alpha \longrightarrow 0
\]
yields an exact sequence
\[
G_{n,r}(R_\alpha)\xrightarrow{\,G_{n,r}(\pi^1)\,} G_{n,r}(F_\alpha)\longrightarrow 0,
\]
so $G_{n,r}(\pi^1): G_{n,r}(R_\alpha)\twoheadrightarrow G_{n,r}(F_\alpha)\cong S_{\alpha^{\bullet}}$ is an epimorphism.
Set \(Q:=G_{n,r}(R_\alpha)\).
By \cref{how to prove projectivity}, $Q=G_{n,r}(R_{\alpha})$ is a projective indecomposable $S_0(n,r)$-module.

Let $\pi^2: P_{\alpha^{\bullet}} \twoheadrightarrow S_{\alpha^{\bullet}}$ be the projective cover (hence an essential epimorphism).
By the universal property of projective covers, there exists a morphism
\[
g: Q\longrightarrow P_{\alpha^{\bullet}}\quad\text{such that}\quad \pi^2\circ g = G_{n,r}(\pi^1).
\]
Since $\pi^2$ is essential and $\pi^2\circ g$ is surjective, it follows that $g$ is surjective.
As $P_{\alpha^{\bullet}}$ is projective, the surjection $g$ splits, so $P_{\alpha^{\bullet}}$ is a direct summand of $Q$.
Since $Q$ is indecomposable, we conclude $Q \cong P_{\alpha^{\bullet}}$.

And, by \cref{how to prove projectivity}, 
we see that $G_{n,r}(R_{\alpha})$ is a projective indecomposable $S_0(n,r)$-module.
Thus, the desired result can be obtained in the same manner as above.
\end{proof}

The following corollary follows immediately from the combination of \cref{cor: 0-Schur functor 0-Hecke} and \cref{eq: phi G = F}.

\begin{corollary}\label{thm: 0-Schur functor degenerate quantum group PIM}
For each $\lambda \in \Lambda^{\bullet}(n,r)$, we have  $U_0(\mathfrak{gl}_n)$-module isomorphisms
\[
\Psi_{n,r}(P_{\lambda}) \cong N_{\lambda^{+}}.
\]
\end{corollary}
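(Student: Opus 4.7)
The plan is to deduce the isomorphism directly from the two ingredients indicated, without any new construction. Let $\lambda \in \Lambda^{\bullet}(n,r)$ and set $\alpha := \lambda^{+} \in \Lambda^{+}(r)$, so that $\ell(\alpha) \le n$ and $\alpha^{\bullet} = \lambda$ by the natural bijections in \cref{relation between weak and strong compositions}.

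The first step is to invoke \cref{cor: 0-Schur functor 0-Hecke}, which yields an $\mathbf{S}_0(n,r)$-module isomorphism
\[
G_{n,r}(R_{\alpha}) \;\cong\; P_{\alpha^{\bullet}} \;=\; P_{\lambda}.
\]
The second step is to apply the pullback functor $\Psi_{n,r}$ to both sides. Since $\Psi_{n,r}$ preserves isomorphisms of modules, we obtain
\[
\Psi_{n,r}(P_{\lambda}) \;\cong\; \Psi_{n,r}\bigl(G_{n,r}(R_{\alpha})\bigr).
\]
The third step is to apply the identity $\Psi_{n,r} \circ G_{n,r} = F_{n,r}$ from \cref{eq: phi G = F}, which gives
\[
\Psi_{n,r}\bigl(G_{n,r}(R_{\alpha})\bigr) \;=\; F_{n,r}(R_{\alpha}) \;=\; {V_0}^{\otimes r} \otimes_{\mathbf{H}_r(0)} R_{\alpha} \;=\; N_{\alpha} \;=\; N_{\lambda^{+}},
\]
using the definition of $N_{\alpha}$ given just after \cref{functor from H to U}. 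Chaining these three isomorphisms yields the desired conclusion.

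There is no real obstacle here: the only subtlety is keeping the indexing straight, namely that $\lambda = \alpha^{\bullet}$ for $\alpha = \lambda^{+}$, which is precisely the content of the bijections in \cref{relation between weak and strong compositions}. Everything else is a direct substitution of the equalities $\Psi_{n,r} \circ G_{n,r} \cong F_{n,r}$ and $G_{n,r}(R_{\alpha}) \cong P_{\alpha^{\bullet}}$.
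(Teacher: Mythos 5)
Your proof is correct and matches the paper exactly: the paper also derives this corollary immediately from \cref{cor: 0-Schur functor 0-Hecke} together with the relation $\Psi_{n,r}\circ G_{n,r}=F_{n,r}$ of \cref{eq: phi G = F}, using $F_{n,r}(R_{\lambda^{+}})=N_{\lambda^{+}}$ by definition. Your handling of the indexing via $\lambda=(\lambda^{+})^{\bullet}$ is the only bookkeeping point, and you got it right.
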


As a second corollary of \cref{cor: 0-Schur functor 0-Hecke}, we determine the socles of  the projective indecomposable $\mathbf{S}_{n,r}$-modules.
Let $A$ be a $\mathbb{C}$-algebra and $M$ a finite-dimensional $A$-module. The \emph{socle} of $M$, denoted $\operatorname{soc}(M)$, is defined as the sum of all simple submodules of $M$.
For $\lambda = (\lambda_1,\lambda_2, \ldots, \lambda_l, 0,\ldots, 0)$ in $\Lambda^{\bullet}(n,r)$, let 
$\lambda^{\rm r} = (\lambda_l,\ldots, \lambda_{2},\lambda_1, 0,\ldots, 0)$ and thus 
$\lambda^{\rm r}=((\lambda^+)^{\rm r})^{\bullet}$.

\begin{corollary} \label{socle of Pim of Schur}
    Let $\lambda \in \Lambda^{\bullet}(n,r)$. Then we have the ${\mathbf S}_0(n,r)$-module isomorphism
    \begin{align*}
    \mathrm{soc}(P_{\lambda}) \cong S_{\lambda^r}.
    \end{align*}
\end{corollary}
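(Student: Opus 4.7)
The plan is to translate the problem, via the Morita equivalence $A_{n,r}$ from \cref{how to prove projectivity}, into a socle computation inside the corner algebra $e\mathbf{H}_r(0)e$ (with $e$ as in \cref{condition of e}), and then to deduce the answer from the Frobenius structure of $\mathbf{H}_r(0)$ by an idempotent absorption argument.

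By \cref{cor: 0-Schur functor 0-Hecke} together with \cref{how to prove projectivity}(1), we have $P_\lambda \cong G_{n,r}(R_{\lambda^+}) \cong A_{n,r}(eR_{\lambda^+})$, where $eR_{\lambda^+}=e\mathbf{H}_r(0)e_{\lambda^+}$. Since $A_{n,r}$ is an equivalence of categories it preserves socles, and moreover $A_{n,r}(eF_{(\lambda^+)^r}) \cong G_{n,r}(F_{(\lambda^+)^r}) \cong S_{((\lambda^+)^r)^\bullet} = S_{\lambda^r}$ by \cref{cor: 0-Schur functor 0-Hecke} (using $\ell((\lambda^+)^r)=\ell(\lambda^+)\le n$). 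Thus it suffices to prove
\[
\operatorname{soc}_{e\mathbf{H}_r(0)e}\bigl(eR_{\lambda^+}\bigr) \cong eF_{(\lambda^+)^r}.
\]

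Recall that $\mathbf{H}_r(0)$ is Frobenius with Nakayama automorphism $\autophi$ (\cite{05Fayers}; see \cref{automorphism of Fayers}), so $R_{\lambda^+}$ is injective indecomposable with simple $\mathbf{H}_r(0)$-socle $F_{(\lambda^+)^r}$; in particular every nonzero $\mathbf{H}_r(0)$-submodule of $R_{\lambda^+}$ contains $F_{(\lambda^+)^r}$. Since $\ell((\lambda^+)^r)\le n$, the idempotent $e$ acts as the identity on $F_{(\lambda^+)^r}$, so $F_{(\lambda^+)^r}=eF_{(\lambda^+)^r}\subseteq eR_{\lambda^+}$ is a simple $e\mathbf{H}_r(0)e$-submodule.

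The main step is an absorption argument: every nonzero $e\mathbf{H}_r(0)e$-submodule $N$ of $eR_{\lambda^+}$ contains $F_{(\lambda^+)^r}$. Setting $M:=\mathbf{H}_r(0)N\subseteq R_{\lambda^+}$, $M$ is nonzero and therefore contains $F_{(\lambda^+)^r}$. For $h\in \mathbf{H}_r(0)$ and $n\in N$, the identity $n=en$ and associativity yield
\[
e(hn)=eh(en)=(ehe)n \in N,
\]
because $ehe\in e\mathbf{H}_r(0)e$ stabilizes $N$. Hence $eM\subseteq N$, and so $F_{(\lambda^+)^r}=eF_{(\lambda^+)^r}\subseteq eM\subseteq N$. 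This forces $F_{(\lambda^+)^r}$ to be the unique minimal nonzero $e\mathbf{H}_r(0)e$-submodule of $eR_{\lambda^+}$, whence $\operatorname{soc}(eR_{\lambda^+})=eF_{(\lambda^+)^r}$, completing the proof.

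The only delicate ingredient is the Fayers/Frobenius identification of $\operatorname{soc}(R_{\lambda^+})$ as $F_{(\lambda^+)^r}$ under the paper's conventions for $\autophi$; the rest is routine idempotent manipulation once the Morita equivalence is in hand. I anticipate no serious obstacle beyond verifying that the Nakayama-automorphism convention matches the labeling of $R_\alpha$ and $F_\alpha$ used throughout.
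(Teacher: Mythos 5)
Your proof is correct, but it reaches the conclusion by a genuinely different mechanism than the paper's. The paper's argument is a short squeeze: since $V_0^{\otimes r}$ is projective over $\mathbf{H}_r(0)$, the functor $G_{n,r}$ is exact, so \cref{cor: 0-Schur functor 0-Hecke} together with $\operatorname{soc}(R_{\lambda^+})\cong F_{(\lambda^+)^{\rmr}}$ gives an embedding $S_{\lambda^{\rmr}}\hookrightarrow \operatorname{soc}(P_\lambda)$, and equality then follows from the fact that $\mathbf{S}_0(n,r)$ is self-injective (\cite[Proposition 4.4]{DY12}), which forces the socle of the projective indecomposable $P_\lambda$ to be simple (\cite[Section 1.6, Exercises 2]{91Benson}). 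You instead prove the simplicity of the socle by hand: you transport the problem through the Morita equivalence $A_{n,r}$ of \cref{how to prove projectivity} (correctly reading the natural isomorphism there as $A_{n,r}\circ B_{n,r}\cong G_{n,r}$ --- the composition as printed in the theorem's statement does not typecheck, so your silent correction is the right one), and your absorption argument $e(hn)=(ehe)n$ shows that every nonzero $e\mathbf{H}_r(0)e$-submodule of $e\mathbf{H}_r(0)e_{\lambda^+}$ contains $eF_{(\lambda^+)^{\rmr}}$, using that the simple socle of the injective indecomposable $R_{\lambda^+}$ is essential and that $e$ acts as the identity on $F_{(\lambda^+)^{\rmr}}$ (the routine fact $e_\alpha F_\beta=\delta_{\alpha,\beta}F_\beta$ combined with $\ell((\lambda^+)^{\rmr})=\ell(\lambda^+)\le n$, which you should perhaps spell out). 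Both routes share the inputs $\operatorname{soc}(R_{\lambda^+})\cong F_{(\lambda^+)^{\rmr}}$ (used elsewhere in the paper as well, e.g.\ in \cref{symmetries of H cartan}) and \cref{cor: 0-Schur functor 0-Hecke}. What your route buys is independence from the self-injectivity of $\mathbf{S}_0(n,r)$: you effectively reprove, for the corner modules $e\mathbf{H}_r(0)e_{\lambda^+}$, that the socle is simple, by elementary idempotent manipulation; the cost is invoking the full Morita machinery of \cref{how to prove projectivity}, whereas the paper's proof is shorter but leans on the external citation to \cite{DY12}.
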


\begin{proof}
Since ${V_0}^{\otimes r}$ is a projective 
${\mathbf H}_0(r)$-module, the functor
$$
G_{n,r} := {V_0}^{\otimes r} \otimes_{{\mathbf H}_0(r)} (-)
$$
is exact. 
Thus \cref{cor: 0-Schur functor 0-Hecke} implies that
$$
G_{n,r}(\mathrm{soc}(R_{\lambda^+})) \subseteq \mathrm{soc}(G_{n,r}(R_{\lambda^+})).
$$
Using the isomorphisms $\mathrm{soc}(R_{\lambda^+}) \cong F_{(\lambda^+)^{\mathrm{r}}}$ and $ {V_0}^{\otimes r} \otimes_{{\mathbf H}_0(r)} R_{\lambda^+}\cong P_{\lambda}$, it follows that
$$
S_{((\lambda^+)^{\rm r})^{\bullet}}=S_{\lambda^{\mathrm{r}}} \hookrightarrow \mathrm{soc}(P_\lambda).
$$
Since ${\mathbf S}_0(n,r)$ is self-injective by \cite[Proposition 4.4]{DY12}, \cite[Section 1.6, Exercises 2]{91Benson} implies that the socle of the projective indecomposable module $P_\lambda$ is simple.
Therefore, $\mathrm{soc}(P_\lambda) \cong S_{\lambda^{\mathrm{r}}}$, as desired.
\end{proof}

\subsection{Proof of \cref{lem: phi}}\label{sec: proof of psi_nr}

We adopt the notation of \cite{DY12}, summarized as follows.

\begin{itemize}
  \item For $\lambda = (\lambda_1,\ldots,\lambda_n) \in \Lambda(n,r)$ and $1 \leq i \leq n$, set
  \[
  R_i^\lambda = \{\, x \mid \lambda_1 + \cdots + \lambda_{i-1} + 1 \leq x \leq \lambda_1 + \cdots + \lambda_i \,\},
  \]
  with the convention that $R_i^\lambda = \varnothing$ if $\lambda_i = 0$.  
  This yields a disjoint union decomposition
  \[
  \{1,2,\ldots,r\} = R_1^\lambda \cup R_2^\lambda \cup \cdots \cup R_n^\lambda.
  \]

  \item For each $A \in M_n(r)$, let $\lambda = \mathrm{row}(A)$ and $\mu = \mathrm{col}(A)$.  
  Choose $w_A \in \mathfrak{S}_r$ such that for $1 \leq i,j \leq n$,
  \[
  a_{i,j} = \lvert R_i^\lambda \cap (w_A R_j^\mu)\rvert.
  \]

  \item Let $d_A$ denote the shortest element in the double coset $\mathfrak{S}_\lambda w_A \mathfrak{S}_\mu$.

  \item Define
  \[
  \bar x_{\mathfrak{S}_\lambda w \mathfrak{S}_\mu} = \sum_{x \in \mathfrak{S}_\lambda w \mathfrak{S}_\mu} \bar\pi_x.
  \]

  \item For $\lambda,\mu \in \Lambda(n,r)$ and $w \in \mathfrak{S}_r$, define
  \[
  \phi_{\lambda,\mu}^w : \bigoplus_{\nu \in \Lambda(n,r)} \bar x_\nu H_0(r) \;\longrightarrow\;
  \bigoplus_{\nu \in \Lambda(n,r)} \bar x_\nu H_0(r), \qquad 
  \bar x_\nu h \longmapsto \delta_{\mu,\nu}\,\bar x_{\mathfrak{S}_\lambda w \mathfrak{S}_\mu}\,h.
  \]
\end{itemize}

By \cite[(2.1.2)]{DY12}, for each $A \in M_n(r)$ we have
\[
e_A = \phi^{d_A}_{\mathrm{row}(A),\,\mathrm{col}(A)}.
\]
For a word ${\bf i} = [i_1, \ldots, i_r]$ with $i_j \in \{1,2,\ldots,n\}$, set 
\[
{\bar \xi}_{\bf i} := {\bar \xi}_{i_1} \otimes \cdots \otimes {\bar \xi}_{i_r},
\]
and for $\lambda \in \Lambda(n,r)$, define
\[
{\bf i}_{\lambda} := [\underbrace{1,\ldots,1}_{\lambda_1}, \ldots, \underbrace{n,\ldots,n}_{\lambda_n}].
\]
For example, ${\bar \xi}_{{\bf i}_{(2,0,1)}} = {\bar \xi}_1 \otimes {\bar\xi_1} \otimes {\bar \xi}_3$.
The isomorphism
\[
\phi_0: \bigoplus_{\lambda \in \Lambda(n, r)} {\bar x}_\lambda {\mathbf H}_r(0) \stackrel{\cong}{\longrightarrow} {V_0}^{\otimes r}
\]
from \cref{isomorphism phi 0} is given by
\[
{\bar x}_\lambda \, h \mapsto {\bar \xi}_{{\bf i}_\lambda} \, h, \qquad h \in {\mathbf H}_r(0)
\]
(see \cite[Remark 2.5]{DY12}).

For $\lambda \in \Lambda(n,r)$, let $(V_0^{\otimes r})_{\lambda} \subseteq V_0^{\otimes r}$ be the $\mathbb{C}$-span of $\{{\bar \xi}_{\bf i} \mid \mathrm{cont}({\bf i}) = \lambda \}$, where 
\[
\mathrm{cont}({\bf i}) = (\#1 \text{ in } {\bf i}, \ldots, \#n \text{ in } {\bf i}).
\]
Then
\[
V_0^{\otimes r} = \bigoplus_{\lambda \in \Lambda(n,r)} (V_0^{\otimes r})_{\lambda},
\]
and $\phi_0$ restricts to an isomorphism
\[
\phi_0\mid_{{\bar x}_\lambda {\mathbf H}_r(0)}: {\bar x}_\lambda {\mathbf H}_r(0) \stackrel{\cong}{\longrightarrow} (V_0^{\otimes r})_{\lambda}.
\]

By conjugating $\phi_0$, we may regard each $e_A \in \mathbf{S}_0(n,r)$ as an element of $\operatorname{End}_{{\mathbf H}_r(0)}(V_0^{\otimes r})$.

\begin{lemma}\label{lem: k_lam act on V^r}
For $\lambda \in \Lambda(n,r)$ and ${\bf i} = [i_1,\ldots,i_r]$ a word with $i_j \in \{1,2,\ldots,n\}$, we have 
\[
k_{\lambda}({\bar \xi}_{\bf i}) = 
\begin{cases}
{\bar \xi}_{\bf i}, & \text{if $\mathrm{cont}({\bf i}) = \lambda$}, \\ 
0, & \text{otherwise}.
\end{cases}
\]
\end{lemma}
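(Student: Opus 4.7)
The plan is to unwind the definition of $k_\lambda=e_{D_\lambda}$ as an endomorphism of $\bigoplus_{\nu\in\Lambda(n,r)}\bar x_\nu\mathbf{H}_r(0)$ and transport it across $\phi_0$ to $V_0^{\otimes r}$.

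First, I would identify the combinatorial data attached to $A=D_\lambda$. Here $\mathrm{row}(D_\lambda)=\mathrm{col}(D_\lambda)=\lambda$, and since the identity element $e\in\mathfrak{S}_r$ satisfies
\[
|R_i^\lambda\cap(e\cdot R_j^\lambda)|=|R_i^\lambda\cap R_j^\lambda|=\delta_{ij}\lambda_i=(D_\lambda)_{i,j},
\]
one may choose $w_{D_\lambda}=e$. The double coset $\mathfrak{S}_\lambda\cdot e\cdot\mathfrak{S}_\lambda$ then coincides with $\mathfrak{S}_\lambda$, so $d_{D_\lambda}=e$ and $\bar x_{\mathfrak{S}_\lambda w_{D_\lambda}\mathfrak{S}_\lambda}=\bar x_\lambda$. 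Applying the formula $e_A=\phi^{d_A}_{\mathrm{row}(A),\mathrm{col}(A)}$ recorded above (from \cite[(2.1.2)]{DY12}), this yields $k_\lambda=\phi^{e}_{\lambda,\lambda}$.

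Next, the defining formula for $\phi^{w}_{\lambda,\mu}$ gives, for every $\nu\in\Lambda(n,r)$ and $h\in\mathbf{H}_r(0)$,
\[
k_\lambda(\bar x_\nu h)=\delta_{\lambda,\nu}\,\bar x_\lambda\,h,
\]
so $k_\lambda$ acts as the identity on the summand $\bar x_\lambda\mathbf{H}_r(0)$ and annihilates the remaining summands $\bar x_\nu\mathbf{H}_r(0)$ for $\nu\neq\lambda$. Since $\phi_0$ restricts to an isomorphism $\bar x_\mu\mathbf{H}_r(0)\xrightarrow{\sim}(V_0^{\otimes r})_\mu$ for each $\mu$, and since $k_\lambda$ is regarded as an element of $\operatorname{End}_{\mathbf{H}_r(0)}(V_0^{\otimes r})$ by conjugation with $\phi_0$, transporting the above description through $\phi_0$ identifies $k_\lambda$ with the projection onto $(V_0^{\otimes r})_\lambda$ relative to the weight decomposition $V_0^{\otimes r}=\bigoplus_\mu(V_0^{\otimes r})_\mu$. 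Because $\bar\xi_{\bf i}\in(V_0^{\otimes r})_{\mathrm{cont}({\bf i})}$, the claimed formula follows immediately.

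There is no substantive obstacle here; the argument is a bookkeeping exercise in definitions. The only place warranting care is the double-coset reduction $\mathfrak{S}_\lambda\cdot e\cdot\mathfrak{S}_\lambda=\mathfrak{S}_\lambda$, which ensures $\bar x_{\mathfrak{S}_\lambda\cdot e\cdot\mathfrak{S}_\lambda}=\bar x_\lambda$, and the verification that $\phi_0$ respects the two weight decompositions so that the projector description transfers cleanly.
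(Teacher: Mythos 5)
Your proposal is correct and follows essentially the same route as the paper's proof: take $A=D_\lambda$, observe $w_A=d_A=\mathrm{id}$ so that $\bar x_{\mathfrak{S}_\lambda w_A\mathfrak{S}_\lambda}=\bar x_\lambda$ and $k_\lambda(\bar x_\nu h)=\delta_{\lambda,\nu}\bar x_\lambda h$, then transport through $\phi_0$ using the weight decomposition $V_0^{\otimes r}=\bigoplus_\mu (V_0^{\otimes r})_\mu$. Your extra details (the explicit check $|R_i^\lambda\cap R_j^\lambda|=\delta_{ij}\lambda_i$ and the double-coset reduction $\mathfrak{S}_\lambda\cdot e\cdot\mathfrak{S}_\lambda=\mathfrak{S}_\lambda$) simply make explicit what the paper asserts in one line.
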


\begin{proof}
Let $A = D_{\lambda}$. Then $\row(A) = \col(A) = \lambda$, $w_A = \mathrm{id}$ and $d_A = \mathrm{id}$. Therefore 
\[
\bar x_{\mathfrak{S}_{\row(A)} w_A \mathfrak{S}_{\col(A)}} = {\bar x}_{\lambda},
\]
and thus 
\[
\phi^{d_A}_{\mathrm{row}(A),\,\mathrm{col}(A)} ({\bar x}_{\nu} \, h) = \delta_{\lambda, \nu} \, {\bar x}_{\lambda} \, h
\]
for any $\nu \in \Lambda(n,r)$ and $h \in {\mathbf H}_r(0)$. 
This proves the assertion.
\end{proof}

Let ${\bf i} = [i_1,\ldots,i_r]$ be a word with $i_j \in \{1,2,\ldots,n\}$.  
To study the elements $e_{i,\lambda}, f_{i,\lambda} \in S_0(n,r)$, we introduce the notions of $e_i$- and $f_i$-descendants associated with ${\bf i}$.

\begin{definition}
\leavevmode
\begin{itemize}
  \item A word ${\bf j} = [j_1,\ldots,j_r]$ with $j_k \in \{1,2,\ldots,n\}$ is called \emph{$e_i$-descendant of ${\bf i}$} the following conditions hold:
  \begin{itemize}
    \item There exists exactly one position $k$ such that $i_k = i+1$ and $j_k = i$.
    \item For all $\ell > k$, we have $i_\ell \neq i$.
    \item For all other positions $m \neq k$, we have $j_m = i_m$.
  \end{itemize}
  We denote by $E_i({\bf i})$ the set of all $e_i$-descendants of ${\bf i}$.

  \item A word ${\bf j} = [j_1,\ldots,j_r]$ with $j_k \in \{1,2,\ldots,n\}$ is called \emph{$f_i$-descendant of ${\bf i}$} the following conditions hold:
  \begin{itemize}
    \item There exists exactly one position $k$ such that $i_k = i$ and $j_k = i+1$.
    \item For all $\ell < k$, we have $i_\ell \neq i+1$.
    \item For all other positions $m \neq k$, we have $j_m = i_m$.
  \end{itemize}
  We denote by $F_i({\bf i})$ the set of all $f_i$-descendants of ${\bf i}$.
\end{itemize}
\end{definition}

For instance,
\begin{align*}
E_3([1,3,4,3,1,4,2,4]) &= 
\{\, [1,3,4,3,1,3,2,4],\ [1,3,4,3,1,4,2,3]\,\},\\
F_3([1,3,4,3,1,4,2,4]) &= 
\{\, [1,4,4,3,1,4,2,4]\,\}.    
\end{align*}

\begin{lemma}\label{lem: ei lam fi lam act on V^r}
Let $\lambda \in \Lambda(n,r)$, $1 \le i \le n-1$, and let ${\bf i} = [i_1,\ldots,i_r]$ be a word with $i_j \in \{1,2,\ldots,n\}$. Then
\[
e_{i,\lambda}({\bar \xi}_{\bf i}) = 
\begin{cases}
\sum_{{\bf j} \in E_i({\bf i})}{\bar \xi}_{\bf j}, & \text{if $\mathrm{cont}({\bf i}) = \lambda$}, \\ 
0, & \text{otherwise},
\end{cases}
\]
and
\[
f_{i,\lambda}({\bar \xi}_{\bf i}) = 
\begin{cases}
\sum_{{\bf j} \in F_i({\bf i})}{\bar \xi}_{\bf j}, & \text{if $\mathrm{cont}({\bf i}) = \lambda$}, \\ 
0, & \text{otherwise}.
\end{cases}
\]
\end{lemma}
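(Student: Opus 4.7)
It suffices to treat the case of $e_{i,\lambda}$; the argument for $f_{i,\lambda}$ is symmetric. Set $A := D_\lambda - E_{i+1,i+1}+E_{i,i+1}$, so that $\row(A) = \lambda + \epsilon_i - \epsilon_{i+1}$ and $\col(A)=\lambda$; a direct unpacking of the defining identity $a_{i,j} = |R_i^{\row(A)} \cap w_A R_j^{\col(A)}|$ shows one may take $w_A = d_A = \mathrm{id}$. Via the realization $e_A = \phi^{d_A}_{\row(A),\col(A)}$ from \cite[(2.1.2)]{DY12}, this yields
\[
e_{i,\lambda}(\bar{x}_\nu h) \;=\; \delta_{\nu,\lambda}\,\bar{x}_{\mathfrak{S}_{\row(A)}\mathfrak{S}_\lambda}\,h
\qquad(\nu\in\Lambda(n,r),\ h\in\mathbf{H}_r(0)).
\]
I would then introduce the auxiliary $\mathbb{C}$-linear endomorphism $E^{\mathrm{comb}}_{i,\lambda}$ of $V_0^{\otimes r}$ defined by the right-hand side of the claimed formula, and prove $e_{i,\lambda}=E^{\mathrm{comb}}_{i,\lambda}$ by verifying three properties: (i) both maps vanish on $(V_0^{\otimes r})_\mu$ for $\mu\neq\lambda$, (ii) both commute with the right $\mathbf{H}_r(0)$-action, and (iii) they agree on the cyclic generator $\bar\xi_{{\bf i}_\lambda}$ of $(V_0^{\otimes r})_\lambda$.

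Property (i) for $e_{i,\lambda}$ follows from $e_{i,\lambda}=e_{i,\lambda}\,k_\lambda$ (a consequence of $\col(A)=\lambda$ via \cref{eq: k_lambda act on e_A}) combined with \cref{lem: k_lam act on V^r}, while for $E^{\mathrm{comb}}_{i,\lambda}$ it is built into the definition. Property (ii) is automatic for $e_{i,\lambda}$ under the identification $\mathbf{S}_0(n,r)\cong \End_{\mathbf{H}_r(0)}(V_0^{\otimes r})$; for $E^{\mathrm{comb}}_{i,\lambda}$ the identity $E^{\mathrm{comb}}_{i,\lambda}(\bar\xi_{\bf i}\cdot \bar\pi_j)=E^{\mathrm{comb}}_{i,\lambda}(\bar\xi_{\bf i})\cdot \bar\pi_j$ is checked by a case analysis on $(i_j,i_{j+1})$. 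The ``no $i$ to the right of the changed position'' clause of $E_i({\bf i})$ is precisely what makes this compatible: when $i_j = i_{j+1}$ the summands either vanish individually or occur in pairs that cancel (the pair arises when both $p=j$ and $p=j+1$ are admissible changed positions, the $p=j$ term producing $\bar\xi_{{\bf k}^{\sigma_j}}$ and the $p=j+1$ term producing $-\bar\xi_{{\bf k}'}$ with ${\bf k}^{\sigma_j}={\bf k}'$); when $i_j>i_{j+1}$ every descendant ${\bf k}$ still satisfies $k_j>k_{j+1}$, so both sides acquire the sign $-1$; and when $i_j<i_{j+1}$ the sets $E_i({\bf i})$ and $E_i({\bf i}^{\sigma_j})$ correspond bijectively through the position-interchange.

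For property (iii), \cref{isomorphism phi 0} realizes $(V_0^{\otimes r})_\lambda$ as the cyclic right $\mathbf{H}_r(0)$-module $\bar{x}_\lambda\,\mathbf{H}_r(0)$ with generator $\bar\xi_{{\bf i}_\lambda}$. Let $a:=\lambda_1+\cdots+\lambda_i$ and $G:=\mathfrak{S}_{\{a+1,\ldots,a+\lambda_{i+1}\}}$. A direct calculation gives $\mathfrak{S}_{\row(A)}\cap G=\{1\}$, $\mathfrak{S}_{\row(A)}\mathfrak{S}_\lambda=\mathfrak{S}_{\row(A)}\cdot G$, and length additivity, producing the factorization $\bar{x}_{\mathfrak{S}_{\row(A)}\mathfrak{S}_\lambda} = \bar{x}_{\row(A)}\,\bar{x}_G$ in $\mathbf{H}_r(0)$. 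Transporting along $\phi_0$ and expanding $\bar\xi_{{\bf i}_{\row(A)}}\cdot\bar{x}_G$ via \cref{eq: Hecke right action on V^r q=0} produces exactly the sum over positions $k\in\{a+1,\ldots,a+\lambda_{i+1}\}$ at which the unique $i$ of the distinguished block can land. Since the rightmost $i$ of ${\bf i}_\lambda$ occupies position $a$, these positions index precisely the elements of $E_i({\bf i}_\lambda)$, establishing (iii).

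The main obstacle is the right $\mathbf{H}_r(0)$-linearity in step (ii): the locality condition defining $E_i({\bf i})$ interacts delicately with the sign $-\bar\xi_{\bf i}$ produced by the $0$-Hecke descent rule, so one must carefully track which summands are annihilated, which are sent to each other, and which are paired up for cancellation. Once this combinatorial verification is complete, (i), (ii), (iii) together force $e_{i,\lambda}=E^{\mathrm{comb}}_{i,\lambda}$, and the analogous chain of arguments handles $f_{i,\lambda}$.
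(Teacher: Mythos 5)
Your strategy is sound and genuinely different from the paper's. After the common setup ($A=D_\lambda-E_{i+1,i+1}+E_{i,i+1}$, $w_A=d_A=\mathrm{id}$, $e_A=\phi^{\mathrm{id}}_{\row(A),\col(A)}$), the paper computes $e_{i,\lambda}({\bar\xi}_{\bf i})$ directly for an arbitrary word ${\bf i}$: it expands ${\bar x}_{\mathfrak{S}_{\lambda^{i-}}\mathfrak{S}_\lambda}={\bar x}_{\lambda^{i-}}\sum_j{\bar\pi}_{x_j}$ over the minimal coset representatives $x_j=s_{b_i+1}\cdots s_{b_i+j}$ of ${}^{\lambda^{i-}}\mathfrak{S}_\lambda$, writes ${\bar\xi}_{\bf i}={\bar\xi}_{{\bf i}_\lambda}{\bar\pi}_w$ for the unique minimal $w$, and then tracks each summand ${\bar\xi}_{{\bf i}_\lambda^{(i,k)}}{\bar\pi}_w$: it survives as ${\bar\xi}_{{\bf i}^{(i,k)}}$ exactly when no $i$ lies to the right of the $k$-th $i{+}1$ in ${\bf i}$, and otherwise is killed at the first crossing by the equal-entries branch of \cref{eq: Hecke right action on V^r q=0}. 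You instead prove that the combinatorial operator is right ${\mathbf H}_r(0)$-equivariant, check agreement on the cyclic generator ${\bar\xi}_{{\bf i}_\lambda}$ of $(V_0^{\otimes r})_\lambda\cong{\bar x}_\lambda{\mathbf H}_r(0)$, and conclude by cyclicity. Your case analysis in (ii) — in particular the cancellation of the pair $+{\bar\xi}_{{\bf k}^{\sigma_j}}-{\bar\xi}_{{\bf k}'}$ when $i_j=i_{j+1}=i+1$ (both positions are admissible simultaneously since $i_{j+1}\neq i$), and the uniform sign $-1$ in the decreasing case — is exactly the right bookkeeping, and (i)–(iii) do force $e_{i,\lambda}=E^{\mathrm{comb}}_{i,\lambda}$. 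The paper's route concentrates the word combinatorics in one direct computation; yours modularizes it, with step (ii) amounting to a hands-on, weight-by-weight verification that the $U_0(\mathfrak{gl}_n)$-action commutes with the ${\mathbf H}_r(0)$-action.

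There is, however, one false claim in your step (iii): with $a=\lambda_1+\cdots+\lambda_i$ and $G=\mathfrak{S}_{\{a+1,\ldots,a+\lambda_{i+1}\}}$, the intersection $\mathfrak{S}_{\row(A)}\cap G$ is \emph{not} trivial once $\lambda_{i+1}\ge 3$: it contains $\mathfrak{S}_{\{a+2,\ldots,a+\lambda_{i+1}\}}$, because $\{a+2,\ldots,a+\lambda_{i+1}\}$ is a block of $\row(A)=\lambda^{i-}$. Consequently the factorization $x=uv$ with $u\in\mathfrak{S}_{\lambda^{i-}}$, $v\in G$ is neither unique nor length-additive, so the identity ${\bar x}_{\mathfrak{S}_{\row(A)}\mathfrak{S}_\lambda}={\bar x}_{\row(A)}{\bar x}_G$ cannot be obtained by matching terms ${\bar\pi}_u{\bar\pi}_v={\bar\pi}_{uv}$. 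The identity itself is true, but for a different reason: since ${\bar x}_\mu=\pi_{w_0(\set(\mu))}$ and $\pi_u\pi_v=\pi_{u\star v}$ (Demazure product), the overcounting collapses idempotently — e.g.\ writing ${\bar x}_\lambda={\bar x}_H{\bar x}_G$ with $H$ the remaining blocks, the inclusion $H\subseteq\mathfrak{S}_{\lambda^{i-}}$ gives the absorption ${\bar x}_{\lambda^{i-}}{\bar x}_H={\bar x}_{\lambda^{i-}}$. The cleanest repair is simply to replace ${\bar x}_G$ by the sum over the length-additive representatives ${}^{\lambda^{i-}}\mathfrak{S}_\lambda=\{x_0,\ldots,x_{\lambda_{i+1}-1}\}$, exactly as the paper does; these are the representatives your argument implicitly needs, and they yield the same multiplicity-free expansion $\sum_{1\le k\le\lambda_{i+1}}{\bar\xi}_{{\bf i}_\lambda^{(i,k)}}$ on the generator. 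With this local fix your proof goes through.
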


\begin{proof}
Let $A = D_{\lambda} - E_{i+1,i+1} + E_{i,i+1}$. 
Set $\lambda^{i-} := (\lambda_1,\ldots,\lambda_i+1,\lambda_{i+1}-1,\ldots,\lambda_n)$. 
Then $\row(A) = \lambda^{i-}$, $\col(A) = \lambda$, and $w_A = \mathrm{id}$. Thus $d_A = \mathrm{id}$. 
Therefore
\[
\bar x_{\mathfrak{S}_{\row(A)} w_A \mathfrak{S}_{\col(A)}} 
= \bar x_{\mathfrak{S}_{\lambda^{i-}} \,\mathrm{id} \, \mathfrak{S}_{\lambda}} 
=  \sum_{w \in \mathfrak{S}_{\lambda^{i-}} \,\mathrm{id} \, \mathfrak{S}_{\lambda}} {\bar \pi}_w,
\]
which factors as
\[
\left( \sum_{w \in \mathfrak{S}_{\lambda^{i-}}} {\bar \pi}_w \right) 
\left( \sum_{w \in \prescript{\lambda^{i-}}{}{\mathfrak{S}}_{\lambda}} {\bar \pi}_w \right) 
= {\bar x}_{\lambda^{i-}} \left( \sum_{w \in \prescript{\lambda^{i-}}{}{\mathfrak{S}}_{\lambda}} {\bar \pi}_w \right),
\]
where
\[
\prescript{\mu}{}{\mathfrak{S}}_{\lambda} 
:= \{\, w \in \mathfrak{S}_{\lambda} \mid \ell(sw) > \ell(w)\ \text{for all simple reflections $s$ in $\mathfrak{S}_\mu$}\,\}.
\]

One verifies that
\[
\prescript{\lambda^{i-}}{}{\mathfrak{S}}_{\lambda} 
= \{\, x_0, x_1, \ldots, x_{\lambda_{i+1}-1} \,\},
\]
where $x_0 = \mathrm{id}$ and
\[
x_j = s_{b_i+1} s_{b_i+2} \cdots s_{b_i+j}
\qquad (1 \le j \le \lambda_{i+1}-1),
\]
with $b_i = \lambda_1 + \cdots + \lambda_i$.
Hence $e_A$ acts by 
\[
  {\bar \xi}_{{\bf i}_\nu} h \longmapsto \delta_{\lambda,\nu}\,{\bar \xi}_{{\bf i}_\lambda^{i-}} \left( \sum_{0 \le j \le \lambda_{i+1}-1} {\bar \pi}_{x_j} \right)\,h.
\]

For a word ${\bf i}$ with $\mathrm{cont}({\bf i}) = \lambda$ and $1 \le k \le \lambda_{i+1}$, define
\[
{\bf i}^{(i,k)} := \text{the word obtained from ${\bf i}$ by replacing the $k$-th occurrence of $i+1$ with $i$}.
\]
Then 
\[
{\bar \xi}_{{\bf i}_\lambda^{\,i-}} {\bar \pi}_{x_j}
= {\bar \xi}_{{\bf i}_{\lambda}^{(i,j+1)}},
\]
so that
\[
{\bar \xi}_{{\bf i}_\lambda^{\,i-}}
\left( \sum_{0 \le j \le \lambda_{i+1}-1} {\bar \pi}_{x_j} \right)
= \sum_{1 \le k \le \lambda_{i+1}} {\bar \xi}_{{\bf i}_{\lambda}^{(i,k)}}.
\]

If a word ${\bf i}$ satisfies $\mathrm{cont}({\bf i})=\lambda$, then there exists a unique permutation 
$w \in \mathfrak{S}_r$ such that
\[
{\bar \xi}_{\bf i} = {\bar \xi}_{{\bf i}_\lambda}\,{\bar \pi}_w.
\]
Here $w$ is the unique minimal-length permutation sending the ordered word 
\[
{\bf i}_\lambda = [\underbrace{1,\ldots,1}_{\lambda_1},\underbrace{2,\ldots,2}_{\lambda_2},\ldots,\underbrace{n,\ldots,n}_{\lambda_n}]
\]
to ${\bf i}$, and it can be obtained by successively applying simple transpositions at positions 
where the entries are strictly increasing (cf.~\cref{eq: Hecke right action on V^r q=0}).

Let $p:=b_i+k$ be the position of the $k$-th occurrence of $i{+}1$ in ${\bf i}_\lambda$.
In any reduced expression $w=s_{j_1}\cdots s_{j_t}$ adapted to 
${\bf i}_\lambda\!\to{\bf i}$ (i.e. each $s_{j_\alpha}$ swaps an increasing adjacent pair),
the letter $i{+}1$ starting at position $p$ moves left across an $i$
exactly as many times as there are $i$’s to the right of the $k$-th $i{+}1$ in ${\bf i}$.
Consequently:
\begin{itemize}
\item If there is \emph{no} $i$ to the right of the $k$-th $i{+}1$ in ${\bf i}$, then this letter never
crosses an $i$ along $w$, and hence
\[
{\bar \xi}_{{\bf i}_{\lambda}^{(i,k)}}\,{\bar \pi}_w \;=\; {\bar \xi}_{{\bf i}^{(i,k)}}.
\]
\item If there \emph{is} at least one $i$ to its right, then along $w$ there is a first swap that moves
this $i{+}1$ past an $i$. Starting instead from ${\bar \xi}_{{\bf i}_{\lambda}^{(i,k)}}$, the two
entries at that position are $(i,i)$, so that factor of ${\bar \pi}$ kills the vector by the
equal–entries branch of \cref{eq: Hecke right action on V^r q=0}; hence
\[
{\bar \xi}_{{\bf i}_{\lambda}^{(i,k)}}\,{\bar \pi}_w \;=\; 0.
\]
\end{itemize}
This proves the claim for $e_{i,\lambda}$, and the statement for $f_{i,\lambda}$ follows analogously.
\end{proof}

\begin{proof}[Proof of \cref{lem: phi}]
From the coproduct formulas of $U_0(\mathfrak{gl}_n)$ one obtains, for $r \geq 1$ and $1 \leq i \leq n-1$,
\begin{align*}
\Delta^{(r)}(\mathbf{k}_i)
&= \mathbf{k}_i^{\otimes r},\\[4pt]
\Delta^{(r)}(\mathbf{e}_i)
&= \sum_{t=1}^{r}\;
\underbrace{1\otimes\cdots\otimes 1}_{t-1}\ \otimes\ \mathbf{e}_i\ \otimes\ 
\underbrace{\mathbf{k}_i\otimes\cdots\otimes \mathbf{k}_i}_{\,r-t\,},\\[4pt]
\Delta^{(r)}(\mathbf{f}_i)
&= \sum_{t=1}^{r}\;
\underbrace{\mathbf{k}_{i+1}\otimes\cdots\otimes \mathbf{k}_{i+1}}_{t-1}\ \otimes\ \mathbf{f}_i\ \otimes\ 
\underbrace{1\otimes\cdots\otimes 1}_{\,r-t\,}.
\end{align*}

It follows that the action on a simple tensor ${\bar \xi}_{\bf i}$ is given by 
\begin{align*}
    \mathbf{k}_i \cdot {\bar \xi}_{\bf i} &= 
    \begin{cases}
        {\bar \xi}_{\bf i}, & \text{if ${\bf i}$ contains no entry equal to $i$}, \\ 
        0, & \text{otherwise},
    \end{cases}\\[4pt]
    \mathbf{e}_i \cdot {\bar \xi}_{\bf i} & = \sum_{{\bf j} \in E_i({\bf i})} {\bar \xi}_{\bf j},\\[4pt]
    \mathbf{f}_i \cdot {\bar \xi}_{\bf i} & = \sum_{{\bf j} \in F_i({\bf i})} {\bar \xi}_{\bf j}.
\end{align*}

Comparing these identities with  
\cref{lem: k_lam act on V^r} and \cref{lem: ei lam fi lam act on V^r}, the desired identities follow. 
\end{proof}

We end this subsection with an easy but important corollary of \cref{lem: k_lam act on V^r}. 

\begin{corollary} \label{polynomial modules are Schur modules} The following hold:

\begin{enumerate}[label = {\rm (\arabic*)}]
   \item 
Let $\mathcal C$ be the full subcategory of $U_0(\mathfrak{gl}_n)\text{-\textsf{mod}}$  consisting of polynomial $U_0(\mathfrak{gl}_n)$-modules of degree $r$.
Then there exists a functor 
$$\xi_{n,r}: \mathcal C \to \mathbf{S}_0(n,r)\text{-\textsf{mod}}$$
such that $\Psi_{n,r}\circ \xi_{n,r}= {\rm id}$.

\item 
The functor $\xi_{n,r}$ preserves the socle; more precisely, for every polynomial $U_0(\mathfrak{gl}_n)$-module $M$ of degree $r$, 
\[\xi_{n,r}(\operatorname{soc}(M))
= \operatorname{soc}(\xi_{n,r}(M)).
\]

\item 
On the image of $\xi_{n,r}$, the functor $\Psi_{n,r}$ preserves the socle;; more precisely, for every polynomial $U_0(\mathfrak{gl}_n)$-module $M$ of degree $r$, 
\[
 \Psi_{n,r}(\operatorname{soc}(\xi_{n,r}(M)))= \operatorname{soc}(M).
\]
In particular, for each $\alpha \in \Lambda^+(r)$,
we have 
\[\operatorname{soc}(N_{\alpha})
=\Psi_{n,r}(\operatorname{soc}(G_{n,r}(R_\alpha))).
\]
If $\ell(\alpha)\le n$, then 
$\operatorname{soc}(N_{\alpha})
= D_{\alpha^{\rmr}}$.

\end{enumerate}
\end{corollary}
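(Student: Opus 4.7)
The plan is to combine the Krob--Thibon classification from \cref{Krob thibon classification} with the already-constructed functor $G_{n,r}$.

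For part (1), by Krull--Schmidt every polynomial module $M\in\mathcal C$ is a direct sum of indecomposable polynomial modules of the form $N_\alpha$, $\alpha\in\Lambda^+(r)$, and \cref{eq: phi G = F} gives $\Psi_{n,r}(G_{n,r}(R_\alpha))=F_{n,r}(R_\alpha)=N_\alpha$. So one sets $\xi_{n,r}(N_\alpha):=G_{n,r}(R_\alpha)$ on indecomposables and extends additively; the identity $\Psi_{n,r}\circ\xi_{n,r}=\mathrm{id}$ then holds by construction on objects. For functoriality on morphisms, one must lift each $U_0(\mathfrak{gl}_n)$-homomorphism $f\colon M\to M'$ to an $\mathbf{S}_0(n,r)$-homomorphism $\xi_{n,r}(f)$; this is handled via the Morita equivalence $\mathbf{S}_0(n,r)\sim e\mathbf{H}_r(0)e$ from \cref{how to prove projectivity}, which identifies the relevant Hom-spaces between $G_{n,r}(R_\alpha)$ and $G_{n,r}(R_\beta)$ with Hom-spaces on the $e\mathbf{H}_r(0)e$-side, and matches them with the corresponding Hom-spaces on the $U_0(\mathfrak{gl}_n)$-side via $\Psi_{n,r}$.

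For part (2), since by \cref{cor: 0-Schur functor 0-Hecke} the functor $\xi_{n,r}$ sends the simple polynomial modules $D_\alpha$ (for $\alpha\in\Lambda^+(r)$ with $\ell(\alpha)\le n$) to the simple $\mathbf{S}_0(n,r)$-modules $S_{\alpha^\bullet}$, and since $\xi_{n,r}$ preserves direct sums, a $U_0(\mathfrak{gl}_n)$-submodule of $M$ is simple if and only if the corresponding $\mathbf{S}_0(n,r)$-submodule of $\xi_{n,r}(M)$ is simple. Summing over all simple submodules then yields $\xi_{n,r}(\operatorname{soc}(M))=\operatorname{soc}(\xi_{n,r}(M))$.

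For part (3), applying $\Psi_{n,r}$ to the identity in (2) and using $\Psi_{n,r}\circ\xi_{n,r}=\mathrm{id}$ immediately gives $\operatorname{soc}(M)=\Psi_{n,r}(\operatorname{soc}(\xi_{n,r}(M)))$ for any $M\in\mathcal C$; with $M=N_\alpha$ and $\xi_{n,r}(N_\alpha)=G_{n,r}(R_\alpha)$, this yields the first formula $\operatorname{soc}(N_\alpha)=\Psi_{n,r}(\operatorname{soc}(G_{n,r}(R_\alpha)))$. When $\ell(\alpha)\le n$, one then chains \cref{cor: 0-Schur functor 0-Hecke} (which gives $G_{n,r}(R_\alpha)\cong P_{\alpha^\bullet}$), \cref{socle of Pim of Schur} (which gives $\operatorname{soc}(P_{\alpha^\bullet})\cong S_{(\alpha^\bullet)^r}=S_{(\alpha^r)^\bullet}$), and \cref{thm: 0-Schur functor degenerate quantum group} (which gives $\Psi_{n,r}(S_{(\alpha^r)^\bullet})\cong D_{((\alpha^r)^\bullet)^+}=D_{\alpha^r}$) to conclude that $\operatorname{soc}(N_\alpha)\cong D_{\alpha^r}$.

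The main obstacle lies in part (1), namely in establishing functoriality of $\xi_{n,r}$ on morphisms: one must show that $U_0(\mathfrak{gl}_n)$-homomorphisms between polynomial modules canonically correspond to $\mathbf{S}_0(n,r)$-homomorphisms between their lifts under $G_{n,r}$. This correspondence is not automatic, because the image of $\psi_{n,r}$ in $\mathbf{S}_0(n,r)$ is in general proper; it is handled by the Morita-equivalence computation of Hom-spaces through $e\mathbf{H}_r(0)e$. Once this is in place, parts (2) and (3) follow essentially formally from the correspondence of simple modules (\cref{cor: 0-Schur functor 0-Hecke}) and the description of the socle of a projective indecomposable $\mathbf{S}_0(n,r)$-module (\cref{socle of Pim of Schur}).
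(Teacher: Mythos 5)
There is a genuine gap, and it sits exactly where you flag it: part (1). Defining $\xi_{n,r}$ on objects by choosing Krull--Schmidt decompositions $M\cong\bigoplus_i N_{\alpha_i}$ and setting $\xi_{n,r}(N_\alpha):=G_{n,r}(R_\alpha)$ requires, for functoriality, a multiplicative section of the map $\Hom_{\mathbf{S}_0(n,r)}\bigl(G_{n,r}(R_\alpha),G_{n,r}(R_\beta)\bigr)\to\Hom_{U_0(\mathfrak{gl}_n)}(N_\alpha,N_\beta)$ induced by $\Psi_{n,r}$, i.e.\ fullness of $\Psi_{n,r}$ on the image of $G_{n,r}$. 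The Morita equivalence from \cref{how to prove projectivity} computes only the $\mathbf{S}_0(n,r)$-side, identifying it with $\Hom_{e\mathbf{H}_r(0)e}\bigl(e\mathbf{H}_r(0)e_\alpha,\,e\mathbf{H}_r(0)e_\beta\bigr)$; neither your argument nor anything in the paper computes $\Hom_{U_0(\mathfrak{gl}_n)}(N_\alpha,N_\beta)$ or shows the two agree. Since $\psi_{n,r}$ is not surjective (the double commutant property fails, \cite[Note~5.7]{KT5}), the pullback $\Psi_{n,r}$ is faithful but a $U_0(\mathfrak{gl}_n)$-homomorphism has no a priori reason to commute with all of $\mathbf{S}_0(n,r)$, so the claimed Hom-matching is precisely the unproven hard point, not a consequence of Morita theory. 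Moreover, even granting it, a construction depending on chosen decompositions yields at best $\Psi_{n,r}\circ\xi_{n,r}\cong\mathrm{id}$, a natural isomorphism, whereas the statement asserts equality on the nose --- and parts (2) and (3) are equalities of subspaces inside one underlying space, which do not even type-check for a decomposition-based $\xi_{n,r}$. Relatedly, in your part (2) the phrase ``the corresponding $\mathbf{S}_0(n,r)$-submodule of $\xi_{n,r}(M)$'' is undefined: socles are not direct summands, so additivity of $\xi_{n,r}$ gives no submodule correspondence without exactness, which you have not established.

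The paper takes a completely different, canonical route that dissolves all of these issues. A polynomial module $M$ is by definition a subspace of $V_0^{\otimes r}$, on which $\mathbf{S}_0(n,r)$ already acts: by \cref{lem: k_lam act on V^r} the idempotents $k_\lambda$ act as content projections, so $M=\bigoplus_{\lambda}k_\lambda(M)$ and each $k_\lambda$ stabilizes $M$, while by \cref{lem: phi} one has $\psi_{n,r}(\mathbf{e}_i)=e_i$ and $\psi_{n,r}(\mathbf{f}_i)=f_i$, so $e_i,f_i$ act on $M$ exactly as $\mathbf{e}_i,\mathbf{f}_i$ do. Then $\xi_{n,r}(M)$ is the \emph{same} underlying vector space with this $\mathbf{S}_0(n,r)$-structure, $\Psi_{n,r}\circ\xi_{n,r}=\mathrm{id}$ holds by construction, and no Krull--Schmidt decomposition or Hom-space lifting is ever invoked. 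With this on-the-nose construction, every $\mathbf{S}_0(n,r)$-submodule of $\xi_{n,r}(M)$ is literally a $U_0(\mathfrak{gl}_n)$-submodule of $M$ (pullback along $\psi_{n,r}$), which is what drives the two inclusions proving (2), whereas the paper deduces (3) formally from (1), (2) and \cref{socle of Pim of Schur}. For what it is worth, your concluding chain in (3) --- $G_{n,r}(R_\alpha)\cong P_{\alpha^\bullet}$ by \cref{cor: 0-Schur functor 0-Hecke}, $\operatorname{soc}(P_{\alpha^\bullet})\cong S_{(\alpha^{\rmr})^\bullet}$ by \cref{socle of Pim of Schur}, and $\Psi_{n,r}(S_{(\alpha^{\rmr})^\bullet})\cong D_{\alpha^{\rmr}}$ by \cref{thm: 0-Schur functor degenerate quantum group} --- is correct and agrees with the paper, but it only stands once (1) and (2) are repaired along the lines above.
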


\begin{proof}
(1) 
For each $\lambda \in \Lambda(n,r)$, the idempotent $k_\lambda \in \mathbf{S}_0(n,r)$ acts on $V_0^{\otimes r}$ as in \cref{lem: k_lam act on V^r}. 
Since $\{k_\lambda \mid \lambda \in \Lambda(n,r)\}$ is a set of orthogonal idempotents of $\mathbf{S}_0(n,r)$ with $\sum_{\lambda \in \Lambda(n,r)}k_\lambda=1$,
we obtain
\[
M = \bigoplus_{\lambda \in \Lambda(n,r)} M_\lambda, \qquad M_\lambda := k_\lambda(M),
\]
so each $k_\lambda$ also acts on $M$.
By \cref{lem: phi},
\[
\psi_{n,r}(\mathbf{e}_i) = e_i,\qquad 
\psi_{n,r}(\mathbf{f}_i) = f_i,\qquad 
\psi_{n,r}(\mathbf{k}_i)=\sum_{\mu_i=0}k_\mu.
\]
Hence, on the same underlying vector space $M$, we define the actions of $e_i,f_i$ in the same way as $\mathbf{e}_i,\mathbf{f}_i$. Together with the $k_\lambda$-action described above, these operators endow $M$ with a natural $\mathbf{S}_0(n,r)$-module structure. Define $\xi_{n,r}(M)$ to be the resulting $\mathbf{S}_0(n,r)$-module. 
By construction,
we have $\Psi_{n,r}(\xi_{n,r}(M)) = M$.

(2) 
In general, $\operatorname{soc}(M)$ is contained in $\Psi_{n,r}(\operatorname{soc}(\xi_{n,r}(M)))$. If the inclusion is strict, then by (1) the image $\xi_{n,r}(\operatorname{soc}(M))$ contains $\operatorname{soc}(\xi_{n,r}(M))$, which yields a contradiction.

(3) The first assertion follows from (2).
The second assertion can be obtained by combining this result with \cref{socle of Pim of Schur}.  
\end{proof}

\section{Symmetries of the Cartan matrix of ${\mathbf S}_0(n,r)$}
\label{main result: 3}
The Cartan matrix of ${\mathbf S}_0(n,r)$ is defined by the matrix 
$$
C = (c_{\lambda, \mu})_{\lambda, \mu \in \Lambda^{\bullet}(n,r)},
$$
where $c_{\lambda, \mu} = \dim_\mathbb{C} \Hom_{{S_0(n,r)}_\mathbb{C}}(P_{\lambda}, P_{\mu})$.

\begin{lemma} {\rm \cite[1.7.6 and 1.7.7]{91Benson}}
\label{multiplicity of composition factors}
Let $A$ be a finite dimensional algebra over $\mathbb C$ and $M$ be a finite dimensional $A$-module.
\begin{enumerate}[label = {\rm (\arabic*)}]
\item 
Let $P$ be a projective indecomposable $A$-module. Then
$
\dim_{\Delta} \operatorname{Hom}_{A}(P, M)
$
equals the multiplicity $[M : \operatorname{top}(P)]$ of $\operatorname{top}(P)$ as a composition factor of $M$, where 
$\Delta=\operatorname{End}_A(\operatorname{top}(P))$.

\item 
Let $I_S$ denote the injective hull of a simple $A$-module $S$ and  
$\Delta=\operatorname{End}_A(S)$. Then
$
\dim_{\Delta} \operatorname{Hom}_{A}(M, I_S)
$
equals the multiplicity $[M : S]$ of $S$ as a composition factor of $M$.
\end{enumerate}
\end{lemma}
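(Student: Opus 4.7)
The plan is to reduce both parts to the case where $M$ is simple by passing to a composition series, and then to compute the resulting Hom spaces using Schur's lemma. Since the base field is $\mathbb{C}$, Schur's lemma forces $\Delta=\operatorname{End}_A(T)=\mathbb{C}$ for every simple $A$-module $T$, so $\dim_\Delta$ coincides with $\dim_\mathbb{C}$ throughout and no subtlety about the $\Delta$-module structure arises. Fix once and for all a composition series $0=M_0\subset M_1\subset\cdots\subset M_\ell=M$ with $S_i:=M_i/M_{i-1}$ simple, so that $[M:T]=\#\{i\mid S_i\cong T\}$ for any simple $T$.

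For part (1), since $P$ is projective the functor $\operatorname{Hom}_A(P,-)$ is exact. Applying it to each short exact sequence $0\to M_{i-1}\to M_i\to S_i\to 0$ and summing $\mathbb{C}$-dimensions along the filtration gives $\dim_\mathbb{C}\operatorname{Hom}_A(P,M)=\sum_{i=1}^\ell \dim_\mathbb{C}\operatorname{Hom}_A(P,S_i)$. For any simple module $S$, a nonzero $f\in\operatorname{Hom}_A(P,S)$ must kill $\operatorname{rad}(P)$ (otherwise $f(\operatorname{rad}(P))$ would be a nonzero proper submodule of $S$), so $f$ factors through $\operatorname{top}(P)$; hence $\operatorname{Hom}_A(P,S)\cong\operatorname{Hom}_A(\operatorname{top}(P),S)$, which by Schur is one-dimensional if $S\cong\operatorname{top}(P)$ and zero otherwise. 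Summing over $i$ recovers $[M:\operatorname{top}(P)]$.

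Part (2) is proved by the dual argument: the contravariant functor $\operatorname{Hom}_A(-,I_S)$ is exact by injectivity of $I_S$, so the same composition-series reduction applies and gives $\dim_\mathbb{C}\operatorname{Hom}_A(M,I_S)=\sum_{i=1}^\ell \dim_\mathbb{C}\operatorname{Hom}_A(S_i,I_S)$. For a simple $M'$, any nonzero $g:M'\to I_S$ is injective (its kernel is a proper submodule of the simple $M'$), and its image, being a simple submodule of $I_S$, lies in $\operatorname{soc}(I_S)=S$; thus $\operatorname{Hom}_A(M',I_S)\cong\operatorname{Hom}_A(M',S)$, which is one-dimensional or zero by Schur. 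No genuine obstacle is anticipated, as this is a standard homological book-keeping result (indeed, cited to \cite{91Benson}); the argument rests entirely on exactness of $\operatorname{Hom}_A(P,-)$ and $\operatorname{Hom}_A(-,I_S)$, on the factorization properties through $\operatorname{top}(P)$ and $\operatorname{soc}(I_S)$, and on Schur's lemma over an algebraically closed field.
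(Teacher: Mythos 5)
This lemma is quoted directly from Benson \cite{91Benson} and the paper supplies no proof of its own, so the only meaningful comparison is with the standard textbook argument, which your proposal reproduces exactly: composition-series induction using exactness of $\operatorname{Hom}_A(P,-)$ and $\operatorname{Hom}_A(-,I_S)$, the identifications $\operatorname{Hom}_A(P,S')\cong\operatorname{Hom}_A(\operatorname{top}(P),S')$ and $\operatorname{soc}(I_S)=S$, and Schur's lemma (which over $\mathbb{C}$ indeed collapses $\Delta$ to $\mathbb{C}$, so $\dim_\Delta=\dim_{\mathbb{C}}$). Your proof is correct, with one micro-repair in part (1): the parenthetical reason that a nonzero $f\colon P\to S$ kills $\operatorname{rad}(P)$ is misstated --- if $f(\operatorname{rad}(P))\neq 0$ it would \emph{equal} $S$ by simplicity rather than be a proper submodule, and properness requires the smallness of $\operatorname{rad}(P)$; the correct one-liner is $f(\operatorname{rad}(P))=f(\operatorname{rad}(A)P)=\operatorname{rad}(A)f(P)\subseteq\operatorname{rad}(A)S=0$, or equivalently that $\operatorname{rad}(P)$, being the intersection of the maximal submodules of $P$, is contained in the maximal submodule $\ker f$.
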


Since $\mathbb C$ is algebraically closed, 
$\End_{{S_0(n,r)}_\mathbb C}(S_{\lambda}) \cong \mathbb C$ as $\mathbb C$-spaces for all $\lambda \in \Lambda^{\bullet}(n,r)$. Thus, by \cref{multiplicity of composition factors} (1), $c_{\lambda, \mu}$ equals $[P_{\mu}: S_{\lambda}]$.
For each $\mu \in \Lambda(n,r)$, define
\begin{equation*}
B_{\mu} := \left\{ e_A \mid \ro(A) = \mu \right\} \setminus \bigcup_{\alpha \in \mathrm{max}(\mu) \setminus \{(1,1,\ldots,1)\}} B_{\mu, \alpha},
\end{equation*}
where
\[
B_{\mu, \alpha} := \left\{ e_A \mid \text{$\ro(A) = \mu$ and $A$ is open on rows with respect to $\alpha$} \right\}
\]
(see \cite[Section 5.2]{JSY16} for the precise definition).
With this notation, it was shown in \cite[Proposition 5.3]{JSY16} that 
    \[
    c_{\lambda, \mu}= |B^{\lambda} \cap B_{\mu}|.
    \]

The purpose of this section is to introduce certain symmetries of the Cartan matrix $C$.
To this end, it is necessary to recall the symmetries of the Cartan matrix of the $0$-Hecke algebra $\mathbf {H}_0(r)$.

Let 
$$D=(d_{\alpha, \beta})_{\alpha, \beta \in \Lambda^+(r)}$$
denote the Cartan matrix of $\mathbf {H}_0(r)$, that is,  
$d_{\alpha, \beta}=\dim_\mathbb C {\rm Hom}_{\mathbf {H}_0(r)}(R_\alpha, R_\beta)$.

Let $f: \mathbf {H}_0(r) \to \mathbf {H}_0(r)$ be an automorphism. For any $\mathbf {H}_0(r)$-module $M$, let $f[M]$ be the $\mathbf {H}_0(r)$-module with the same underlying vector space as $M$, where the action is twisted by $f$ via
$$
b \cdot_f v := f(b) \cdot v \quad \text{for all } b \in \mathbf {H}_0(r),\ v \in M.
$$
Similarly, given an anti-isomorphism $g: \mathbf {H}_0(r) \to \mathbf {H}_0(r)$, let $g[M]$ be the $\mathbf {H}_0(r)$-module with underlying space $M^*$, the dual of $M$, and action $\cdot^g$ defined by
\begin{align}\label{eq: anti-automorphism twist}
(b \cdot^g \delta)(v) := \delta(g(b) \cdot v) \quad \text{for all } b \in \mathbf {H}_0(r), \delta \in M^*, v \in M.
\end{align}
Let
\begin{align}\label{eq: isomorphism twist}
\mathbf{T}^+_f \colon \mathbf {H}_0(r)\text{-mod} \to \mathbf {H}_0(r)\text{-mod}, \quad M \mapsto f[M]
\end{align}
be the covariant functor induced by the automorphism $f$, and
\begin{align}\label{eq: anti-isomorphism twist}
\mathbf{T}^-_g \colon \mathbf {H}_0(r)\text{-mod} \to \mathbf {H}_0(r)\text{-mod}, \quad M \mapsto g[M]
\end{align}
be the contravariant functor induced by the anti-isomorphism $g$.
The functors $\mathbf{T}^+_f$ and $\mathbf{T}^-_g$ are called the \emph{$f$-twist} and \emph{$g$-twist}, respectively. 

For the (anti-)involutions $\autophi, \autotheta$, and $\autochi$ in \cref{automorphism of Fayers}, the corresponding twists on simple modules and projective indecomposable modules are listed in \cref{tab: auto twist of F and P}.\footnote{For additional (anti-)involutions, see \cite[Table 2, Section 3.4]{22JKLO}.} 
These (anti-)involution twists yield the following symmetries of the Cartan matrix of $\mathbf{H}_0(r)$.

\begin{table}[t]
\fontsize{9}{9}
\renewcommand*\arraystretch{1.2}
\begin{tabular}{ c || c | c | c } 
& $\autophi$-twist
& $\autotheta$-twist
& $\autochi$-twist
\\ \hline \hline
$F_\alpha$
& $F_{\alpha^\rmr}$
& $F_{\alpha^\rmc}$
& $F_{\alpha}     $
\\ \hline
$R_\alpha$
& $R_{\alpha^\rmr}$
& $R_{\alpha^\rmc}$
& $R_{\alpha^\rmr}$
\end{tabular}
\caption{(Anti-)involution twists of $F_\alpha$ and $R_\alpha$}
\label{tab: auto twist of F and P}
\end{table}

\begin{lemma} \label{symmetries of H cartan}
For $\alpha, \beta \in \Lambda^+(r)$, the following hold.
\begin{enumerate}[label = {\rm (\arabic*)}]
\item 
$d_{\alpha,\beta}=d_{\alpha^{\rmr}, \beta^{\rmr}}$,
$d_{\alpha,\beta}=d_{\beta^{\rmr}, \alpha^{\rmr}}$,
and  
$d_{\alpha,\beta}=d_{\beta, \alpha}$
\item 
$d_{\alpha,\beta}=d_{\alpha^{\rmc}, \beta^{\rmc}}$
\item 
$d_{\alpha,\beta}=d_{\beta^{\rmr}, \alpha}$
\end{enumerate}
\end{lemma}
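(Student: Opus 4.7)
The plan is to derive each of the three symmetries from an (anti)involution twist of $\mathbf{H}_r(0)$ by one of $\autophi,\autotheta,\autochi$, combined for part (3) with Fayers' identification of the Nakayama automorphism of the Frobenius algebra $\mathbf{H}_r(0)$. For the first equality of (1), the covariant twist $\mathbf{T}^+_\autophi$ of \cref{eq: isomorphism twist} is an auto-equivalence of $\mathbf{H}_r(0)\text{-\textsf{mod}}$ (since $\autophi$ is an algebra automorphism), hence preserves dimensions of Hom-spaces; by \cref{tab: auto twist of F and P} it sends $R_\gamma$ to $R_{\gamma^\rmr}$, yielding $d_{\alpha,\beta}=d_{\alpha^\rmr,\beta^\rmr}$. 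For the second equality, the contravariant twist $\mathbf{T}^-_\autochi$ of \cref{eq: anti-isomorphism twist} is a duality that also sends $R_\gamma$ to $R_{\gamma^\rmr}$, so that
\[
d_{\alpha,\beta}=\dim\Hom_{\mathbf{H}_r(0)}(\mathbf{T}^-_\autochi[R_\beta],\mathbf{T}^-_\autochi[R_\alpha])=d_{\beta^\rmr,\alpha^\rmr};
\]
the third equality of (1) then follows by applying the first to the pair $(\beta^\rmr,\alpha^\rmr)$.

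Identity (2) is proved in exactly the same manner, using the covariant auto-equivalence $\mathbf{T}^+_\autotheta$, which by \cref{tab: auto twist of F and P} sends $R_\gamma$ to $R_{\gamma^\rmc}$ and hence immediately gives $d_{\alpha,\beta}=d_{\alpha^\rmc,\beta^\rmc}$.

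The main obstacle is (3), which cannot be extracted from the three twists alone: the orbit of $(\alpha,\beta)$ under the group they generate on index pairs only produces the pairs $(\alpha^\rmr,\beta^\rmr)$, $(\beta^\rmr,\alpha^\rmr)$, $(\beta,\alpha)$ together with their complement-twists, and never the mixed pair $(\beta^\rmr,\alpha)$. To close this gap I will invoke the fact, established by Fayers \cite[Proposition 4.2]{05Fayers} and already exploited in the $\Theta$-lemma of the previous subsection, that $\mathbf{H}_r(0)$ is a Frobenius algebra whose Nakayama automorphism is precisely $\autophi$. The standard Nakayama duality for Frobenius algebras then yields, for all finite-dimensional $\mathbf{H}_r(0)$-modules $M$ and $N$, the equality
\[
\dim_{\mathbb{C}}\Hom_{\mathbf{H}_r(0)}(M,N)=\dim_{\mathbb{C}}\Hom_{\mathbf{H}_r(0)}(N,\autophi[M]),
\]
and specializing to $M=R_\alpha$, $N=R_\beta$ gives $d_{\alpha,\beta}=d_{\beta,\alpha^\rmr}$. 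A single further application of the first equality of (1) to the pair $(\beta,\alpha^\rmr)$ rewrites the right-hand side as $d_{\beta^\rmr,(\alpha^\rmr)^\rmr}=d_{\beta^\rmr,\alpha}$, which is (3).
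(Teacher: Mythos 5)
Your parts (1) and (2) coincide with the paper's own argument: the same twists $\mathbf{T}^+_{\autophi}$, $\mathbf{T}^-_{\autochi}$, $\mathbf{T}^+_{\autotheta}$, the same use of Table 1, and the same derivation of $d_{\alpha,\beta}=d_{\beta,\alpha}$ by composing the first two identities. For part (3) you take a genuinely different route. The paper avoids duality functors entirely: it notes that $R_\beta$ is the injective hull of $F_{\beta^{\rmr}}$ and applies the two halves of the multiplicity lemma (Hom from a projective cover and Hom into an injective hull both compute composition multiplicities), getting $d_{\alpha,\beta}=[R_\alpha:F_{\beta^{\rmr}}]=d_{\beta^{\rmr},\alpha}$ in two lines. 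You instead invoke the Frobenius structure of $\mathbf{H}_r(0)$ with Nakayama automorphism $\autophi$ (Fayers, as the paper itself uses in the $\Theta$-lemma) and the Nakayama duality $\Hom(P,N)^*\cong\Hom(N,\nu P)$, then untwist with (1). Both arguments rest on the same underlying fact --- self-injectivity of $\mathbf{H}_r(0)$ with $\operatorname{soc}(R_\beta)\cong F_{\beta^{\rmr}}$ --- but yours packages it functorially while the paper's is more elementary; your preliminary observation that (3) is not reachable from the twist-orbit of $(\alpha,\beta)$ is correct and explains why a new ingredient is needed in both treatments.

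One caveat: your duality formula is overstated. As written, for \emph{all} finite-dimensional $M,N$,
\[
\dim_{\mathbb{C}}\Hom_{\mathbf{H}_r(0)}(M,N)=\dim_{\mathbb{C}}\Hom_{\mathbf{H}_r(0)}(N,\autophi[M])
\]
is false: taking $M=F_\alpha$ and $N=F_{\alpha^{\rmr}}$ with $\alpha\neq\alpha^{\rmr}$ gives $0$ on the left and $1$ on the right. The correct statement requires $M$ projective (it is the standard isomorphism $\Hom_A(P,N)^*\cong\Hom_A(N,\nu P)$ for $P$ projective, with $\nu$ the Nakayama functor, which for a Frobenius algebra is the twist by the Nakayama automorphism --- here an involution, so no $\nu$ versus $\nu^{-1}$ ambiguity). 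Since you only apply the formula with $M=R_\alpha$, the proof goes through once you restrict the hypothesis accordingly.
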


\begin{proof}
(1) Combining \cref{tab: auto twist of F and P} with the fact that $\mathbf{T}^+_\autophi$ is an equivalence of categories, we obtain $\mathbb C$-vector space isomorphisms   
$${\rm Hom}_{H_r(0)}(R_\alpha, R_\beta) \cong 
{\rm Hom}_{H_r(0)}(\mathbf{T}^+_\autophi(R_\alpha), \mathbf{T}^+_\autophi(R_\beta))\cong {\rm Hom}_{H_r(0)}(R_{\alpha^{\rmr}}, R_{\beta^{\rmr}}).$$
Combining \cref{tab: auto twist of F and P} with the fact that $\mathbf{T}^-_\chi$ is a duality functor, we obtain $\mathbb C$-vector space isomorphisms   
$$ {\rm Hom}_{H_r(0)}(R_\alpha, R_\beta)\cong 
 {\rm Hom}_{H_r(0)}(\mathbf{T}^-_\chi(R_\beta), \mathbf{T}^-_\chi(R_\alpha)) \cong  {\rm Hom}_{H_r(0)}(R_{\beta^{\rmr}}, R_{\alpha^{\rmr}}).$$
Finally, by combining (1) and (2), we see that $d_{\alpha, \beta}=d_{\beta,\alpha}$.
From the representation-theoretical viewpoint,it can be derived from $\mathbb C$-vector space isomorphisms 
$${\rm Hom}_{H_r(0)}(R_\alpha, R_\beta) \cong 
{\rm Hom}_{H_r(0)}(\mathbf{T}^-_{\autochi \circ \autophi}(R_\alpha), \mathbf{T}^-_{\autochi \circ \autophi}(R_\beta))\cong {\rm Hom}_{H_r(0)}(R_{\beta}, R_{\alpha}).$$

(2) Combining \cref{tab: auto twist of F and P} with the fact that $\mathbf{T}^+_\autotheta$ is an equivalence, we obtain $\mathbb C$-vector space isomorphisms   
$$ {\rm Hom}_{H_r(0)}(R_\alpha, R_\beta)\cong 
 {\rm Hom}_{H_r(0)}(\mathbf{T}^+_\autotheta(R_\alpha), \mathbf{T}^+_\autotheta(R_\beta)) \cong  {\rm Hom}_{H_r(0)}(R_{\alpha^{\rmc}}, R_{\beta^{\rmc}}).$$
 
(3) Since $R_{\beta}$ is the injective hull of $F_{\beta^{\rmr}}$, it follows from \cref{multiplicity of composition factors} (2) that  
$$d_{\alpha, \beta}=\dim_\mathbb C {\rm Hom}_{H_r(0)}(R_\alpha, R_\beta)=[R_\alpha:F_{\beta^{\rmr}}]$$
On the other hand, \cref{multiplicity of composition factors} (1) says that 
$$[R_\alpha:F_{\beta^{\rmr}}]=\dim_\mathbb C {\rm Hom}_{H_r(0)}(R_{\beta^{\rmr}}, R_\alpha)=d_{\beta^{\rmr}, \alpha}.$$ 
\end{proof}

Define bijective maps
\begin{align*}
{\mathsf {ex}} &: \Lambda^+(r)^2 \to  \Lambda^+(r)^2, \quad (\alpha, \beta) \mapsto (\beta, \alpha)\\
{\mathsf {c}}&: \Lambda^+(r)^2 \to  \Lambda^+(r)^2, \quad (\alpha, \beta) \mapsto (\alpha^\rmc, \beta^\rmc )\\
\eta &: \Lambda^+(r)^2 \to  \Lambda^+(r)^2, \quad (\alpha, \beta) \mapsto (\beta^\rmr, \alpha).
\end{align*}
Then $\eta^2((\alpha, \beta))=((\alpha^{\rmr}, \beta^{\rmr}))$.
% \yt{Find representation theoretical interpretation of $\eta$ 즉 해당하는 order 4 isomorphim을 찾아라. 아이디어: $\eta^2=\autophi$인 $\eta$를 찾아라.}
The orders of ${\mathsf {ex}}$ and ${\mathsf {c}}$ divide $2$ and that of $\eta$ divides $4$.
Furthermore, they satisfy the relations
$$\eta \circ {\mathsf {ex}}={\mathsf {ex}} \circ \eta^3, \quad {\mathsf {ex}}\circ {\mathsf {c}}={\mathsf {c}} \circ {\mathsf {ex}}, \quad \eta \circ {\mathsf {c}}={\mathsf {c}} \circ \eta. $$
This implies that there is a surjection from $D_4 \times \mathbb Z_2$ to $<{\mathsf {ex}},{\mathsf {c}},\eta>$, where $D_4$ is the dihedral group of order $8$.
Thus $D_4 \times \mathbb Z_2$ acts on  $\Lambda^+(r)^2$.

\begin{example}
(1) Let $r=4$. 
In this case, $<{\mathsf {ex}},{\mathsf {c}},\eta>\cong D_4 \times \mathbb Z_2$, and 
$\Lambda^+(4)^2$ has exactly 64 elements grouped into 15 orbits (see \cref{orbit dist}).

{\small
\begin{table}[h]
\centering
\begin{tabular}{|c|c|c|}
\hline
\text{ Orbit size} & \text{Number of orbits} & \text{Total elements} \\
\hline
1  & 1 & 1 \\
2  & 3 & 6 \\
4  & 7 & 28 \\
8  & 4 & 32 \\
\hline
\text{Total} & \text{12 orbits} & \text{64 elements} \\
\hline
\end{tabular}
\caption{Orbit decomposition of $\Lambda^+(4)^2$ under the action of $\langle \mathsf{ex}, \mathsf{c}, \eta \rangle$}
\label{orbit dist}
\end{table}
}

\noindent
For instance, the unique orbit of size 1 is $\{((2,2), (2,2))\}$, and  
the orbit of 
$((3,1), (1,1,2))$ is  
\begin{align*}
&((3,1), (1,1,2)) \stackrel{\eta}{\to} ((2,1,1), (3,1))
\stackrel{\eta}{\to}((1,3), (2,1,1))\stackrel{\eta}{\to}((1,1,2), (1,3))\stackrel{{\mathsf {ex}}}{\to}\\ 
&
((1,3),(1,1,2))\stackrel{\eta}{\to}
((2,1,1),(1,3))\stackrel{\eta}{\to}
((3,1),(2,1,1))\stackrel{\eta}{\to}((1,1,2),(3,1)). 
\end{align*}

(2) When $r=6$, one can find an orbit consisting of 16 elements; for instance, the orbit of $((1,2,3), (1,3,2))$ has cardinality 16.  
\end{example}

With the above group action, we 
derive the following theorem
from \cref{symmetries of H cartan}.

\begin{proposition}\label{symmetry of Catran matrix of 0 Hecke algebra}
The Cartan matrix of ${\mathbf S}_0(n,r)$ is constant on each $D_4 \times \mathbb{Z}_2$-orbit. More precisely, for every pair $(\alpha, \beta)$ in the same $D_4 \times \mathbb{Z}_2$-orbit, the entry $d_{\alpha, \beta}$ takes the same value.
\end{proposition}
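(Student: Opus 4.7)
The plan is to reduce the proposition entirely to \cref{symmetries of H cartan}, since the three generators $\mathsf{ex}$, $\mathsf{c}$, $\eta$ of the group action on $\Lambda^+(r)^2$ match, one-to-one, the three symmetries of the Cartan matrix established there. Concretely, I would argue: it suffices to show that the map $(\alpha,\beta)\mapsto d_{\alpha,\beta}$ is invariant under each generator of $\langle \mathsf{ex},\mathsf{c},\eta\rangle$, because any two points in the same $D_4\times\mathbb{Z}_2$-orbit differ by a finite product of these generators (here one uses the surjection $D_4\times\mathbb{Z}_2\twoheadrightarrow\langle \mathsf{ex},\mathsf{c},\eta\rangle$ noted just before the proposition).

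The verifications are then immediate. Invariance under $\mathsf{ex}$, i.e.\ $d_{\alpha,\beta}=d_{\beta,\alpha}$, is the third identity of \cref{symmetries of H cartan}(1), which comes from combining the $\autophi$-twist with the duality $\mathbf{T}^-_{\autochi}$. Invariance under $\mathsf{c}$, i.e.\ $d_{\alpha,\beta}=d_{\alpha^\rmc,\beta^\rmc}$, is \cref{symmetries of H cartan}(2), obtained from the $\autotheta$-twist. Invariance under $\eta$, i.e.\ $d_{\alpha,\beta}=d_{\beta^\rmr,\alpha}$, is \cref{symmetries of H cartan}(3), deduced from the injective-hull description $R_\beta=I(F_{\beta^\rmr})$ together with the two interpretations of $[R_\alpha:F_{\beta^\rmr}]$ given by \cref{multiplicity of composition factors}(1) and (2).

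Since these three equalities already appear in \cref{symmetries of H cartan}, the proof is essentially a bookkeeping argument with no substantive obstacle; the only thing to double-check is that the listed relations between $\mathsf{ex}$, $\mathsf{c}$, $\eta$ (namely $\eta\mathsf{ex}=\mathsf{ex}\eta^3$, $\mathsf{ex}\mathsf{c}=\mathsf{c}\mathsf{ex}$, $\eta\mathsf{c}=\mathsf{c}\eta$) are indeed consequences of the established Cartan symmetries, so that the constancy on orbits is consistent. This is automatic because the constancy is asserted for the function $d$, and we have proved that each generator fixes $d$; any composition of them therefore fixes $d$ as well, and the result follows.
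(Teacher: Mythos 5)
Your proof is correct and takes essentially the same route as the paper, which derives the proposition directly from \cref{symmetries of H cartan}: invariance of $d$ under each of the generators $\mathsf{ex}$, $\mathsf{c}$, $\eta$ is exactly the content of parts (1)--(3), and constancy on $D_4\times\mathbb{Z}_2$-orbits then follows because the orbits of $D_4\times\mathbb{Z}_2$ coincide with those of the quotient $\langle\mathsf{ex},\mathsf{c},\eta\rangle$. One small clarification: the relations $\eta\circ\mathsf{ex}=\mathsf{ex}\circ\eta^3$, $\mathsf{ex}\circ\mathsf{c}=\mathsf{c}\circ\mathsf{ex}$, $\eta\circ\mathsf{c}=\mathsf{c}\circ\eta$ are facts about the maps on $\Lambda^+(r)^2$ and need not be ``consequences of the Cartan symmetries''---as you note yourself, once each generator fixes $d$, every word in the generators does too, which is all that is needed.
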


In \cref{relation between weak and strong compositions}, we introduced the bijections
\begin{align*} 
& \Lambda^{\bullet}(n,r) \xrightarrow{\ \sim\ } \bigsqcup_{0 \le \ell \le n} \Lambda^+(\ell, r), \quad \lambda \mapsto \lambda^+,\\
& \bigsqcup_{0 \le \ell \le n} \Lambda^+(\ell, r) \xrightarrow{\ \sim\ } \Lambda^{\bullet}(n,r), \quad \alpha \mapsto \alpha^{\bullet}.
\end{align*}
Via these bijections, we regard 
$\Lambda^{\bullet}(n,r)$ as a subset of $\Lambda^+(\ell, r)$.

\begin{theorem} \label{symmetries of 0 Schur cartan}
For $0\le n\le r$, the Cartan matrix 
$$C = (c_{\lambda, \mu})_{\lambda, \mu \in \Lambda^{\bullet}(n,r)}$$ 
of ${\mathbf S}_0(n,r)$
is obtained from  
the Cartan matrix 
$$D=(d_{\alpha, \beta})_{\alpha, \beta \in \Lambda^+(r)}$$ of ${\mathbf H}_r(0)$
by restricting the index set from $\Lambda^+(r)$
to $\Lambda^{\bullet}(n,r)$.
In particular, the two matrices coincide when $n = r$.

\end{theorem}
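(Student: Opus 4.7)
The strategy is to combine the Morita equivalence established in \cref{how to prove projectivity} with the identification $P_\lambda \cong G_{n,r}(R_{\lambda^+})$ from \cref{cor: 0-Schur functor 0-Hecke}. By \cref{multiplicity of composition factors}(1) and the fact that $\mathbb{C}$ is algebraically closed, we have
\[
c_{\lambda,\mu} = \dim_{\mathbb{C}}\operatorname{Hom}_{\mathbf{S}_0(n,r)}(P_\lambda, P_\mu), \qquad d_{\alpha,\beta} = \dim_{\mathbb{C}}\operatorname{Hom}_{\mathbf{H}_r(0)}(R_\alpha, R_\beta),
\]
so it will suffice to exhibit, for each pair $\lambda, \mu \in \Lambda^\bullet(n,r)$, a $\mathbb{C}$-linear isomorphism $\operatorname{Hom}_{\mathbf{S}_0(n,r)}(P_\lambda, P_\mu) \cong \operatorname{Hom}_{\mathbf{H}_r(0)}(R_{\lambda^+}, R_{\mu^+})$.

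First, I would observe that the natural isomorphism of \cref{how to prove projectivity}(1) gives
\[
P_\lambda \;\cong\; G_{n,r}(R_{\lambda^+}) \;\cong\; A_{n,r}\bigl(e\mathbf{H}_r(0)e_{\lambda^+}\bigr),
\]
where $e = \sum_{\alpha \in \Lambda^+(r),\,\ell(\alpha)\le n} e_\alpha$ is the idempotent from \cref{condition of e}. Since $A_{n,r}$ implements an equivalence between $e\mathbf{H}_r(0)e\textsf{-mod}$ and $\mathbf{S}_0(n,r)\textsf{-mod}$, the projective indecomposable module $P_\lambda$ corresponds to the projective indecomposable $e\mathbf{H}_r(0)e_{\lambda^+}$ (which is projective indecomposable over $e\mathbf{H}_r(0)e$ by \cref{how to prove projectivity}(2)). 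Hence
\[
\operatorname{Hom}_{\mathbf{S}_0(n,r)}(P_\lambda, P_\mu) \;\cong\; \operatorname{Hom}_{e\mathbf{H}_r(0)e}\bigl(e\mathbf{H}_r(0)e_{\lambda^+},\; e\mathbf{H}_r(0)e_{\mu^+}\bigr).
\]

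Next, I would apply the standard identification $\operatorname{Hom}_A(Af, Ag) \cong fAg$ for idempotents $f, g$ in a unital algebra $A$. Since $\ell(\lambda^+), \ell(\mu^+) \le n$, both $e_{\lambda^+}$ and $e_{\mu^+}$ are summands of $e$, so $e_{\lambda^+} = e_{\lambda^+}e = ee_{\lambda^+}$ and likewise for $e_{\mu^+}$. This yields
\[
\operatorname{Hom}_{e\mathbf{H}_r(0)e}\bigl(e\mathbf{H}_r(0)e_{\lambda^+},\; e\mathbf{H}_r(0)e_{\mu^+}\bigr) \;\cong\; e_{\lambda^+}e\mathbf{H}_r(0)ee_{\mu^+} \;=\; e_{\lambda^+}\mathbf{H}_r(0)e_{\mu^+},
\]
which equals $\operatorname{Hom}_{\mathbf{H}_r(0)}(R_{\lambda^+}, R_{\mu^+})$. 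Composing these isomorphisms gives $c_{\lambda, \mu} = d_{\lambda^+, \mu^+}$, proving the first assertion. For $n = r$, every $\alpha \in \Lambda^+(r)$ satisfies $\ell(\alpha) \le r = n$, so the bijection $\Lambda^\bullet(r,r) \xrightarrow{\sim} \Lambda^+(r)$ is onto and $C = D$.

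I do not expect any serious obstacle: the argument is essentially formal, since the heavy lifting has been carried out in earlier sections. The only point that benefits from care is correct bookkeeping through the Morita equivalence---namely, tracking that $P_\lambda$ corresponds to $e\mathbf{H}_r(0)e_{\lambda^+}$ under $A_{n,r}$ and that $e_{\lambda^+}$ is absorbed by $e$ on both sides, so that the intermediate Hom space collapses to $e_{\lambda^+}\mathbf{H}_r(0)e_{\mu^+}$.
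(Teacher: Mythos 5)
Your proof is correct, but it takes a genuinely different route from the paper's. The paper works with composition multiplicities: it takes a composition series of $R_{\mu^+}$, applies the exact functor $G_{n,r}$ (exact because ${V_0}^{\otimes r}$ is projective as a right $\mathbf{H}_r(0)$-module), and uses $G_{n,r}(R_{\mu^+})\cong P_{\mu}$ together with $G_{n,r}(F_{\tau})\cong S_{\tau^{\bullet}}$ for $\ell(\tau)\le n$ and $G_{n,r}(F_{\tau})=0$ for $\ell(\tau)>n$ to obtain a composition series of $P_{\mu}$, reading off $c_{\lambda,\mu}=[P_{\mu}:S_{\lambda}]=[R_{\mu^+}:F_{\lambda^+}]=d_{\lambda^+,\mu^+}$. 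You instead stay entirely on the Hom side: the Morita equivalence $A_{n,r}$ between $e\mathbf{H}_r(0)e$-\textsf{mod} and $\mathbf{S}_0(n,r)$-\textsf{mod} from \cref{how to prove projectivity}, combined with the standard identification $\Hom_A(Af,Ag)\cong fAg$, collapses both $\Hom_{\mathbf{S}_0(n,r)}(P_\lambda,P_\mu)$ and $\Hom_{\mathbf{H}_r(0)}(R_{\lambda^+},R_{\mu^+})$ onto the same space $e_{\lambda^+}\mathbf{H}_r(0)e_{\mu^+}$ (using, correctly, that $e_{\lambda^+}$ and $e_{\mu^+}$ are absorbed by $e$ when $\ell(\lambda^+),\ell(\mu^+)\le n$). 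Your route avoids exactness of $G_{n,r}$ and the simple-module half of \cref{cor: 0-Schur functor 0-Hecke}, at the cost of invoking the full progenerator/equivalence analysis; the paper's route is lighter on Morita theory and yields as a byproduct the matching of composition multiplicities $[P_\mu:S_\lambda]=[R_{\mu^+}:F_{\lambda^+}]$ directly, though the two formulations agree by \cref{multiplicity of composition factors}. Two small bookkeeping remarks: \cref{how to prove projectivity}(1) as printed says $B_{n,r}\circ A_{n,r}\cong G_{n,r}$, which does not type-check; the composite actually constructed via $\eta_M$ is $A_{n,r}\circ B_{n,r}$, and your reading $P_\lambda\cong A_{n,r}\bigl(e\mathbf{H}_r(0)e_{\lambda^+}\bigr)$ is the intended one. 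Also, your opening appeal to \cref{multiplicity of composition factors}(1) is unnecessary, since $c_{\lambda,\mu}$ and $d_{\alpha,\beta}$ are \emph{defined} in the paper as the Hom dimensions you use; this is harmless.
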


\begin{proof}
Let 
\[
0 = M_0  \subset M_1 \subset M_2 \subset \cdots  \subset M_s = R_{\mu^+}
\]
be a composition series of $R_{\mu^+}$.
Since the functor $G_{n,r}: {\mathbf H}_0(r)\text{-mod} \to {\mathbf S}_0(n,r)\text{-mod}$ is exact, it follows from \cref{cor: 0-Schur functor 0-Hecke} that 
\[
0 = G_{n,r}(M_0)  \subset G_{n,r}(M_1) \subset G_{n,r}(M_2) \subset \cdots  \subset G_{n,r}(M_s)= P_{\mu}
\]
is a composition series of $P_{\mu}$.
Furthermore,  
\begin{align*}
 G_{n,r}(M_i)/ G_{n,r}(M_{i-1}) \cong 
 \begin{cases}
S_{\tau^\bullet} & \text{ if } \ell(\tau)\le n\\ 
0 & \text{ if } \ell(\tau)> n.  
 \end{cases}
\end{align*}
This tells us that $c_{\lambda,\mu}=d_{\lambda^+,\mu^+}$
completing the proof.
\end{proof}

Note that $\Lambda^\bullet(n,r)^2$ is not closed under the map ${\mathsf {c}}$, in general. 
So we only consider the group $<{\mathsf {ex}},\eta>$, which is a quotient group of $D_4$. 
Thus $D_4$ acts on $\Lambda^\bullet(n,r)^2$.

\begin{example}
Consider the case where $r=3$. Then $<{\mathsf {ex}},{\mathsf {c}},\eta>$-orbit of $((21), 3)$ in $\Lambda^+(3)^2$ is  
\begin{align*}
&((21), 3) \stackrel{\eta}{\to} (3, (21))\stackrel{\eta}{\to} ((12), 3)\stackrel{\eta}{\to}(3, (12))\\
& \hskip 8mm \downarrow {\mathsf {c}}\\
&((12), 111) \stackrel{\eta}{\to} (111, (12))\stackrel{\eta}{\to} ((21), 111) \stackrel{\eta}{\to}(111, (21)).
\end{align*}
On the other hand, $<{\mathsf {ex}},\eta>$-orbit of $((21), 3)$ in $\Lambda^\bullet(2,3)^2$ is
\begin{align*}
((21), (3)) \stackrel{\eta}{\to} ((3), (21))\stackrel{\eta}{\to} ((12), (3))\stackrel{\eta}{\to}((3), (12)).
\end{align*}
\end{example}

\section{Further avenues}\label{sec: Further remark}

\begin{enumerate}[label = {\rm (\arabic*)}]
\item 
We have an $\mathbf{S}_0(n,r)$-module isomorphism  
\[
\mathbf{S}_0(n,r)\cong \bigoplus_{\lambda\in\Lambda^\bullet(n,r)} P_\lambda^{\dim S_\lambda},
\]
and hence
\[
\operatorname{rad}(\mathbf{S}_0(n,r))\cong 
\bigoplus_{\lambda\in\Lambda^\bullet(n,r)} N_\lambda^{\dim S_\lambda}
\]
(see \cref{Describing radical }). However, despite this structural decomposition, an explicit description of 
\(\operatorname{rad}(\mathbf{S}_0(n,r))\) as a subspace of 
\(\mathbf{S}_0(n,r)\) is not available. As in the case of $N_\lambda$, 
it would be nice to have a combinatorial basis for 
\(\operatorname{rad}(\mathbf{S}_0(n,r))\).

\item 
If $\ell(\alpha) > n$, 
then $G_{n,r}(R_\alpha)$ and $N_\alpha$
are no longer projective.  
It would be nice to describe their projective covers and injective hulls.
In particular, by \cref{polynomial modules are Schur modules}, the socle of $G_{n,r}(R_\alpha)$ is mapped to that of $N_\alpha$
under the functor $\Psi_{n,r}$.

\medskip
\item 
The symmetries of the Cartan matrix of $\mathbf{H}_0(r)$ in \cref{symmetries of H cartan}~(1) arise from applying (anti-)involution twists. 
Hence, it is natural to expect that an analogous phenomenon may occur for the Cartan matrix of $\mathbf{S}_0(n,r)$.
\end{enumerate}

\medskip

\noindent {\bf Funding statement.}
The first author was supported by Basic Science Research Program through the National Research Foundation of Korea(NRF) funded by the Ministry of Education(No. 2024-00451844).
The second author was supported by NRF grant funded by the Korea government(MSIT) (No. RS-2024-00342349).
\medskip

\bibliographystyle{abbrv}
\bibliography{references}

\end{document}